\theoremstyle{definition}
\newtheorem{defin}{Definition}
\newtheorem{example}[defin]{Example}
\theoremstyle{plain}
\newtheorem{theorem}[defin]{Theorem}
\newtheorem{prop}[defin]{Proposition}
\newtheorem{lemma}[defin]{Lemma}
\newtheorem{construction}[defin]{Construction}
\theoremstyle{remark}
\newtheorem{remark}[defin]{Remark}
\numberwithin{defin}{section}
\numberwithin{equation}{section}
\newcommand{\OO}{\mathcal{O}}
\newcommand{\GG}{\mathds{G}}
\newcommand\Gm{\GG_\mathrm{m}}
\newcommand\GmKbar{\GG_{\mathrm{m},\Kbar}}
\newcommand\GmZ{\GG_{\mathrm{m},\ZZ}}
\newcommand\GmO{\GG_{\mathrm{m},\OO_K}}
\newcommand{\AAA}{\mathds{A}} 
\newcommand{\xx}{\mathbf{x}}
\DeclareMathOperator{\Pic}{Pic}
\DeclareMathOperator{\Spec}{Spec}
\DeclareMathOperator{\vol}{vol}
\DeclareMathOperator{\Hom}{Hom}
\DeclareMathOperator{\Gal}{Gal}
\DeclareMathOperator{\Res}{Res}
\DeclareMathOperator{\Tr}{Tr}
\DeclareMathOperator{\Fr}{Fr}
\newcommand{\Aone}{{\mathbf A}_1}
\newcommand{\Athree}{{\mathbf A}_3}
\newcommand{\Dfour}{{\mathbf D}_4}
\newcommand{\tS}{{\widetilde S}}
\newcommand{\tSS}{\widetilde{\mathcal{S}}}
\newcommand{\tX}{{\widetilde X}}
\newcommand\dd{\,\mathrm{d}}
\DeclareMathOperator{\rk}{rk}
\newcommand\Kbar{{\overline{K}}}
\newcommand{\ex}[1]{*+<5pt>[o][F]{#1}}
\newcommand\vv{\mathbf{v}}
\newcommand\ZZp{\ZZ_{>0}}
\newcommand\ZZnn{\ZZ_{\ge 0}}
\newcommand\RRnn{\RR_{\ge 0}}
\newcommand\RRnz{\RR_{\ne 0}}
\newcommand{\congr}[3]{{#1} \equiv {#2} \pmod{#3}}
\newcommand{\congrfr}[3]{{#1} \equiv_{#3} {#2}}
\DeclareMathOperator\supp{supp}
\newcommand\floor[1]{\left\lfloor{#1}\right\rfloor}
\newcommand\ceil[1]{\left\lceil{#1}\right\rceil}
\newcommand\sums[1]{\sum_{\substack{#1}}}
\newcommand\prods[1]{\prod_{\substack{#1}}}
\newcommand\ints[1]{\int_{\substack{#1}}}
\newcommand\legendre[2]{\left(\frac{#1}{#2}\right)}
\newcommand\hilbert[3]{\legendre{#1,#2}{#3}}
\newcommand\bigwhere[2]{\left\{#1\ \left|\ \begin{aligned}#2\end{aligned}\right.\right\}}
\newcommand{\mon}{A}
\newcommand{\Is}{\mathscr{I}}
\newcommand\ab{\underline{a}}
\newcommand\cfrb{{\underline{\mathfrak{c}}}}
\newcommand{\cfr}{\mathfrak{c}}
\newcommand\dfrb{{\underline{\mathfrak{d}}}}
\newcommand\dfr{{\mathfrak{d}}}
\newcommand\bfr{{\mathfrak{b}}}
\newcommand\Bfr{{\mathfrak{B}}}
\newcommand\Afr{{\mathfrak{A}}}
\newcommand\afr{{\mathfrak{a}}}
\newcommand\ffr{{\mathfrak{f}}}
\newcommand\afrb{\underline{\afr}}
\newcommand\p{{\mathfrak{p}}}
\newcommand\gfr{{\mathfrak{g}}}
\newcommand\qfr{{\mathfrak{q}}}
\newcommand{\CC}{\mathds{C}}
\newcommand{\QQ}{\mathds{Q}}
\newcommand{\ZZ}{\mathds{Z}}
\newcommand{\RR}{\mathds{R}}
\newcommand{\Cs}{\mathscr{C}}
\newcommand{\Gs}{\mathscr{G}}
\newcommand{\PP}{\mathds{P}}
\newcommand{\Fs}{\mathscr{F}}
\newcommand{\As}{\mathscr{A}}
\newcommand\FF{\mathcal{F}}
\newcommand\FFF{\mathds{F}}
\newcommand\tN{\tilde{N}}
\newcommand\N{\mathfrak{N}}
\DeclareMathOperator{\sgn}{sgn}
\newcommand\Y{\mathcal{Y}}
\newcommand\X{\mathcal{X}}
\newcommand\T{\mathcal{T}}
\newcommand{\kbar}{\overline k}
\newcommand{\Xbar}{\overline X}
\newcommand{\Rbar}{\overline R}
\newcommand{\Ybar}{\overline Y}
\newcommand{\mt}{t}
\newcommand{\m}{m}
\newcommand{\us}{\underline u}
\newcommand{\coord}{a}
\DeclareMathOperator{\identity}{id}
\begin{document}

\title{The split torsor method for Manin's conjecture}

\author{Ulrich Derenthal} 

\address{Leibniz Universit\"at Hannover, Institut f\"ur Algebra, Zahlentheorie und Diskrete
  Mathematik, Welfengarten 1, 30167 Hannover, Germany}
\email{derenthal@math.uni-hannover.de}

\author{Marta Pieropan} 

\address{EPFL SB MATH CAG, B\^at. MA, Station 8, 1015 Lausanne, Switzerland}
\email{marta.pieropan@epfl.ch}

\date{May 5, 2020}

\subjclass[2010]{11D45 (11G35, 14G05)}
%11D45: Counting solutions of Diophantine equations
%11G35: Varieties over global fields
%14G05: Rational points

\setcounter{tocdepth}{1}

\begin{abstract}
  We introduce the split torsor method to count rational points of bounded
  height on Fano varieties.  As an application, we prove
  Manin's conjecture for all nonsplit quartic del Pezzo surfaces of type
  $\Athree+\Aone$ over arbitrary number fields. The counting problem on the
  split torsor is solved in the framework of o-minimal structures.
\end{abstract}

\maketitle

\tableofcontents

\section{Introduction}

\subsection{Manin's conjecture}

Manin's conjecture makes a precise prediction for the number of rational
points of bounded anticanonical height on (possibly singular) Fano varieties
over number fields \cite{MR89m:11060,MR1340296,MR1620682,MR2019019}. For
simplicity, we state it for an anticanonically embedded del Pezzo surface
$S \subset \PP^n_K$ of degree $\ge 3$ with at most
$\mathbf{ADE}$-singularities over a number field $K$: Let $U$ be the
complement of the lines on $S$, and let $H$ be the height function on $S(K)$
induced by the exponential Weil height on $\PP^n(K)$.  Then we expect that
\begin{equation*}
  N_{U,H}(B) \coloneqq |\{\xx \in U(K) : H(\xx) \le B\}| = c_{S,H} B(\log B)^{\rho(\tS)-1}(1+o(1))
\end{equation*}
as $B \to \infty$, where $\rho(\tS)$ is the Picard number of
a minimal desingularization $\tS$ of $S$ over $K$, and $c_{S,H}$ is a positive constant that 
can be written as an explicit prescribed 
product of invariants of $K$ and $S$, including local densities on $\tS$.

Two major approaches to Manin's conjecture are harmonic analysis on adelic
points and the universal torsor method.  The \emph{harmonic analysis approach}
can only be applied to equivariant compactifications of algebraic groups,
which include just a few types of del Pezzo surfaces, mostly of high degree;
see \cite{MR2753646,MR3333982} for their classification. For these, it can
give very general results, covering all varieties in a given class over
arbitrary number fields; see \cite{MR89m:11060,MR1620682,MR1906155}, for
example.  The \emph{universal torsor method} can in principle be applied to
all Fano varieties, but in each instance so far, it treats only a single
example or a smaller family of varieties, with new difficulties arising in
every case.  It has been applied mostly to split varieties (that is, varieties
for which the Picard group and the geometric Picard group coincide; for del
Pezzo surfaces of degree $\le 7$, this is equivalent to the lines being
defined over the ground field) over $\QQ$.  The universal torsor method over
arbitrary number fields has been developed in \cite{MR3552013}.  Due to the
possible presence of nonsplit tori, it does not seem to be the right way to
treat nonsplit varieties (that is, varieties with a nontrivial Galois action
on their geometric Picard group), as we explain below.

Here, we propose a new variation, the \emph{split torsor method} for
nonsplit varieties, 
and we prove Manin's conjecture for all the nonsplit forms of the variety 
considered in \cite{MR3552013} over arbitrary number fields.

\subsection{Torsors}

Let $X$ be a projective variety over a number field $K$ with algebraic closure
$\Kbar$.  Colliot-Th\'el\`ene and Sansuc studied torsors $\pi\colon Y \to X$
under (quasi)tori $T$ over $X$ \cite{MR899402}. Such a torsor has a
\emph{type}, namely the homomorphism of $\Gal(\Kbar/K)$-modules
\begin{equation*}
  \widehat T\coloneqq\Hom_{\Gal(\Kbar/K)}(T_\Kbar,\GmKbar) \to \Pic(X_\Kbar)
\end{equation*}
that sends a character $\chi \colon T_\Kbar \to \GmKbar$ to the class of the
extension $\chi_*Y$. If $\Pic(X_\Kbar)$ is free and finitely generated, then
one may consider torsors under the N\'eron--Severi torus $T$ with
$\widehat T = \Pic(X_\Kbar)$ whose type is the identity. These are called
\emph{universal torsors} and are particularly useful, for example, to study
the Hasse principle and weak approximation on geometrically rational
varieties.

\subsection{Universal torsor method}
The basic idea of the universal torsor method for Manin's conjecture  
consists in  constructing a
model $\T$ of the N\'eron--Severi torus $T$ over 
 the ring of integers $\OO_K$ of $K$ and a torsor
$\pi \colon \Y \to \X$ under $\T$ that is an $\OO_K$-model of a universal torsor
$\pi \colon Y \to X$, and using the induced map
\begin{equation}\label{eq:torsor_points}
  \pi \colon \Y(\OO_K) \to \X(\OO_K) \subseteq X(K)
\end{equation}
to parameterize rational points on $X$ by the $\T(\OO_K)$-orbits on
$\Y(\OO_K)$.  After lifting the height function, one can use techniques of
analytic number theory to count these orbits of bounded height.

If $X$ is split and $K=\QQ$, the map \eqref{eq:torsor_points} is surjective and has finite fibers.

If $X$ is nonsplit, the map \eqref{eq:torsor_points} does not need to be
surjective even if $K=\QQ$, see for example \cite[\S4]{MR2874644}.
Nonetheless, torsor parameterizations have been used to verify Manin's
conjecture for some nonsplit varieties over $\QQ$ that split over
$\QQ(\sqrt{-1})$.  See \cite{MR2099200} for a smooth quintic del Pezzo
surface, \cite{MR2838351} for a smooth quartic del Pezzo surface,
\cite{MR2373960} for a quartic del Pezzo surface of type $\Dfour$ and
\cite{MR2874644,MR3011504,MR3103132,MR3517531} for quartic del Pezzo surfaces
of type $2\Aone$ (Ch\^atelet surfaces). For example in
\cite[Corollary~5.25]{MR2874644}, several (but finitely many) twists of a
universal torsor are used to parameterize the rational points on the
underlying variety. For all such varieties, the N\'eron--Severi torus $T$ is
not split, but $\T(\ZZ)$ is still finite.

Finiteness of $\T(\ZZ)$ also holds for varieties over $\QQ$ that split over
imaginary quadratic fields $\QQ(\sqrt{a})$, but here one may have to deal with
bad reduction at primes dividing $a$; apparently no examples with $a<-1$ have
been worked out.

Already in the case of nonsplit varieties over $\QQ$ that split over a real
quadratic field $\QQ(\sqrt{a})$, the set $\T(\ZZ)$ is infinite. A typical
example is $\T \cong \T_a \times \GmZ^{\rho-1}$, where $\T_a$ is a model of
the norm-$1$-torus for $\QQ(\sqrt{a})/\QQ$ with
\begin{equation*}
  \T_a(\ZZ) = \{(z_1,z_2)\in \ZZ^2 : z_1^2-az_2^2=1\}.
\end{equation*}
This holds for example for the surfaces treated in the present paper if one
takes $a>0$ in Theorem \ref{thm:number_fields} below.
  
The main difficulty in the application of the universal torsor method for
nonsplit varieties that split over real quadratic fields would be the
construction of a fundamental domain for the action of $\T(\ZZ)$ on a model
$\Y \to \X$ of a universal torsor $Y \to X$. An additional difficulty could be
the need for twists of these torsors to cover $X(\QQ)$. Apparently this
strategy has never been implemented.
  
\subsection{Split torsor method}
For nonsplit varieties, we propose to work with the split torsor instead of
universal torsors. The \emph{split torsor} $\pi' \colon Y' \to X$ is a torsor
under the split torus $T'$ with character group $\widehat T' = \Pic(X)$ whose
type is the natural map $\Pic(X) \to \Pic(X_\Kbar)$; it is unique up to
isomorphism and the map $Y'(K)\to X(K)$ is surjective.
   
To make the split torsor $\pi' \colon Y' \to X$ explicit, we have developed a
theory and constructions of Cox rings of arbitrary type in
\cite{arXiv:1408.5358}. We can construct an $\OO_K$-model
$\pi'\colon\Y' \to \X$ using the theory that we develop in
Section~\ref{sec:integral_models}.  Assuming $K=\QQ$ and
$\Pic(X) \cong \ZZ^{\rho'}$, the map
\begin{equation}\label{eq:splittorsor}
  \pi' \colon \Y'(\OO_K) \to \X(\OO_K).
\end{equation}
is surjective, and it is a $(2^{\rho'}:1)$-map.
  
In \cite{MR2373960}, a parameterization of the rational points on a quartic
del Pezzo surface of type $\Dfour$ over $\QQ$ that splits over $\QQ(i)$ is
produced by elementary manipulations of the defining equations, without a
geometric interpretation. Using the techniques from \cite{arXiv:1408.5358} and
Section~\ref{sec:integral_models}, one can show that a model of the split
torsor gives the same parameterization; see
\cite[Example~5.1]{arXiv:1408.5358}.

The split torsor method not only avoids the difficulties due to the 
  splitting fields of quasitori, but it produces a parameterization
  that enables the application  in the nonsplit setting of known counting 
  techniques typically used on split varieties.

The difficulty of the resulting counting problem depends on the variety in
question. We expect that the primes of $\OO_K$ that are ramified in the
splitting field of $X$ will typically be places of bad reduction for the model $\X$ and
cause considerable additional complications, as it happens for the 
varieties considered in this paper. In our case, the first new difficulty
appears at the M\"obius inversion step, introducing an extra 
summation over congruence classes. This is eventually encoded in a fairly complicated 
arithmetic function that we study via related Hecke $L$-functions.

If $K$ is an arbitrary number field, the maps \eqref{eq:torsor_points} and
\eqref{eq:splittorsor} do not need to be surjective nor have finite
fibers. For split varieties, both problems have been addressed and solved in
\cite{MR3552013} via twists of torsors and fundamental domains for the action
of $\T(\OO_K)$. We show that a similar approach works also for nonsplit
varieties.

\subsection{Our application}

We investigate Manin's conjecture for quartic del Pezzo surfaces with
singularities of type $\Athree+\Aone$ over an arbitrary number field $K$.  In
Section~\ref{sec:dPsurfaces}, we show that each such surface is isomorphic
over $K$ to precisely one of the surfaces $S_a \subset \PP^4_K$ defined by
\begin{equation}\label{eq:equationsSa}
  S_a \colon x_0x_4+x_1^2-ax_3^2=x_2x_3-x_4^2 = 0,
\end{equation}
where $a$ runs through a set of representatives of $K^\times/K^{\times 2}$.

We assume that  $a$ is an algebraic integer. We could further assume that $a$ is squarefree (that
is, there is no $c \in \OO_K \smallsetminus \OO_K^\times$ with $c^2 \mid a$), but
this assumption would not simplify our work since there still could be prime
ideals $\p$ in $\OO_K$ with $\p^2 \mid a$.

The two singularities on $S_a$ are $(1:0:0:0:0)$ of type $\Athree$ and
$(0:0:1:0:0)$ of type $\Aone$. Let $\sqrt{a} \in \Kbar$ be a square root of
$a$.  The surface $S_a$ contains three lines: $\{x_1=x_3=x_4=0\}$ defined over
$K$, and $\{x_1+\sqrt{a}x_3=x_2=x_4=0\}$ and $\{x_1-\sqrt{a}x_3=x_2=x_4=0\}$
defined over $K(\sqrt{a})$.

If $a$ is a square, the surface $S_a \cong S_1$ is split over $K$. Manin's
conjecture for $S_1$ is proved in \cite[\S 8]{MR2520770} over $\QQ$, in
\cite[\S 2]{MR3181632} over imaginary quadratic fields and in \cite{MR3552013}
over arbitrary number fields.  If $a$ is not a square, the surface $S_a$ is
nonsplit over $K$ and splits over $K(\sqrt a)$; our main result is a proof of
Manin's conjecture for all these surfaces over an arbitrary number field $K$.

The following arithmetic function plays an important role in our analysis and
appears in the nonarchimedean local densities in the constant $c_{S_a,H}$.
For every ideal $\qfr$ of $K$, let
\begin{equation}\label{eq:eta}
  \eta(\qfr;a)\coloneqq\left|\bigwhere{\rho \pmod{\qfr\gfr^{-1}\gfr'}}{
      &\rho\OO_K+\qfr\gfr^{-1}\gfr'=\gfr',\ \congr{\rho^2}{a}{\qfr},\\
      &\gfr=\qfr+a\OO_K,
      \ \gfr'=\prod_\p \p^{\ceil{v_\p(\gfr)/2}}}\right|.
\end{equation}
As we see in Section \ref{sec:arithmetic_functions},
it is closely related to the Legendre symbol \cite[\S V.3]{MR1697859}, which
is defined for prime ideals $\p \nmid 2a$ as
\begin{equation}\label{eq:legendre}
  \legendre{a}{\p}=
  \begin{cases}
    1, & \text{$a$ is a quadratic residue mod $\p$},\\
    -1, & \text{$a$ is a quadratic nonresidue mod $\p$}.
  \end{cases}
\end{equation}
We denote by $\pi_\p$ a uniformizer for the completion $K_\p$ and observe that
the symbol
\begin{equation*}
  \legendre{a/\pi_\p^{v_\p(a)}}{\p}
\end{equation*}
is well defined when $v_\p(a)$ is even and $\p \nmid 2$, as $a/\pi_\p^{v_\p(a)}$ is a unit in $K_\p$.  We refer to
Section~\ref{sec:notation} for the standard notation that we use in the
description of the leading constant $c_{S_a,H}$ in the theorem below.

\begin{theorem}\label{thm:number_fields}
  Let $K$ be a number field and let $H$ be the exponential Weil height
  on $\PP^4_K$. Assume that $a \in \OO_K$ is not a square. Let
  $S_a \subset \PP^4_K$ be the singular quartic del Pezzo surface
  defined by (\ref{eq:equationsSa}).
  Let $U$ be the complement of the lines on $S_a$.
  Let $\epsilon>0$. For $B \to \infty$, we have
  \begin{equation*}
    N_{U,H}(B) = c_{S_a,H} B(\log B)^4 + O_{\epsilon,a,K}(B(\log B)^{4-\frac 1 d+\epsilon}).
  \end{equation*}
  Here,
  \begin{equation*}
    c_{S_a,H} \coloneqq \alpha_{S_a} \left(\frac{2^{r_1}(2\pi)^{r_2}h_KR_K}{|\mu_K|\cdot \sqrt{|\Delta_K|}}\right)^5 \frac{1}{|\Delta_K|}
    \left(\prod_{v \in \Omega_K} \omega_v(S_a)\right),
  \end{equation*}
  where
  \begin{align*}
    \alpha_{S_a}
    &\coloneqq \frac{1}{1728},\\
    \omega_v(S_a)
    &\coloneqq
      \begin{cases}
       \left(1-\frac{1}{\N\p}\right)^5\left(1+\frac{5+r_a(\p)}{\N\p}+\frac{1}{\N\p^2}\right),
      &\text{$v=\p$ finite,} \\     
       \frac{3}{2} \iiint_{\{N_v(y_5,y_6,y_7) \le 1\}\subset\RR^3} \dd y_5\,\dd y_6\,\dd y_7,
        &\text{$v$ real,}\\
        \frac{12}{\pi} \iiint_{\{N_v(y_5,y_6,y_7) \le 1\}\subset\CC^3} \dd y_5\,\dd y_6\,\dd y_7,
        &\text{$v$ complex,}
    \end{cases}
  \end{align*}
  with
  \begin{align*}
    r_a(\p)&\coloneqq
    \begin{cases}
      \legendre{a}{\p}, &\p \nmid 2a,\\
      1-\frac{1}{\N\p^{\floor{v_\p(a)/2}}}\left(1+\frac{1}{\N\p}\right),
      &\text{$v_\p(a)$ odd,}\\
      1-\frac{1}{\N\p^{v_\p(a)/2}}\left(1-\legendre{a/\pi_\p^{v_\p(a)}}{\p}\right), &
      \p \nmid 2,\ \text{$v_\p(a)$ even,}\\
      1-\frac{1}{\N\p^{v_\p(a)/2}} (2-s_a(\p)),
      &\p \mid 2,\ \text{$v_\p(a)$ even,}
    \end{cases}\\
  \end{align*}
  \begin{align*}s_a(\p)&\coloneqq\left(1-\frac{1}{\N\p}\right)\sum_{k=0}^{v_\p(4)-1}
       \frac{\eta(\p^{v_\p(a)+k+1};a)}{\N\p^k}+\frac{\eta(\p^{v_\p(4a)+1};a)}{\N\p^{v_\p(4)}},
  \end{align*}
  and, for $v \mid \infty$ and $(y_5,y_6,y_7) \in K_v^3$,
  \begin{equation*}
    N_v(y_5,y_6,y_7)\coloneqq\max\{|y_6(ay_6^2-y_7^2)|_v,|y_5y_6y_7|_v,|y_5^3|_v,|y_5y_6^2|_v,|y_5^2y_6|_v\}.
  \end{equation*}
  This agrees with the conjectures of Manin and Peyre \cite[Formule empirique 5.1]{MR2019019}.
\end{theorem}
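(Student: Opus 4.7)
The plan is to carry out the split torsor method on the minimal desingularization $\tS_a$. First, I would observe that the geometric Picard group has rank $6$ while the pair of conjugate lines $\{x_1\pm\sqrt{a}x_3=x_2=x_4=0\}$ forms a single Galois orbit, so $\rho(\tS_a)=5$, matching the exponent $(\log B)^4$ in the main term. Using the techniques of \cite{arXiv:1408.5358}, I would present the Cox ring of identity type of $\tS_a$ by generators indexed by the Galois orbits of the eight negative curves (the four exceptional curves over the $\Athree$ singularity, the one over the $\Aone$, the $K$-rational line, and a single ``norm'' generator for the Galois orbit of the two conjugate lines) together with explicit torsor equations. Combined with the theory of Section~\ref{sec:integral_models}, this gives an $\OO_K$-model $\pi'\colon\Y'\to\X$ and a surjective parameterization of $U(K)$ by $\T(\OO_K)$-orbits on the integral torsor points satisfying the natural coprimality conditions between coordinates associated with non-incident negative curves.

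Next, using a fundamental domain for the action of $\T(\OO_K)$ as in \cite{MR3552013}, I would rewrite $N_{U,H}(B)$ as a finite sum of expressions of the form
\begin{equation*}
  \sum_{\afrb}\sum_{\us\in\mathcal{F}(\afrb)}\mathbf{1}[H'(\us)\le B],
\end{equation*}
where $\afrb$ runs over tuples of ideals of $\OO_K$, $\mathcal{F}(\afrb)$ is a fundamental domain in a lattice attached to $\afrb$, and $H'$ is the lift of $H$ to the torsor. Solving a torsor equation for one distinguished coordinate and performing M\"obius inversion on each of the finitely many coprimality conditions reduces the problem to a weighted sum over ideal tuples in which the weight counts residues $\rho$ modulo an auxiliary ideal $\qfr$ with $\rho^2\equiv a\pmod{\qfr}$; this count is precisely the arithmetic function $\eta(\qfr;a)$ of \eqref{eq:eta}, which encodes the nonsplit behavior.

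To extract the asymptotic, I would estimate the number of lattice points inside the region $\{H'(\us)\le B\}$ in each twisted lattice using a number-field version of Davenport's lemma with good control on the boundary (the o-minimality of the defining conditions makes these estimates uniform). After summing the main terms against the M\"obius-inverted ideals, the main contribution involves a Dirichlet series $\sum_\qfr\eta(\qfr;a)\N\qfr^{-s}$ whose local factor at $\p\nmid 2a$ equals $(1-\N\p^{-s})^{-1}(1-\legendre{a}{\p}\N\p^{-s})^{-1}$; up to finitely many bad Euler factors, this factors as $\zeta_K(s)\cdot L(s,\chi_a)$, where $\chi_a$ is the quadratic Hecke character attached to $K(\sqrt{a})/K$. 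Taking the residue at $s=1$ and combining with the archimedean volumes produces the leading constant, which must then be matched term by term against Peyre's formula: $\alpha_{S_a}=1/1728$ is the volume of the anticanonical polytope in $\Pic(\tS_a)_\RR$ (standard for type $\Athree+\Aone$), the archimedean $\omega_v(S_a)$ arise from the integrals over $\{N_v\le 1\}$ in the theorem, and the nonarchimedean $\omega_v(S_a)$ reassemble from the Euler factors into the formulas involving $r_a(\p)$.

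The main obstacle is the analysis at the finitely many bad primes $\p\mid 2a$, where $\eta(\qfr;a)$ is no longer governed by the Legendre symbol and must be computed directly on prime powers $\p^k$ up to $k=v_\p(4a)+1$; this produces the auxiliary function $s_a(\p)$ and the four cases in the formula for $r_a(\p)$ (in particular, the dyadic case with even $v_\p(a)$ is subtle, since solving $\rho^2\equiv a$ modulo higher powers of $2$ requires a Hensel-type analysis whose thresholds depend on $v_\p(2)$). Combining this bad-prime analysis correctly with the global computation, and controlling the resulting error contributions to match the bound $O_{\epsilon,a,K}(B(\log B)^{4-1/d+\epsilon})$, concentrates the technical heart of the argument.
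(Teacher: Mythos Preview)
Your outline is essentially the paper's own proof: split torsor parameterization via the Cox ring of type $\Pic(\tS_a)\hookrightarrow\Pic((\tS_a)_{\Kbar})$, M\"obius inversion producing the $\rho^2\equiv a$ congruence and hence $\eta(\cdot;a)$, o-minimal lattice-point counting \`a la Barroero--Widmer for the first summation, and then Perron's formula together with the factorization through $\zeta_K(s)L(s,\chi_a)$ for the remaining ideal sums, followed by the Peyre-constant check. One small correction: the split Cox ring is \emph{not} generated by Galois-orbit products of the seven negative curves alone (and $\Athree$ contributes three exceptional curves, not four); two of the eight generators $\xi_7,\xi_8$ come from Galois-invariant combinations involving auxiliary effective curves $D_8,D_9$ beyond the negative ones, which is what produces the single torsor equation $\xi_1\xi_8+\xi_7^2-a\xi_2^4\xi_3^2\xi_4^6\xi_6^2=0$.
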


\begin{remark}
  In the case $K=\QQ$, we may assume that $a\ne 1$ is a squarefree integer. Then
  \begin{equation*}
    \omega_p(S_a)=\left(1-\frac 1 p\right)^5\cdot
    \begin{cases}
      1+\frac{5}{p}, & p \mid a,\\
      1+\frac{6}{p}+\frac{1}{p^2}, & p\ne 2,\ \legendre{a}{p}=1,\\
      1+\frac{4}{p}+\frac{1}{p^2}, & p\ne 2,\ \legendre{a}{p}=-1,\\
      \frac{17}{4}, & p=2,\ \congr{a}{1}{8},\\
      \frac{15}{4}, & p=2,\ \congr{a}{5}{8},\\
      \frac{7}{2}, & p=2,\ \congr{a}{3,7}{8}.
    \end{cases}
  \end{equation*}
\end{remark}

Considering the $\p$-adic densities in our leading constant, it is not
surprising that the primes $\p \mid 2a$, and in particular those with even
$v_\p(a)$, cause considerable technical difficulties. These appear at many
places of the proof.

\subsection{Strategy of proof}

The proof implements the split torsor method that we have proposed
above. Let $\tS_a$ be the minimal desingularization of $S_a$. Here, the type of the torsor is
\begin{equation*}
  \ZZ^5 \cong \Pic(\tS_a) \hookrightarrow \Pic((\tS_a)_\Kbar) \cong \ZZ^6.
\end{equation*}

In Section~\ref{sec:surfaces_torsors}, we construct a split torsor
$Y' \to \tS_a$ over $K$ and an $\OO_K$-model $\Y' \to \tSS_a$ using the
techniques developed in Section~\ref{sec:integral_models} and in
\cite{arXiv:1408.5358}.  In Section~\ref{sec:parameterization_concrete}, we
use $\Y' \to \tSS_a$ and its $h_K^5$ twists (where $h_K$ is the class number
of $K$) to produce an explicit parameterization of $U(K)$ in terms of
coordinates $(a_1,\dots,a_8)$ satisfying the torsor equation
\begin{equation}\label{eq:torsor-intro}
  a_1a_8+a_7^2-aa_2^4a_3^2a_4^6a_6^2=0,
\end{equation}
coprimality conditions and height conditions up to 
the action of $\T'(\OO_K)\cong (\OO_K^\times)^5$.

As in \cite{MR3552013}, the five
degrees of freedom coming from this action allow us to choose a
fundamental domain  such that
all conjugates of $a_1,\dots,a_4$ and of the height function have
roughly the same size; see Section~\ref{sec:fundamental_domain}. 
For fixed $a_1,\dots,a_4$,
the volume of the fundamental domain 
cut by the height conditions for the remaining variables  is roughly
$\prod_{v \mid \infty} \omega_v(S_a)\cdot B/|N(a_2a_3a_4)|$, where $N$ denotes the norm of $K/\QQ$.  In
Section~\ref{sec:moebius}, we remove the coprimality conditions on
$a_5,a_6,a_7$ by M\"obius inversions and interpret the torsor equation
(\ref{eq:torsor-intro}) as a congruence modulo $a_1$, eliminating
$a_8$. This is much more subtle than in the split case and isolates a
summation over $\rho$ as in (\ref{eq:eta}).

For fixed $a_1,\dots,a_4,B$, the first summation in
Section~\ref{sec:first_summation} estimates the number of $a_5,a_6,a_7$
satisfying our congruence. It has many steps. In
Section~\ref{sec:summation_lemma}, we prove a general summation lemma that
will be applied to all error terms in the first summation. In
Section~\ref{sec:small_conjugates}, we remove the ``spikes'' in our counting
domain where $a_6$ or $a_7$ have small conjugates.  As in \cite{MR3552013}, we
show that the counting problem for $(a_5,a_6,a_7)$ consists in estimating the
number of lattice points lying in a set definable in Wilkie's o-minimal
structure $\RR_{\exp}$ \cite{MR1398816}.  Therefore, we can apply \cite{MR3264671}; the error
terms depend on the volumes of the projections of this set to coordinate
hyperplanes, which we estimate in Section~\ref{sec:volumes_of_projections}.
In Section~\ref{sec:first_summation_completion}, we estimate the error term
coming from the ``spikes''.  This completes the first summation, which shows
roughly that after turning the summation over $a_1,\dots,a_4$ and the $h_K^5$
twists into a summation over ideals $\afr_1,\dots,\afr_4$, we have
\begin{equation*}
  N_{U,H}(B) \sim \frac{\rho_K}{3|\Delta_K|}\cdot
  \prod_{v \mid \infty} \omega_v(S_a)\cdot B \cdot \sum_{\afr_1,\dots,\afr_4}
  \frac{\vartheta_1(\afr_1,\dots,\afr_4)}{\N(\afr_1\cdots\afr_4)};
\end{equation*}
see Proposition~\ref{prop:first_summation}. Here, $\vartheta_1$ is an
arithmetic function coming from the coprimality conditions and involving the
function $\eta(\cdot ;a)$ defined in (\ref{eq:eta}).

In the split case, the remaining summations in \cite[\S 12]{MR3552013} are a
straightforward application of the theory developed in \cite[\S 7]{MR3269462}.
In our nonsplit case, we must study averages of arithmetic functions involving
the Legendre symbol~(\ref{eq:legendre}); this is done in
Section~\ref{sec:averages} using the corresponding Hecke $L$-function, which
we discuss in Section~\ref{sec:arithmetic_functions}, combined with Perron's
formula. In Section~\ref{sec:remaining_summations_completion}, we estimate the
sum over $\afr_1,\dots,\afr_4$ as
\begin{equation*}
  3\rho_K^4 \cdot \alpha_{S_a} \cdot (\log B)^4 \cdot \prod_\p
  \omega_\p(S_a),
\end{equation*}
which completes the proof of the asymptotic formula.

In Section~\ref{sec:constant}, we show that our leading constant agrees with
Peyre's prediction in \cite{MR2019019}. We study the convergence factors in
Section~\ref{sec:convergence_factors} and compute the local densities in
Section~\ref{sec:local_densities}. Again, the most difficult places are
$\p \mid 2a$ with even $v_\p(a)$.

\subsection{Notation}\label{sec:notation}

Additionally to the notation introduced in the course of this introduction, we
will use the following: For the number field $K$, the residue of the Dedekind
zeta function at $s=1$ is
\begin{equation}\label{eq:rho_K_def}
  \rho_K \coloneqq \frac{2^{r_1}(2\pi)^{r_2} R_K h_K}{|\mu_K|\cdot |\Delta_K|^{1/2}},
\end{equation}
where $r_1$ is its number of real embeddings, $r_2$ its number of complex
embeddings, $R_K$ its regulator, $h_K$ its class number, $\mu_K$ its group of
roots of unity, and $\Delta_K$ its discriminant. Let $d = [K:\QQ]$ be its degree.

Let $\Omega_K$ be its set of places. For $v \in \Omega_K$, let $K_v$ be the
corresponding completion of $K$. Let $\Omega_\infty$ be the set of archimedean
places; we also write $v \mid \infty$ for $v \in \Omega_\infty$.  Let
$\Omega_f$ be the set of nonarchimedean places.  Let $U_K$ be a free part of
the unit group (that is, $\OO_K^\times = U_K \times \mu_K$).
  
Let $\Is_K$ be the monoid of nonzero ideals in $\OO_K$ and let
$\Cs \subset \Is_K$ be a system of representatives of the class group of
$\OO_K$. Let $\N\qfr$ be the absolute norm of a (fractional) ideal $\qfr$, and
let $N(a)=N_{K/\QQ}(a)$ be the norm of $a \in K$ over $\QQ$. The symbol $\p$
always denotes a prime ideal of $\OO_K$.  Let
$\mu_K \colon \Is_K \to \{-1,0,1\}$ be the M\"obius function. For
$\qfr \in \Is_K$, let $\omega_K(\qfr)$ be the number of distinct prime ideals
dividing $\qfr$.

For $v \in \Omega_K$ lying above $w \in \Omega_\QQ$, let
$d_v\coloneqq [K_v : \QQ_w]$ be the local degree, and let
$|\cdot|_v = |N_{K_v/\QQ_w}(\cdot)|_w$ be the $v$-adic absolute value, where
$|\cdot|_w$ is the usual $p$-adic or real absolute value on $\QQ_w$. Also for
$v \in \Omega_K$, let $\sigma_v \colon K \to K_v$ be the $v$-adic embedding,
we use the same symbol for the component-wise map
$\sigma_v \colon K^n \to K_v^n$, and denote by
$\sigma \colon K \to \prod_{v \in \Omega_K} K_v$ the diagonal map. For
$a \in K$, we often write $|a|_v$ for $|\sigma_v(a)|_v$.  The exponential Weil
height of a rational point $\xx = (x_0 : \dots : x_n) \in \PP^n(K)$ is
\begin{equation*}
  H(\xx) \coloneqq \prod_{v \in \Omega_K} \max\{|x_0|_v, \dots, |x_n|_v\}.
\end{equation*}

For each $a\in K$, we fix a square root of $a$ in $\Kbar$ and we denote it by $\sqrt a$. 
  
We call $x \in K$ \emph{defined} (or \emph{invertible}) modulo an ideal $\qfr$
if $v_\p(x) \ge 0$ (or $v_\p(x)=0$) for all $\p \mid \qfr$. If $x,y \in K$ are
defined modulo $\qfr$, the notation $\congrfr{x}{y}{\qfr}$ means that
$v_\p(x-y)\ge v_\p(\qfr)$ for all $\p \mid \qfr$.

When we use Landau's $O$-notation and Vinogradov's $\ll$-notation, we mean
that the corresponding inequalities hold for \emph{all} values in the relevant
range. The implied constants may always depend on $K$ and $a$; additional
dependencies are denoted by subscripts.
  
All volumes of subsets of $\RR^n$ are computed with respect to the usual
Lebesgue measure, unless stated otherwise.

\subsection*{Acknowledgements}

The first author was partly supported by grant DE 1646/4-2 of the Deutsche
Forschungsgemeinschaft. Some of this work was done while he was on sabbatical
leave at the University of Oxford. The second author was partly supported by
grant ES 60/10-1 of the Deutsche Forschungsgemeinschaft. The authors are
grateful to D.~R.~Heath-Brown, E.~Sofos and the anonymous referee for their helpful remarks.

\section{Integral models of split torsors}\label{sec:integral_models}

We extend the construction of integral models of split universal 
torsors developed in \cite[\S3]{MR3552013} to (not necessarily universal) torsors under
split tori; see also \cite[\S3]{thesis_pieropan}.  
We refer to \cite{MR899402} for the theory of torsors
and to \cite{arXiv:1408.5358} for the theory of Cox rings.

\begin{construction}\label{hypot3}
  Let $A$ be a noetherian integral domain.  We fix a separable closure $\kbar$
  of the fraction field of $A$.  Let $\Xbar$ be an integral projective
  $\kbar$-variety with a finitely generated Cox ring $\Rbar$ of type
  $\lambda\colon\ZZ^r\to\Pic(\Xbar)$ such that $\lambda(\ZZ^r)$ contains an ample
  divisor class.  Let $\eta_1,\dots,\eta_N$ be $\ZZ^r$-homogeneous elements of
  $\Rbar$ such that
  \begin{equation*}
    \Rbar=\kbar[\eta_1,\dots,\eta_N]/I
  \end{equation*}
  for a homogeneous ideal $I$ as in \cite[Proposition 4.4]{arXiv:1408.5358}.
  Assume that $I$ is generated by polynomials
  $g_1,\dots,g_s\in A[\eta_1,\dots,\eta_N]$.  Let
  \begin{equation*}
    f_1,\dots,f_\mt\in \kbar[\eta_1,\dots,\eta_N]\smallsetminus \sqrt{I}
  \end{equation*}
  be monic monomials such that the complement $\Ybar$ of the closed subset of
  $\Spec\Rbar$ defined by $f_1,\dots, f_{\mt}$ is a torsor of $\Xbar$ of type
  $\lambda$ (cf.~\cite[Corollary 4.3]{arXiv:1408.5358}).

  Let
  \begin{equation*}
    R\coloneqq A[\eta_1,\dots,\eta_N]/(g_1,\dots,g_s)
  \end{equation*}   
  with the $\ZZ^r$-grading induced by the $\ZZ^r$-grading on $\Rbar$.  Assume
  that $(R;f_1,\dots,f_\mt)$ satisfies the following condition:
  \begin{equation}
    \parbox{0.8\linewidth}{for every $i,j\in\{1,\dots,\mt\}$, there is a homogeneous
      invertible element of $R[f_i^{-1}]$ of degree a 
      multiple of $\deg f_j$.}\label{conditionstar}
  \end{equation}
  Let $Y$ be the complement of the closed subset of $\Spec R$ defined by
  $f_1,\dots,f_\mt$.  For $i\in\{1,\dots,\mt\}$, let
  $U_i\coloneqq \Spec R[f_i^{-1}]$, let $R_i$ be the degree-$0$-part of the
  ring $R[f_i^{-1}]$ and let $V_i\coloneqq \Spec(R_i)$.  Let $X$ be the
  $A$-scheme obtained by gluing $\{V_i\}_{1\leq i\leq \mt}$, and let
  $\pi \colon Y\to X$ be the morphism induced by the inclusions
  $R_i\to R[f_i^{-1}]$ for $i\in\{1,\dots,\mt\}$.
\end{construction}

\begin{remark}\label{rem:model_construction}
  As in \cite[Construction 3.1]{MR3552013}, $Y_{\kbar}\cong \overline Y$,
  $X_{\kbar}\cong\overline X$ and $\pi$ is an $A$-model of the torsor morphism
  $\overline Y\to\overline X$.  The $\ZZ^r$-grading on $R$ induces an action
  of $\GG_{m,A}^r$ on $Y$.  The morphism $\pi$ is surjective and of finite
  presentation, and it gives an $X$-torsor under $\GG_{m,A}^r$ (compatible with
  the structure of $\overline X$-torsor of type $\lambda$ on $\overline Y$) if
  and only if $\pi$ is flat and the morphism of schemes
  $\psi\colon\GG^r_{m,A}\times_{\Spec A} Y \to Y \times_{X}Y$ defined by
  $(\us,\underline{\coord})\mapsto(\us*\underline\coord,\underline\coord)$ is
  an isomorphism.  The same proof as in \cite[Theorem 3.3]{MR3552013} shows
  that $\pi$ is an $X$-torsor under $\GG_{m,A}^r$ if $(R; f_1,\dots,f_\mt)$
  satisfies the following condition:
  \begin{equation}
    \parbox{0.8\linewidth}{the degrees of the homogeneous invertible elements of
      $R[f_i^{-1}]$ generate $\ZZ^r$ for all
      $i\in\{1,\dots,\mt\}$.}\label{picardgeneration}
  \end{equation}
\end{remark}

\begin{remark}\label{udesck}
  If $A$ is a field, then $\pi\colon Y\to X$ is a torsor of type
  $\lambda \colon \ZZ^r\to\Pic(\Xbar)$ (with trivial $\Gal(\kbar/A)$-action on
  $\ZZ^r$) regardless of condition \eqref{picardgeneration} by \emph{fpqc}
  descent.
\end{remark}

\begin{remark}
  Let $k$ be a finite extension of the fraction field of $A$ contained in
  $\overline k$. If $\widetilde X$ is a $k$-variety such that
  ${\widetilde X}_{\overline k}\cong\overline X$ and $\lambda$ factors through
  $\Pic(\widetilde X)\to\Pic(\overline X)$, the Cox ring $\Rbar$ has a natural
  structure of $\Gal(\kbar/k)$-equivariant Cox ring of
  ${\widetilde X}_{\overline k}$ in the sense of \cite[Definition
  3.1]{arXiv:1408.5358}, and we can choose
  $\eta_1,\dots,\eta_N\in \Rbar^{\Gal(\kbar/k)}$. Then $R_{k}$ is a Cox ring
  of $\widetilde X$ of type
  $\lambda\colon \ZZ^r\to\Pic({\widetilde X}_{\overline k})$ and
  $Y_k\to \widetilde X$ is a torsor of $\widetilde X$ of type
  $\lambda\colon \ZZ^r\to\Pic({\widetilde X}_{\overline k})$. In particular,
  $X_k\cong \widetilde X$.
\end{remark}

\begin{remark}\label{rem:same_statements}
  Straightforward generalizations of \cite[Lemma 3.5, Proposition
  3.6]{MR3552013} hold in the setting of Construction \ref{hypot3} with the
  very same statement and the very same proof.
\end{remark}

For the sake of precision we report the results analogous to
\cite[Propositions 3.7, 3.8]{MR3552013}.

\begin{prop}\label{prop:model_properties}
  Assume the setting of Construction \ref{hypot3}.
  \begin{enumerate}[label=(\roman*), ref=(\roman*)]
  \item\label{it:model_projective}
    Let $C_{\kbar}$ and $C_A$ be the ideals of 
    $\kbar[\eta_1,\dots,\eta_N]$ and $A[\eta_1,\dots,\eta_N]$, 
    respectively, generated by $f_1,\dots,f_\mt,g_1,\dots,g_s$.  
    If $f_1,\dots,f_\mt$ have all the same degree $\m\in\ZZ^r$
    such that $\lambda(\m)$ is very ample, and
    \begin{equation*}
      \sqrt{C_{\kbar}}\cap A[\eta_1,\dots,\eta_N]=\sqrt{C_A},
    \end{equation*}
    then $X$ is projective over $A$.
  \item\label{it:model_smooth} Assume that $A$ is a Dedekind domain,  $R$ is an
    integral domain, $\Spec(R)\to\Spec(A)$ has geometrically integral fibers,
    and $\pi$ is flat (the last holds, for example, if
    \eqref{picardgeneration} is satisfied). If the Jacobian matrix
    \begin{equation*}
      \left(\frac{\partial g_i}{\partial\eta_j}
        (\underline{\coord})\right)_{\substack{1\leq i\leq s\\ 1\leq j\leq N}}
    \end{equation*} 
    has rank $N-\dim \overline X-r$ for all
    $\underline{\coord}\in Y(\overline{k(\p)})$ and $\p\in\Spec(A)$, where
    $\overline{k(\p)}$ is an algebraic closure of the residue field $k(\p)$,
    then $X$ is smooth over $A$.
  \end{enumerate}
\end{prop}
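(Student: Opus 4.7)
The proof will follow the blueprint of \cite[Propositions 3.7, 3.8]{MR3552013}, which treat the analogous statements for universal torsors; the generalization to arbitrary type $\lambda$ is essentially bookkeeping, since the Proj construction and the Jacobian criterion both behave well with respect to the extra freedom in $\lambda$.

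For part \ref{it:model_projective}, the plan is to identify $X$ with a relative $\operatorname{Proj}$ over $A$ and then use very ampleness of $\lambda(\m)$ to embed it into a projective space. Since $f_1,\dots,f_\mt$ share the common degree $\m$, the graded $A$-algebra $R^{(\m)}\coloneqq\bigoplus_{n\ge 0}R_{n\m}$ has the property that its homogeneous localization $D_+(f_i)$ equals $V_i=\Spec R_i$; hence $X=\bigcup_i D_+(f_i)$ sits naturally as an open subscheme of $\operatorname{Proj} R^{(\m)}$. The first task will be to show the equality $X=\operatorname{Proj} R^{(\m)}$, i.e.\ to rule out homogeneous primes of $R^{(\m)}$ not containing the irrelevant ideal that avoid all $f_i$. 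The hypothesis $\sqrt{C_{\kbar}}\cap A[\eta_1,\dots,\eta_N]=\sqrt{C_A}$ guarantees that no such primes appear in the $A$-model beyond those already present over $\kbar$, where by construction of $\Ybar$ the locus $\Spec\Rbar\smallsetminus\Ybar$ is cut out by the $f_i$. Very ampleness of $\lambda(\m)$ on $\Xbar$ then lets one choose a finite degree-$\m$ generating set of $\Rbar$ and check, using once more the radical hypothesis to pass from $\kbar$ to $A$, that the morphism to the corresponding projective space over $A$ determined by this generating set is a closed immersion.

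For part \ref{it:model_smooth}, the plan is to prove smoothness of $Y/A$ by the relative Jacobian criterion and then to descend smoothness to $X$ along the torsor $\pi$. Flatness of $Y/A$ is given. Geometric integrality of the fibres of $\Spec R\to\Spec A$ fixes the dimension of $Y_{k(\p)}$ to be $\dim\Xbar+r$, so the Jacobian rank condition $N-\dim\Xbar-r$ at all $\overline{k(\p)}$-points of $Y$ forces $Y_{k(\p)}/k(\p)$ to be smooth at every point; combined with $A$-flatness of $Y$, this yields smoothness of $Y/A$. The flatness assumption on $\pi$ ensures, via Remark \ref{rem:model_construction} (where $\psi$ is an isomorphism after base change to $\kbar$ because $\Ybar$ is already a torsor, and the assertion spreads out by faithful flatness), that $\pi$ is a $\GG_{m,A}^r$-torsor; in particular $\pi$ is smooth and faithfully flat. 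Since smoothness is fpqc-local on the target, smoothness of $Y/A$ descends to smoothness of $X/A$.

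The main technical obstacle in \ref{it:model_projective} is the careful verification, via the radical hypothesis, that $\operatorname{Proj} R^{(\m)}$ is exactly covered by the $D_+(f_i)$, i.e.\ that no extra irrelevant-type primes arise in fibres over primes of $A$; once this is in hand, projectivity follows routinely from finite generation of $R^{(\m)}$ and very ampleness of $\lambda(\m)$. In \ref{it:model_smooth}, the subtle point is matching the Jacobian rank with the correct fibre dimension at every point of every fibre, so that the Jacobian criterion delivers pointwise smoothness rather than smoothness only at a generic stratum; this is precisely where the geometric integrality of the fibres of $\Spec R\to\Spec A$ is used.
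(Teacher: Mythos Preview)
Your proposal follows the same route as the paper, which simply says to rerun the proofs of \cite[Propositions~3.7, 3.8]{MR3552013} with \cite[Corollary~1.6.3.6]{MR3307753} replaced by \cite[Corollary~4.3]{arXiv:1408.5358} and \cite[Lemma~3.5]{MR3552013} by its analog (Remark~\ref{rem:same_statements}); your $\operatorname{Proj}R^{(m)}$ argument for (i) and Jacobian-plus-descent argument for (ii) are exactly what those references unpack to.

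One point in (ii) deserves tightening. You write that flatness of $\pi$ ``ensures, via Remark~\ref{rem:model_construction}, that $\pi$ is a $\GG_{m,A}^r$-torsor,'' because $\psi$ is an isomorphism over $\kbar$ and this ``spreads out by faithful flatness.'' That last step is not a valid inference as stated: a morphism between $A$-flat schemes which is an isomorphism over the generic point of $\Spec A$ need not be an isomorphism over $A$. What you actually need for the descent is only that $\pi$ be \emph{smooth}, and for this it is cleaner to argue fibrewise: the construction commutes with base change $A\to k(\p)$ (taking degree-$0$ parts commutes with tensor products), so $\pi_{k(\p)}\colon Y_{k(\p)}\to X_{k(\p)}$ is the morphism produced by Construction~\ref{hypot3} over the field $k(\p)$, hence a torsor by Remark~\ref{udesck} and in particular smooth. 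Since $\pi$ is flat, of finite presentation, and has smooth fibres, $\pi$ is smooth; then smoothness of $Y/A$ descends to $X/A$ as you say.
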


\begin{proof}
  For projectivity it suffices to replace \cite[Corollary 1.6.3.6]{MR3307753}
  by \cite[Corollary 4.3]{arXiv:1408.5358} and \cite[Lemma 3.5]{MR3552013} by
  its straightforward analog (cf. Remark \ref{rem:same_statements}) in the
  proof of \cite[Proposition 3.8]{MR3552013}.  Smoothness is proved by the
  same argument used in \cite[Proposition 3.7]{MR3552013}.
\end{proof}

\begin{remark}\label{integrality_remark}
  Assume that $\overline k$ has characteristic 0, that $\Xbar$ is
  normal and that $\Pic(\Xbar)$ is finitely generated.  By
  \cite[Theorem 1.5.1.1]{MR3307753}, a Cox ring of $\Xbar$ of identity
  type is an integral domain, hence also $\Rbar$ is an integral
  domain, as the grading group of $\Rbar$ is free.  Then $R$ is an
  integral domain if and only if
  $I\cap A[\eta_1,\dots,\eta_N]=(g_1,\dots,g_s)$.
\end{remark}

\section{Arithmetic functions}\label{sec:arithmetic_functions}

In this section, we study the function $\eta(\cdot;a)$ introduced in
\eqref{eq:eta}, and the Hecke $L$-function of the Legendre symbol, which plays
an important role in Section \ref{sec:averages} and in the study of the
Tamagawa number in Section \ref{sec:convergence_factors}.

Let $K$ be a number field and fix an $a \in \OO_K$ that is not a square.
Recall the Legendre symbol from \eqref{eq:legendre}.

\begin{lemma}\label{lem:eta_properties}
  The function $\eta(\qfr;a)$ defined in (\ref{eq:eta}) is multiplicative in
  $\qfr$. For a prime ideal $\p$ and $k \in \ZZp$, we have 
  \begin{equation*}
    \eta(\p^k;a)\!=\!
    \begin{cases}
      1+\legendre{a}{\p}, & \p \nmid 2a,\\
      1, & k \le v_\p(a),\\
      0, & k > v_\p(a),\ \text{$v_\p(a)$ odd},\\
      1+\legendre{a/\pi_\p^{v_\p(a)}}{\p}, &k > v_\p(a),\ \text{$v_\p(a)>0$ even},\
      \p \nmid 2,\\
      \le \N\p^{k-v_\p(a)}-\N\p^{k-v_\p(a)-1}, & v_\p(a)\!<\! k \!\le\!
      v_\p(4a)+1,\,\text{$v_\p(a)$ even},\,\p \mid 2,\\
      \eta(\p^{v_\p(4a)+1};a), & k > v_\p(4a)+1,\ \text{$v_\p(a)$ even},\ \p
      \mid 2.
    \end{cases}
  \end{equation*}
  Furthermore,
  \begin{equation*}
    \eta(\qfr;a) \ll 2^{\omega_K(\qfr)}.
  \end{equation*}
\end{lemma}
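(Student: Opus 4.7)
My plan is to prove the three assertions in order---multiplicativity, the prime-power formulas, and the bound---with the first two reducing the problem to local analysis at a single prime. For multiplicativity, if $\qfr_1, \qfr_2$ are coprime ideals, then both $\gfr = \qfr + a\OO_K$ and $\gfr' = \prod_\p \p^{\lceil v_\p(\gfr)/2\rceil}$ factor as $\gfr_1\gfr_2$ and $\gfr_1'\gfr_2'$, and so does the modulus $\qfr\gfr^{-1}\gfr'$. Since both the coprimality condition and the congruence $\rho^2 \equiv a \pmod{\qfr}$ are checked prime-by-prime, the Chinese Remainder Theorem delivers a bijection of solution sets, yielding $\eta(\qfr_1\qfr_2;a) = \eta(\qfr_1;a)\eta(\qfr_2;a)$. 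It thus suffices to treat $\qfr = \p^k$.

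At a prime power $\p^k$, I would work in the completion at $\p$ with uniformizer $\pi_\p$ and set $e = \min(k, v_\p(a))$, so that locally $v_\p(\gfr) = e$, $v_\p(\gfr') = \lceil e/2 \rceil$, and the modulus has valuation $k - e + \lceil e/2\rceil$; the coprimality condition then forces $v_\p(\rho) = \lceil e/2 \rceil$ when $k > e$, and $\rho \equiv 0$ in the quotient when $k = e$. When $\p \nmid 2a$, standard Hensel's lemma reduces the count of units $\rho$ with $\rho^2 \equiv a \pmod{\p^k}$ to the count modulo $\p$, giving $1 + \legendre{a}{\p}$. When $k \le v_\p(a)$, only $\rho \equiv 0$ is permitted, and this trivially satisfies $\rho^2 \equiv 0 \equiv a \pmod{\p^k}$. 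When $k > v_\p(a)$ with $v_\p(a)$ odd, the forced valuation $v_\p(\rho) = (v_\p(a)+1)/2$ gives $v_\p(\rho^2 - a) = v_\p(a) < k$, so no solution exists.

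The delicate cases have $v_\p(a)$ even and $k > v_\p(a)$. Writing $\rho = \pi_\p^{v_\p(a)/2} u$ with $u$ a unit, the congruence becomes $u^2 \equiv a/\pi_\p^{v_\p(a)} \pmod{\p^{k-v_\p(a)}}$. If $\p \nmid 2$, Hensel's lemma again reduces this to solvability modulo $\p$, yielding $1 + \legendre{a/\pi_\p^{v_\p(a)}}{\p}$. If $\p \mid 2$, the derivative $2u$ has positive valuation $v_\p(2)$ and naive Hensel fails: in the range $v_\p(a) < k \le v_\p(4a) + 1$ I would take the trivial upper bound by the total number of units modulo $\p^{k-v_\p(a)}$, namely $\N\p^{k-v_\p(a)} - \N\p^{k-v_\p(a)-1}$; for $k > v_\p(4a) + 1$, the strong form of Hensel's lemma (with $v_\p(f(u_0)) > 2v_\p(f'(u_0)) = 2v_\p(2)$) gives unique lifting at every further level, so $\eta(\p^k;a)$ stabilizes at $\eta(\p^{v_\p(4a)+1};a)$.

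For the global bound, the prime-power formulas above show $\eta(\p^k;a) \le 2$ whenever $\p \nmid 2a$, while the factors from the finitely many primes $\p \mid 2a$ are bounded by a constant depending only on $a$ and $K$; multiplicativity then gives $\eta(\qfr;a) \ll 2^{\omega_K(\qfr)}$. The main obstacle I anticipate is precisely the dyadic even-$v_\p(a)$ case, where naive Hensel fails above $2$ and we must use both a coarse unit-count upper bound on a bounded range of exponents and a separate stabilization argument above it; this is also why the lemma states only an inequality in case 5 rather than an exact value.
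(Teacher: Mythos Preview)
Your proposal is correct and follows essentially the same route as the paper: multiplicativity via the Chinese remainder theorem, then a prime-by-prime local analysis with Hensel's lemma handling $\p\nmid 2$ and the strong form of Hensel giving stabilization above $k=v_\p(4a)+1$ for $\p\mid 2$, and finally the coarse unit-count bound in the intermediate dyadic range. The only cosmetic difference is that you pass explicitly to the completion and substitute $\rho=\pi_\p^{v_\p(a)/2}u$, whereas the paper works directly with $\rho$ and phrases the intermediate-range bound as $\eta(\p^k;a)\le \N\p^{k-n-1}\eta(\p^{n+1};a)$ together with $\eta(\p^{n+1};a)\le \N\p-1$; these amount to the same inequality.
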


\begin{proof}
  The multiplicativity of $\eta(\qfr;a)$ follows from the Chinese remainder
  theorem. For $\p \nmid 2a$, we have $\eta(\p^k;a)=1+\legendre{a}{\p}$
  for $k=1$ by the definition of $\eta$, and for $k > 1$ by Hensel's lemma.
  For the rest of the proof, we assume $\p \mid 2a$ and write
  $n=v_\p(a)$.

  For $n>0$ and $k\le n$, we have
  \begin{equation*}
    \eta(\p^k;a) = |\{\rho \pmod {\p^{\ceil{k/2}}} :
    \rho\OO_K+\p^{\ceil{k/2}}=\p^{\ceil{k/2}},
    \ \congr{\rho^2}{a}{\p^k}\}|=1.
  \end{equation*}
  
  For odd $n \ge 1$ and $k>n$, we have
  \begin{equation*}
    \eta(\p^k;a) = |\{\rho \pmod {\p^{k-\floor{n/2}}} :
    \rho\OO_K+\p^{k-\floor{n/2}}=\p^{\ceil{n/2}},
    \ \congr{\rho^2}{a}{\p^k}\}|=0   
  \end{equation*}
  because the first condition implies $v_\p(\rho)=\ceil{n/2}$, hence
  $v_\p(\rho^2)=n+1$, which contradicts the second condition.

  For even $n \ge 0$ and $k>n$, we have
  \begin{equation*}
    \eta(\p^k;a) = |\{\rho \pmod {\p^{k-n/2}} :
    v_\p(\rho) = n/2,\ \congr{\rho^2}{a}{\p^k}\}|.
  \end{equation*}
  For $\p \nmid 2$, we observe that for any uniformizer $\pi_\p$ in $\OO_{\p}$,
  \begin{equation*}
    \eta(\p^{n+1};a)=1+\legendre{a/\pi_\p^{v_\p(a)}}{\p}.
  \end{equation*}
  For $\p \mid 2$, we have $\eta(\p^{n+1};a) \le \N\p-1$ since there are only
  $\N\p-1$ possibilites for $\rho \pmod{\p^{n/2+1}}$ satisfying
  $v_\p(\rho)=n/2$. Every such $\rho$ has at most $\N\p^{k-n-1}$ lifts
  $\rho' \pmod{\p^{k-n/2}}$, hence
  $\eta(\p^k;a) \le \N\p^{k-n-1}\eta(\p^{n+1};a)$ for $k>n+1$.

  By Hensel's lemma, each $\rho_0 \pmod{\p^{k_0-n/2}}$
  satisfying the congruence $\congr{\rho^2}{a}{\p^{k_0}}$ lifts to a unique
  solution $\rho \pmod{\p^{k-n/2}}$ satisfying $\congr{\rho^2}{a}{\p^k}$
  for every $k>k_0$ as soon as $k_0 = 2(v_\p(2\rho)) + 1 = v_\p(4a) + 1$.
  
  The upper bound for $\eta(\qfr;a)$ follows from the
  multiplicativity of $\eta$ and the fact that, for $k > 0$, we have
  $\eta(\p^k;a) \le 2$ for all $\p \nmid 2$, while
  $\eta(\p^k;a) \le \N\p^{v_\p(4)+1}$ for $\p \mid 2$.
\end{proof}

\begin{example}
  For $K=\QQ$ and squarefree $a \in \ZZ$, we have
  \begin{equation*}
    \eta(p^k;a) =
    \begin{cases}
      1+\legendre{a}{p}, & p \nmid 2a,\\
      1, & p \mid a,\ k=1,\\
      1, & p = 2,\ k=1,\ \congr{a}{1}{2},\\
      2, & p = 2,\ k=2,\ \congr{a}{1}{4},\\
      4, & p = 2,\ k\ge 3,\ \congr{a}{1}{8},\\
      0, & \text{else.}
    \end{cases}
  \end{equation*}
\end{example}

For an ideal $\qfr= \prod_{i=1}^r \p_i^{e_i}$ (with prime ideals $\p_i$ and
$e_i \in \ZZp$) that is coprime to $2a\OO_K$, we have as in
\cite[Definition~VI.8.2]{MR1697859}
\begin{equation*}
  \legendre{a}{\qfr} =  \prod_{i=1}^r \legendre{a}{\p_i}^{e_i}.
\end{equation*}
For $\qfr \in \Is_K$, let
\begin{equation}\label{eq:def_chi}
  \chi(\qfr)=
  \begin{cases}
    \legendre{a}{\qfr}, & \qfr + 2a\OO_K = \OO_K,\\
    0, & \text{else.}
  \end{cases}
\end{equation}

Recall the definition \cite[Definition~X]{MR1544310} of characters modulo an
ideal $\ffr$. In the terminology of \cite[\S 1]{MR0218327},
\cite[VII.6.8]{MR1697859}, these are are \emph{(generalized) Dirichlet
  characters} on the narrow ray class group modulo $\ffr$.  The following
result must be well-known; over $\QQ$, see \cite[\S VI.1.3,
Proposition~5]{MR0344216}.

\begin{lemma}
  The function $\chi$ as in (\ref{eq:def_chi}) is a nonprincipal character
  modulo $8a\OO_K$.
\end{lemma}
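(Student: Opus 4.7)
The plan is to verify the three properties characterizing a nonprincipal character of the narrow ray class group modulo $\ffr \coloneqq 8a\OO_K$: (a) $\chi$ is multiplicative on ideals coprime to $\ffr$, with $\chi(\qfr)=0$ precisely when $\qfr$ shares a prime with $\ffr$; (b) $\chi((\alpha))=1$ for every totally positive $\alpha \in K^\times$ with $\congr{\alpha}{1}{\ffr}$; (c) $\chi$ is nontrivial.

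Property (a) follows from the definitions: the prime divisors of $2a\OO_K$ and $8a\OO_K$ coincide, so the vanishing convention is as required, and on the remaining ideals $\chi$ agrees with the Legendre symbol $\legendre{a}{\cdot}$, which is multiplicative by construction. For (c), since $a$ is not a square in $K$, the field $K(\sqrt a)$ is a proper quadratic extension of $K$; by Chebotarev density there are infinitely many prime ideals $\p \nmid 2a$ inert in $K(\sqrt a)/K$, and at each such $\p$ we have $\chi(\p)=\legendre{a}{\p}=-1$.

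The substance lies in (b), which I would deduce from Hilbert reciprocity for the quadratic Hilbert symbol,
\begin{equation*}
  \prod_{v \in \Omega_K} (a,\alpha)_v = 1.
\end{equation*}
At every archimedean place the local factor equals $1$: complex places contribute trivially, and at a real place $(a,\alpha)_v=-1$ would require $\sigma_v(\alpha)<0$, excluded by total positivity. At each finite $\p \nmid 2a$, the congruence $\congr{\alpha}{1}{\ffr}$ forces $\alpha$ to be coprime to $2a\OO_K$, so $a$ is a $\p$-adic unit and the tame formula yields $(a,\alpha)_\p = \legendre{a}{\p}^{v_\p(\alpha)}$; the product over such $\p$ collapses to $\legendre{a}{(\alpha)}=\chi((\alpha))$. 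At each $\p \mid 2a$, it will suffice to show $\alpha \in (K_\p^\times)^2$, whence $(a,\alpha)_\p=1$, and Hilbert reciprocity then delivers $\chi((\alpha))=1$.

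The main obstacle is therefore the square-residue verification at places $\p \mid 2a$, and in particular at $\p \mid 2$ where wild ramification is possible. For $\p \nmid 2$ with $\p \mid a$, $\alpha$ lies in $1+\p\OO_{K,\p}$, and Hensel's lemma applied to $X^2-\alpha$ (whose derivative $2X$ is a unit at $X=1$ since $\p \nmid 2$) exhibits $\alpha$ as a square. For $\p \mid 2$, write $e \coloneqq v_\p(2)$; a direct computation with the binomial expansion shows the squaring map carries $1+\p^{e+1}\OO_{K,\p}$ onto $1+\p^{2e+1}\OO_{K,\p}$, so that $1+\p^{2e+1}\OO_{K,\p} \subseteq (K_\p^\times)^2$. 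Since $v_\p(\ffr)=3e+v_\p(a) \ge 2e+1$, the hypothesis $\congr{\alpha}{1}{\ffr}$ places $\alpha$ in $1+\p^{2e+1}\OO_{K,\p}$, giving the desired containment and closing the argument.
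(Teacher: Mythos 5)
Your proof is correct and follows essentially the same route as the paper's: both establish triviality of $\chi$ on the principal ray via reciprocity for the quadratic Hilbert symbol, with the decisive local input being that $\congr{\alpha}{1}{8}$ forces $\alpha$ to be a square in $K_\p$ for $\p \mid 2$. The only (cosmetic) differences are that you invoke the Hilbert product formula $\prod_v(a,\alpha)_v=1$ directly and evaluate each local symbol, where the paper cites the reciprocity statement \cite[Theorem~VI.8.3]{MR1697859} and evaluates $\legendre{\alpha}{a}$ by a residue computation, and that you use Chebotarev for nontriviality where the paper uses Grunwald--Wang.
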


\begin{proof}
  Let $\ffr\coloneqq8a\OO_K$. By definition, $\chi$ is a multiplicative function on
  the monoid of all ideals in $\Is_K$ that are coprime to $\ffr$.

  Let $x \in \OO_K$ be totally positive with $\congr{x}{1}{\ffr}$. By reciprocity
  \cite[Theorem~VI.8.3]{MR1697859},
  \begin{equation*}
    \chi(x\OO_K) = \legendre{a}{x} = \legendre{x}{a}
    \prod_{\p \mid 2\infty} \hilbert{x}{a}{\p},
  \end{equation*}
  where $\hilbert{\cdot}{\cdot}{\p}$ is the Hilbert symbol \cite[\S
  V.3]{MR1697859}. Since $x$ is totally positive, $\hilbert{x}{a}{\p}=1$ for
  all $\p \mid \infty$. By Hensel's lemma, $\congr{x}{1}{8}$ implies that $x$
  is a square in $K_\p$ for every $\p \mid 2$, hence $\hilbert{x}{a}{\p}=1$
  for all $\p \mid 2$. Writing $a\OO_K=\prod_{\p\mid a} \p^{e_\p}$, we have
  \begin{equation*}
    \legendre{x}{a} = \prod_{\p\mid a} \legendre{x}{\p}^{e_\p} = 1
  \end{equation*}
  as $\congr{\legendre{x}{\p}}{x^{(\N\p-1)/2}}{\p}$ and $\congr{x}{1}{\p}$ for
  all $\p \mid a$.  In total, this proves $\chi(x\OO_K)=1$.

  Next, we claim that $\chi(\afr)=\chi(\bfr)$ for all ideals
  $\afr \sim \bfr \pmod{\ffr}$ as in \cite[Definition~VIII]{MR1544310} (that is,
  $\afr+\ffr=\bfr+\ffr = \OO_K$ and there are totally positive
  $\alpha,\beta \in \OO_K$ with $\alpha\afr = \beta\bfr$ and
  $\alpha \equiv \beta \equiv 1 \pmod \ffr$). By multiplicativity
  \begin{equation*}
    \chi(\alpha\OO_K)\chi(\afr) = \chi(\alpha\afr) =
    \chi(\beta\bfr) = \chi(\beta\OO_K)\chi(\bfr).
  \end{equation*}
  Since $\chi(\alpha\OO_K)=\chi(\beta\OO_K)=1$ as above, our claim follows.
  
  By the Grunwald--Wang Theorem \cite[\S X]{MR2467155}, 
  the fact
  that $a$ is not a square in $K$ implies that it is not a square in $K_\p$
  for infinitely many $\p$, in particular for some $\p \nmid 2a$, and for
  these $\chi(\p)=\legendre{a}{\p}=-1$. Therefore, $\chi$ is not the principal
  character modulo $\ffr$.
\end{proof}

\begin{lemma}\label{lem:upper_bound_eta_omega_new}
  Assume that $a \in \OO_K$ is not a square. Let $C \ge 0$ and $\epsilon > 0$. For
  $t \ge 0$, we have
  \begin{equation*}
    \sums{\qfr \in \Is_K\\\N\qfr \le t} \eta(\qfr;a)\cdot
    (1+C)^{\omega_K(\qfr)}
    \ll_{C,a,\epsilon} t(\log(t+2))^{C+\epsilon}.
  \end{equation*}
\end{lemma}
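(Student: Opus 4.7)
The function $f(\qfr) := \eta(\qfr;a)(1+C)^{\omega_K(\qfr)}$ is non-negative and multiplicative by Lemma~\ref{lem:eta_properties}. My plan is to study the associated Dirichlet series
\[
D(s) := \sum_{\qfr\in\Is_K}\frac{f(\qfr)}{\N\qfr^s}
\]
by identifying its principal singularity at $s=1$ through $\zeta_K$ and the Hecke $L$-function $L(s,\chi)$ attached to the character $\chi$ from \eqref{eq:def_chi}, and then to conclude via a standard Tauberian argument.

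I would first compute the local Euler factors of $D(s)$ using Lemma~\ref{lem:eta_properties}. For $\p \nmid 2a$, the values $\eta(\p^k;a)=1+\chi(\p)$ are independent of $k\ge 1$, so the Euler factor of $D(s)$ at such $\p$ equals
\[
1+(1+\chi(\p))(1+C)\,\frac{\N\p^{-s}}{1-\N\p^{-s}}.
\]
A direct expansion shows that this factor coincides with $(1-\N\p^{-s})^{-(1+C)}(1-\chi(\p)\N\p^{-s})^{-(1+C)}$ up to a multiplicative factor of the form $1+O(\N\p^{-2s})$, whether $\chi(\p)=1$ or $\chi(\p)=-1$. For the finitely many $\p \mid 2a$, the stabilization $\eta(\p^k;a)=\eta(\p^{v_\p(4a)+1};a)$ for $k>v_\p(4a)+1$ shows that the local factor of $D(s)$ is a rational function in $\N\p^{-s}$ of bounded degree. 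Assembling these local pieces yields a factorization
\[
D(s) = \zeta_K(s)^{1+C}\, L(s,\chi)^{1+C}\, G(s),
\]
where $G(s)$ is an Euler product that converges absolutely on some half-plane $\Re(s)>1/2+\delta$ with $\delta>0$.

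By the previous lemma, $\chi$ is a nontrivial character modulo $8a\OO_K$, so $L(s,\chi)$ extends to an entire function with $L(1,\chi)\ne 0$. Together with the simple pole of $\zeta_K$ at $s=1$, this shows that $D(s)$ has an algebraic singularity of order exactly $1+C$ at $s=1$ and extends meromorphically to the left of $\Re(s)=1$. Applying the Selberg--Delange theorem (or, equivalently, Perron's formula with a contour shift past $\Re(s)=1$, using standard convexity bounds and zero-free regions for $\zeta_K$ and $L(s,\chi)$) then yields the claimed bound; in fact one obtains the stronger estimate $\ll_{C,a} t(\log(t+2))^C$, so the $\epsilon$ provides ample room for the Tauberian error. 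The main obstacle is the careful handling of the local factors at the bad primes $\p \mid 2a$, especially those $\p \mid 2$ with $v_\p(a)$ even, where $\eta(\p^k;a)$ grows in $k$ before stabilizing: verifying that their combined contribution gives a $G(s)$ that is analytic and bounded in vertical strips past $\Re(s)=1$ is finite-prime bookkeeping, but requires attention.
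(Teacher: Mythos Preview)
Your approach is correct but takes a genuinely different route from the paper. The paper argues elementarily: it invokes a Wirsing-type upper bound (via \cite[Lemma~2.9]{MR3269462}) of the shape
\[
\Sigma \ll t \cdot \exp\biggl(\sum_{\N\p \le t} \frac{\eta(\p;a)(1+C)-1}{\N\p}\biggr),
\]
and then, writing $\eta(\p;a)(1+C)-1 = C + (1+C)\chi(\p)$ for $\p \nmid 2a$, estimates the exponent using Mertens-type results for the principal character and for $\chi$ (\cite[Satz~LXXXIV]{MR1544310}). This gives $C\log\log t + (1+C)\,o(\log\log t) + O(1)$ in the exponent, and absorbing the $o(\log\log t)$ into $\epsilon\log\log t$ yields the claim. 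No Dirichlet series, analytic continuation, or Tauberian theorem is used; the only analytic input is that $\sum_{\N\p\le t}\chi(\p)/\N\p$ grows more slowly than $\log\log t$.

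Your Selberg--Delange route is heavier but sharper: it would give the bound without the $\epsilon$, and indeed an asymptotic. The price is that you must define the complex powers $\zeta_K(s)^{1+C}$ and $L(s,\chi)^{1+C}$ near $s=1$, and quote analytic continuation, vertical-strip bounds, and a zero-free region for both $\zeta_K$ and $L(s,\chi)$. Your bad-prime bookkeeping is fine: each local factor of $D(s)$ at $\p\mid 2a$ is a rational function in $\N\p^{-s}$ with poles only on $\Re(s)=0$, so after dividing by $(1-\N\p^{-s})^{-(1+C)}$ the contribution to $G(s)$ is analytic and bounded on $\Re(s)>1/2+\delta$. For the purposes of this paper---where the lemma is only ever used as an upper bound to control error terms---the paper's shortcut is entirely adequate and avoids the analytic overhead.
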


\begin{proof}
  Let $\Sigma$ be the sum that we want to estimate. We may assume $t \ge 1$. As in
  \cite[Lemma~2.9]{MR3269462}, we deduce
  \begin{equation*}
    \Sigma \ll t\cdot\exp\left(\sum_{\N\p \le t} \frac{\eta(\p;a)\cdot (1+C)-1}{\N\p}\right).
  \end{equation*}
  Since $\eta(\p;a) = 1+\legendre{a}{\p}$ for $\p \nmid 2a$,
  an application of \cite[Satz~LXXXIV]{MR1544310} to the principal character and to
  $\chi$ as in (\ref{eq:def_chi}) shows that the sum over $\p$ is
  \begin{equation*}
    \ll C\cdot \log \log t + (1+C)\cdot o_a(\log \log t) + O_{C,a}(1),
  \end{equation*}
  where the $O_{C,a}(1)$-term accounts for the summands with $\p \mid
  2a$. Estimating $o_a(\log \log t)$ by
  $\frac{\epsilon}{1+C} \log \log t$ for $t \gg_{C,a,\epsilon} 1$ completes
  the proof.
\end{proof}

\begin{lemma}\label{lem:L-function}
  Let $\chi$ be as in~(\ref{eq:def_chi}).
  \begin{enumerate}[label=(\roman*), ref=(\roman*)]
  \item\label{it:absolute_convergence} For $s \in \CC$ with $\Re(s) > 1$, we have
    \begin{equation*}
      L(s,\chi)\coloneqq\sum_{\qfr \in \Is_K} \frac{\chi(\qfr)}{\N\qfr^s} = \prod_\p \frac{1}{1-\frac{\chi(\p)}{\N\p^s}},
    \end{equation*}
    where the series and the Euler product converge absolutely. It can be
    extended to an entire function on $\CC$. 
  \item\label{it:conditional_convergence} The series $L(s,\chi)$ converges
    conditionally for $\Re(s)>1-\frac{2}{d+1}$. 
  \item\label{it:value_at_1}  We have
    \begin{equation*}
      L(1,\chi) = \prod_\p \frac{1}{1-\frac{\chi(\p)}{\N\p}} =
      \prod_{\p \nmid 2a} \frac{1}{1-\frac{\legendre{a}{\p}}{\N\p}} \ne 0,
    \end{equation*}
    where the product converges conditionally. 
  \item\label{it:vertical_growth} For $\epsilon>0$ and
    $|\Im(s)| \ge 2$ and $\Re(s) \ge -\epsilon$, we have
    \begin{equation*}
      L(s,\chi) \ll_\epsilon |\Im(s)|^{(\frac 1 2+\epsilon)d}.
    \end{equation*}
  \end{enumerate}
\end{lemma}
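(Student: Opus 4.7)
The plan is to recognize that, by the preceding lemma, $\chi$ is a nonprincipal character on the narrow ray class group modulo $\ffr = 8a\OO_K$, so $L(s,\chi)$ is a Hecke $L$-function of a finite-order ray class character in the sense of \cite{MR1544310, MR0218327}. The four assertions then follow from classical results on such $L$-functions via partial summation and convexity.

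For \ref{it:absolute_convergence}, absolute convergence for $\Re(s)>1$ and the Euler product are immediate from $|\chi(\qfr)|\le 1$, multiplicativity of $\chi$, and the analogous properties of $\zeta_K(s)$. The entire continuation is Hecke's theorem: a nonprincipal Hecke $L$-function has no pole and extends holomorphically to $\CC$ via its functional equation. For \ref{it:conditional_convergence}, I would apply Landau's theorem counting ideals in a fixed narrow ray class modulo $\ffr$ with error term $O_K(x^{1-2/(d+1)})$; the main terms cancel when weighted by the nonprincipal $\chi$, yielding
\[
\Bigl|\sum_{\N\qfr\le x}\chi(\qfr)\Bigr| \ll_\epsilon x^{1-2/(d+1)+\epsilon},
\]
and Abel summation gives conditional convergence of $L(s,\chi)$ on $\Re(s)>1-2/(d+1)$.

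For \ref{it:value_at_1}, the nonvanishing $L(1,\chi)\ne 0$ is Hecke's extension of Dirichlet's theorem to nontrivial ray class characters. To show that the Euler product converges conditionally at $s=1$ with value $L(1,\chi)$, I would use a Mertens-type argument: from
\[
\log L(s,\chi) = \sum_\p \frac{\chi(\p)}{\N\p^s} + g(s),
\]
valid for $\Re(s)>1$ with $g$ holomorphic for $\Re(s)>1/2$, combined with conditional convergence of $\sum_\p\chi(\p)/\N\p$ at $s=1$ (from \ref{it:conditional_convergence} and summation by parts on primes), one gets the claimed convergence of the Euler product at $s=1$ to $L(1,\chi)$. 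The final equality drops the trivially vanishing factors at $\p\mid 2a$.

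For \ref{it:vertical_growth}, I would use the functional equation
\[
\Lambda(s,\chi) = W(\chi)\,\Lambda(1-s,\chi), \qquad \Lambda(s,\chi) = A^{s/2}\,\gamma(s,\chi)\,L(s,\chi),
\]
where $A\in\ZZp$ and $\gamma(s,\chi)$ is a product of $r_1+r_2$ archimedean gamma factors (one real factor of the form $\Gamma(s/2)$ or $\Gamma((s+1)/2)$ per real place, and one complex factor $\Gamma(s)$ per complex place), with the shifts encoding the archimedean ramification of $\chi$. Absolute convergence gives $L(s,\chi)\ll 1$ on $\Re(s)=1+\epsilon$; on $\Re(s)=-\epsilon$, Stirling applied to the quotient $\gamma(1-s,\chi)/\gamma(s,\chi)$ yields $L(s,\chi)\ll_\epsilon |\Im(s)|^{(1/2+\epsilon)d}$, where the exponent $d=r_1+2r_2$ arises as one $|\Im(s)|^{1/2}$ per archimedean place. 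Phragm\'en-Lindel\"of applied to the entire function $L(s,\chi)$ on the strip $-\epsilon\le\Re(s)\le 1+\epsilon$ then extends the bound to the whole region. The main obstacle is the careful bookkeeping of the gamma factors in order to verify that the exponent of $|\Im(s)|$ in the Stirling estimate is exactly $d/2$; once this is in place the convexity argument is routine.
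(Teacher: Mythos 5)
Your overall route is the same as the paper's: the key input is the preceding lemma identifying $\chi$ as a nonprincipal character modulo $8a\OO_K$, after which everything is classical Hecke--Landau theory. For \ref{it:absolute_convergence} and \ref{it:vertical_growth} the paper simply cites Landau (Satz LXIII and Satz LXX); your re-derivation of the convexity bound via the functional equation, Stirling and Phragm\'en--Lindel\"of is correct, and the bookkeeping giving one factor $|\Im(s)|^{1/2-\sigma}$ per real place and $|\Im(s)|^{1-2\sigma}$ per complex place does produce the exponent $(\tfrac12+\epsilon)d$ at $\Re(s)=-\epsilon$. For \ref{it:conditional_convergence} your argument (Landau's ideal count in narrow ray classes, cancellation of main terms, Abel summation) is exactly the paper's, which uses Satz XCV for the bound $A(x)=O(x^{1-2/(d+1)})$.

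The one genuine gap is in \ref{it:value_at_1}. You claim the conditional convergence of $\sum_\p \chi(\p)/\N\p$ follows ``from \ref{it:conditional_convergence} and summation by parts on primes.'' It does not: part \ref{it:conditional_convergence} controls the partial sums $\sum_{\N\qfr\le x}\chi(\qfr)$ over \emph{all} ideals, and no summation-by-parts manipulation extracts from this a bound on $\sum_{\N\p\le x}\chi(\p)$ over prime ideals only. Likewise, knowing that $\log L(s,\chi)$ extends continuously to $s=1$ (from analytic continuation and $L(1,\chi)\ne 0$) only shows that the Abel means $\sum_\p\chi(\p)\N\p^{-s}$ are bounded as $s\to 1^+$; concluding convergence of the series at $s=1$ is a Tauberian statement requiring prime-ideal-theorem strength input. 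This is precisely why the paper invokes a Mertens-type theorem for number fields \cite[Theorem~4]{MR1700882} together with \cite[Satz~LXXXIII]{MR1544310}, which give
\begin{equation*}
  \prod_{\N\p\le x}\Bigl(1-\tfrac{\chi(\p)}{\N\p}\Bigr)^{-1}=L(1,\chi)+O_K((\log x)^{-1})
\end{equation*}
directly. Your Mertens-type framing is the right idea, but the convergence of the prime sum must be supplied by such a result (or by the prime ideal theorem for ray class characters), not deduced from \ref{it:conditional_convergence}.
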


\begin{proof}
  For \ref{it:absolute_convergence}, see \cite[Definition~XVII,
  Satz~LVIII]{MR1544310}. Since $\chi$ is nonprincipal, the second statement is
  \cite[Satz~LXIII]{MR1544310}.

  For \ref{it:conditional_convergence}, let $\delta\coloneqq\frac{2}{d+1}$. By
  \cite[Satz~XCV]{MR1544310},
  \begin{equation*}
    A(x)\coloneqq\sum_{\N\qfr \le x} \chi(\qfr) = O(x^{1-\delta}).
  \end{equation*}
  Let $\Re(s)>1-\delta$. By partial summation, for $x > 1$,
  \begin{equation*}
    \sum_{\N\qfr \le x} \frac{\chi(\qfr)}{\N\qfr^s} = \frac{A(x)}{x^s} -
    \int_1^x \frac{-s\cdot A(t)}{t^{s+1}} \dd t.
  \end{equation*}
  Therefore,
  \begin{equation*}
    \sum_{\N\qfr \le x} \frac{\chi(\qfr)}{\N\qfr^s}-\int_1^\infty \frac{s\cdot A(t)}{t^{s+1}} \dd t
    \ll \frac{A(x)}{x^s} + \int_x^\infty \frac{s\cdot A(t)}{t^{s+1}} \dd t \ll_s x^{1-\delta-\Re(s)}.
  \end{equation*}
  As $x \to \infty$, this shows that $L(s,\chi)$ converges.

  For \ref{it:value_at_1},
  \cite[Theorem~4]{MR1700882} and \cite[Satz~LXXXIII]{MR1544310} imply that
  \begin{equation*}
    \prod_{\N\p \le x} \frac{1}{1-\frac{\chi(\p)}{\N\p}} =
    L(1,\chi)+O_K((\log x)^{-1}),
  \end{equation*}
  where $L(1,\chi) \ne 0$ by \cite[Satz~LXIV]{MR1544310}.

  For \ref{it:vertical_growth}, see \cite[Satz~LXX]{MR1544310}.
\end{proof}

\section{The surfaces and their torsors}\label{sec:surfaces_torsors}

In this section, we study the nonsplit quartic del Pezzo surfaces with
singularities $\Athree+\Aone$, their Cox rings and their split torsors.

\subsection{The surfaces}\label{sec:dPsurfaces}

\begin{prop}\label{prop:classification}
  Every quartic del Pezzo surface with singularities $\Athree+\Aone$ over $K$
  is isomorphic to precisely one of the surfaces $S_a \subset \PP^4_{K}$
  defined by (\ref{eq:equationsSa}), where $a$ runs through a system of
  representatives of $K^\times/K^{\times 2}$.
\end{prop}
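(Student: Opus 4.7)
The plan is to split the proposition into three parts: (i) verify that each $S_a$ really is an $\Athree+\Aone$ quartic del Pezzo surface, (ii) show the $K$-isomorphism class depends only on $a \pmod{K^{\times 2}}$, and (iii) show every such surface arises as some $S_a$ and the dependence on the square class is faithful.

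First, I would check (i) by a direct local computation: the Jacobian criterion applied to the two defining quadrics of $S_a$ shows that $S_a$ is smooth outside the two points $p_1=(1{:}0{:}0{:}0{:}0)$ and $p_2=(0{:}0{:}1{:}0{:}0)$. Dehomogenizing at $x_0=1$ and eliminating, one sees that the completed local ring at $p_1$ is isomorphic to $K[[u,v,w]]/(uv-w^4)$ (an $\Athree$-singularity), and similarly that $p_2$ gives an $\Aone$-singularity. Since $S_a$ is a complete intersection of two quadrics in $\PP^4_K$ with at worst rational double points, it is anticanonically embedded and thus a quartic del Pezzo surface of the claimed type.

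For (ii), the substitution $x_3 \mapsto c x_3$, $x_2 \mapsto c^{-1} x_2$ (with the other coordinates unchanged) converts the equations of $S_a$ into those of $S_{a c^2}$, so $S_a \cong S_{a c^2}$ over $K$ for every $c \in K^\times$. Hence the isomorphism class of $S_a$ depends only on the class of $a$ in $K^\times / K^{\times 2}$. Conversely, to see that different square classes give nonisomorphic surfaces, I would exploit the intrinsic invariant: the minimal field of definition of the three lines on $S_a$. As recorded in the introduction, the central line is $K$-rational, while the two end lines are defined over $K(\sqrt{a})$ and conjugate under $\Gal(K(\sqrt a)/K)$ when $a$ is not a square. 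Thus the splitting field of the set of lines is $K(\sqrt a)$, and this is a $K$-isomorphism invariant; an isomorphism $S_a \cong S_{a'}$ forces $K(\sqrt a)=K(\sqrt{a'})$, hence $a/a' \in K^{\times 2}$ (the case where $a$ or $a'$ is a square collapses to $S_1$, which is handled separately).

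The main work lies in (iii), showing every $\Athree+\Aone$ quartic del Pezzo surface $S$ over $K$ is $K$-isomorphic to some $S_a$. The geometric input is that over $\Kbar$ any two such surfaces are isomorphic, so $S_{\Kbar}\cong (S_1)_\Kbar$, and the configuration of $(-2)$-curves on the minimal desingularization $\tS_\Kbar$ is a chain of three curves (the $\Athree$) plus one isolated curve (the $\Aone$), together with three lines. The absolute Galois group $\Gal(\Kbar/K)$ must respect this dual graph, and its only nontrivial automorphism is the involution reversing the $\Athree$-chain (and correspondingly swapping the two end lines on $S$). In particular, the middle line $\ell_0$ is always $K$-rational, and both singular points are $K$-rational. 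I would then make a coordinate change realizing $\ell_0=\{x_1=x_3=x_4=0\}$ and the singularities at $p_1,p_2$; the quadrics in the ideal of $S$ must vanish there with prescribed multiplicities, and tracking what this forces on the monomials (together with diagonalizing a $K$-rational conic over $\ell_0$'s orthogonal complement) produces the equations $x_0x_4+x_1^2-ax_3^2=0$ and $x_2x_3-x_4^2=0$ with a single parameter $a \in K^\times$.

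The main obstacle is the descent argument in step (iii): bringing an abstract $\Athree+\Aone$ surface into the standard form requires a careful choice of coordinates adapted to the $K$-rational line, the two $K$-rational singular points, and the $K$-rational part of the Picard group, and all remaining freedom must be shown to produce exactly one diagonal coefficient $a$ (determined up to $K^{\times 2}$ by the previous steps). Once that normalization is carried out, combining (i)--(iii) yields the claimed bijection between $K$-isomorphism classes and $K^\times/K^{\times 2}$.
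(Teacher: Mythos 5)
Your decomposition into (i) each $S_a$ is of type $\Athree+\Aone$, (ii) the class of $a$ in $K^\times/K^{\times2}$ is a complete invariant among the $S_a$, and (iii) every such surface is some $S_a$, is sound, and (i) and (ii) go through: the substitution $x_2\mapsto c^{-1}x_2$, $x_3\mapsto cx_3$ does give $S_a\cong S_{ac^2}$, and the splitting field of the set of the three lines is a legitimate $K$-isomorphism invariant that separates square classes. (The paper handles (i) differently, by exhibiting a linear isomorphism over $K(\sqrt a)$ with the known split surface of this type rather than by a local Jacobian/complete-local-ring computation; both work. One small inaccuracy: the nontrivial automorphism of the dual graph does not ``reverse the $\Athree$-chain'' --- the end vertices of the chain have different valences --- it fixes all $(-2)$-curves and only swaps the two $(-1)$-curves $D_6,D_7$; this is what you actually use, so it is harmless.)

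The genuine gap is step (iii), which is the substantive content of the proposition and which you yourself flag as ``the main obstacle'' without resolving it. Normalizing an abstract $\Athree+\Aone$ surface directly as a pencil of quadrics in $\PP^4_K$ --- choosing coordinates adapted to the two singular points and the $K$-rational line, then arguing that the remaining freedom forces the two displayed quadrics with a single parameter $a$ --- is exactly the hard part, and no argument is given for why the monomial bookkeeping closes up with only the one diagonal coefficient surviving. The paper avoids this entirely by a different device: since the Galois action fixes $D_1,\dots,D_5$ and can at most swap $D_6,D_7$, the contraction of $D_6,D_7$ followed by the iterated contraction of $D_5,D_4,D_3$ is defined over $K$ and exhibits $\tX$ as an iterated blow-up of $\PP^2_K$ determined by a $K$-rational line $\rho(D_1)$, a second $K$-rational line $\rho(D_2)$, and a Galois-stable pair of points on $\rho(D_2)$. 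Normalizing that configuration under $\mathrm{PGL}_3(K)$ is elementary (move the lines to coordinate lines and the point pair to $(1:0:\pm\sqrt a)$), and both existence and uniqueness of $a$ modulo squares drop out at once. To complete your route you would either have to carry out the $\PP^4$ normalization in full, or replace it by some such reduction to a rigid auxiliary configuration; as written, the surjectivity of $a\mapsto S_a$ is asserted but not proved.
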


\begin{proof}
  Let $X$ be such a singular surface with minimal desingularization $\tX$. By
  \cite[Proposition 6.1]{MR940430}, $\tX_\Kbar$ contains seven negative
  curves: four $(-2)$-curves $D_1,\dots,D_4$ and three $(-1)$-curves
  $D_5,D_6,D_7$ whose intersections are encoded in the Dynkin diagram:
  \begin{equation}\label{eq:Dynkindiagram}
    \xymatrix@R=0.1in @C=0.1in{
      D_7\ar@{-}[rd]& & & & &\\
      &  \ex{D_2} \ar@{-}[r]& \ex{D_3}\ar@{-}[r] & \ex{D_4}\ar@{-}[r] & D_5\ar@{-}[r]& \ex{D_1},\\
      D_6\ar@{-}[ru]& & & & &
    }
  \end{equation}
  We use the same name for a curve and its strict transforms under
  blow-ups. As discussed in the proof of \cite[Lemma~7.4]{MR940430}, the
  simultaneous contraction of $D_6,D_7$ and the interated contraction of
  $D_5,D_4,D_3$ defines a morphism $\rho \colon \tX \to \PP^2_K$ over $K$. We
  observe that $\rho(D_1)$, $\rho(D_2)$ are lines in $\PP^2_K$ defined over
  $K$, that $\rho(D_3)$, $\rho(D_4)$, $\rho(D_5)$ are the intersection point
  $\rho(D_1)\cap\rho(D_2)$, and that $\rho(D_6)$, $\rho(D_7)$ are distinct points on
  $\rho(D_2)$ over $\Kbar$ that are either conjugate under $\Gal(\Kbar/K)$
  (and therefore defined over a quadratic extension of $K$) or each
  defined over $K$.  We note that $\tX$ can be recovered from the line
  $\rho(D_1)$ and the (possibly conjugate) points
  $\rho(D_6),\rho(D_7)$ by the simultaneous blow-up of $\rho(D_6)$,
  $\rho(D_7)$ and three interated blow-ups that are uniquely determined by
  $\rho(D_1)$, $\rho(D_2)$.

  Let $a\in K$ be such that $\rho(D_6)$, $\rho(D_7)$ are defined over
  $K(\sqrt a)$.  Up to a linear change of coordinates $(z_0:z_1:z_2)$ on
  $\PP^2_K$ over $K$, we may assume that $\rho(D_1)=\{z_0=0\}$,
  $\rho(D_6)=(1:0:\sqrt{a})$, and $\rho(D_7)=(1:0:-\sqrt{a})$. This shows that
  $\tX$ is uniquely determined by the class of $a$ in $K^\times/K^{\times
    2}$. Applying the anticanonical map, the same holds for $X$.

  On the other hand, $S_a$ defined by~(\ref{eq:equationsSa}) is a quartic del
  Pezzo surface of type $\Athree+\Aone$ that splits over $K(\sqrt{a})$ because
  it is isomorphic via a linear change of coordinates over $K(\sqrt{a})$ to
  the split quartic del Pezzo surface of type $\Athree+\Aone$ described in
  \cite[\S3.4]{math.AG/0604194}. Therefore, $X \cong S_a$.
\end{proof}

From now on, we fix an element $a \in \OO_K$ that is not a square, and we
denote $S_a$ by $S$.  Let $\phi\colon\tS\to S$ be a minimal
desingularization. The projection
\begin{equation*}
  S\dashrightarrow \PP^2_K, \quad
  (x_0:x_1:x_2:x_3:x_4)\mapsto (x_3:x_4:x_1)
\end{equation*}
from the line $\{x_1=x_3=x_4=0\}$ maps the other two lines
$\{x_1\pm\sqrt{a}x_3=x_2=x_4=0\}$ to $(1:0:\mp\sqrt{a})$ and induces a
morphism $\rho\colon \tS \to\PP^2_K$ that is precisely the sequence of
blow-ups in the proof of Proposition~\ref{prop:classification}.

\subsection{Cox rings}

By \cite[\S3.4]{math.AG/0604194}, there is a basis
$\ell_0,\dots,\ell_5$ of $\Pic(\tS_{\Kbar})$ such that the
$(-2)$-curves in the Dynkin diagram \eqref{eq:Dynkindiagram} have
classes
\begin{gather*}
[D_1]=\ell_0-\ell_1-\ell_2-\ell_3,\ 
[D_2]= \ell_0-\ell_1-\ell_4-\ell_5, \ 
[D_3]=\ell_1-\ell_2, \ 
[D_4]=\ell_2-\ell_3, \ 
\end{gather*}
and the $(-1)$-curves have classes
\begin{gather*}
[D_5]=\ell_3,\ 
[D_6]=\ell_4,\ 
[D_7]=\ell_5, 
\end{gather*}
in $\Pic(\tS_{\Kbar})$. 
Let $D_8$ and $D_9$ be the strict transforms of the lines $\{z_2+\sqrt az_0=0\}$ and $\{z_2-\sqrt az_0=0\}$ under $\rho$, respectively.
Then
\begin{gather*}
  [D_8]=\ell_0-\ell_5,\ 
  [D_9]=\ell_0-\ell_4, 
\end{gather*}

Let $\Lambda\coloneqq\bigoplus_{i=1}^9\ZZ D_i$. Since the divisors classes $[D_2],\dots,[D_7]$ form a basis of $\Pic(\tS_{\Kbar})$, the kernel $\Lambda_0$ of the group homomorphism $\Lambda\to\Pic(\tS_{\Kbar})$ that sends each $D_i$ to its divisor class is a free group of rank 3 with basis
\begin{align*}
D'_1 &= D_1+D_4+2D_5-(D_2+D_6+D_7),\\
D'_8 &= D_8 - (D_2+D_3+D_4+D_5+D_6),\\
D'_9 &= D_9 - (D_2+D_3+D_4+D_5+D_7).
\end{align*}
Let $\Lambda_0\to\Kbar(\tS_{\Kbar})^\times$ be the group homomorphism defined by 
\begin{equation}\label{eq:coxringcharacter}
D'_1\mapsto\rho^*\left(\frac{z_0}{z_1}\right), \quad
D'_8\mapsto\rho^*\left(-\frac{z_2+\sqrt az_0}{2\sqrt az_1}\right), \quad
D'_9\mapsto\rho^*\left(\frac{z_2-\sqrt az_0}{2\sqrt az_1}\right).
\end{equation}
Then the $\Kbar$-algebra 
\begin{equation}\label{eq:cox_ring_id}
\Kbar[\eta_1,\dots,\eta_9]/(\eta_6\eta_9+\eta_7\eta_8+\eta_1\eta_3\eta_4^2\eta_5^3),
\end{equation}
endowed with the $\Pic(\tS_{\Kbar})$-grading induced by assigning degree 
$[D_i]$ to $\eta_i$ for $i\in\{1,\dots,9\}$, together with the character \eqref{eq:coxringcharacter}
gives a Cox ring of $\tS_{\Kbar}$ of type $\identity_{\Pic(\tS_{\Kbar})}$ in the sense of 
\cite[Proposition 4.4]{arXiv:1408.5358}.

We observe that the Galois action of $\Gal(\Kbar/K)$ on $\tS_{\Kbar}$
exchanges $D_6$ with $D_7$ and $[D_8]$ with $[D_9]$, while $D_1,\dots,D_5$ are
fixed.  Hence $\Pic(\tS)=\Pic(\tS_{\Kbar})^{\Gal(\overline K/K)}\cong\ZZ^5$
with basis $\ell_0,\dots,\ell_3,\ell_4+\ell_5$.

We compute a Cox ring of $\tS$ of type $\Pic(\tS)\hookrightarrow\Pic(\tS_{\Kbar})$
in the sense of \cite{arXiv:1408.5358}.

\begin{lemma}
  The $\Kbar$-algebra
  \begin{equation*}
    \overline R\coloneqq \Kbar[\zeta_1,\dots,\zeta_8]/
    (\zeta_1\zeta_8+\zeta_7(\zeta_7+\zeta_2^2\zeta_3\zeta_4^3\zeta_6)),
  \end{equation*}
  where $\zeta_i$ have degrees $[E_i]$ for
  \begin{gather*}
    E_1\coloneqq D_6+D_7, \ E_2\coloneqq D_4, \ E_3\coloneqq D_3, \
    E_4\coloneqq D_5, \ E_5\coloneqq D_2, \\
    E_6\coloneqq D_1, \ E_7\coloneqq D_6+D_9, \ E_8\coloneqq D_8+D_9,
  \end{gather*}
  is a Cox ring of $\tS_{\Kbar}$ of type
  $\Pic(\tS)\hookrightarrow\Pic(\tS_{\Kbar})$.
\end{lemma}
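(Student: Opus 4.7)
The plan is to identify $\overline R$ with the $\Pic(\tS)$-graded subalgebra
$\Rbar' \coloneqq \bigoplus_{m \in \Pic(\tS)} \Rbar^{\mathrm{id}}_m$ of the
identity-type Cox ring
$\Rbar^{\mathrm{id}}=\Kbar[\eta_1,\dots,\eta_9]/(\eta_6\eta_9+\eta_7\eta_8+\eta_1\eta_3\eta_4^2\eta_5^3)$
from~\eqref{eq:cox_ring_id}, where $\Pic(\tS)$ is viewed as the Galois-invariant
subgroup of $\Pic(\tS_\Kbar)$. By the framework of~\cite{arXiv:1408.5358},
this subalgebra is automatically a Cox ring of $\tS_\Kbar$ of type
$\Pic(\tS) \hookrightarrow \Pic(\tS_\Kbar)$, so it suffices to exhibit a
$\Pic(\tS)$-graded isomorphism $\overline R \cong \Rbar'$.

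I define a $\Kbar$-algebra homomorphism
$\phi\colon \Kbar[\zeta_1,\dots,\zeta_8] \to \Rbar^{\mathrm{id}}$ by
\[
\zeta_1 \mapsto \eta_6\eta_7,\ \zeta_2 \mapsto \eta_4,\ \zeta_3 \mapsto \eta_3,\ \zeta_4 \mapsto \eta_5,\ \zeta_5 \mapsto \eta_2,\ \zeta_6 \mapsto \eta_1,\ \zeta_7 \mapsto \eta_6\eta_9,\ \zeta_8 \mapsto \eta_8\eta_9.
\]
Using the class formulas of the preceding subsection, one checks that
$\deg \phi(\zeta_i) = [E_i] \in \Pic(\tS)$ for each $i$, so $\phi$ is graded
and lands in $\Rbar'$. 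Using the relation in $\Rbar^{\mathrm{id}}$,
\[
\phi\bigl(\zeta_1\zeta_8+\zeta_7(\zeta_7+\zeta_2^2\zeta_3\zeta_4^3\zeta_6)\bigr)
= \eta_6\eta_9\bigl(\eta_7\eta_8+\eta_6\eta_9+\eta_1\eta_3\eta_4^2\eta_5^3\bigr)=0,
\]
so $\phi$ descends to a $\Pic(\tS)$-graded homomorphism
$\overline R \to \Rbar'$, still denoted by $\phi$.

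For surjectivity I analyze the submonoid $M \subset \ZZ^9_{\ge 0}$ of exponent
vectors $(a_1,\dots,a_9)$ with $\sum a_i[D_i] \in \Pic(\tS)$: since the Galois
involution exchanges $\ell_4$ with $\ell_5$ and fixes $\ell_0,\dots,\ell_3$,
invariance reduces to the single equation $a_6+a_8=a_7+a_9$, and a
Hilbert-basis computation shows that $M$ is generated by $e_1,\dots,e_5$
together with $e_6+e_7,\ e_8+e_9,\ e_6+e_9,\ e_7+e_8$. Hence $\Rbar'$ is
generated as a $\Kbar$-algebra by the nine monomials
$\eta_1,\dots,\eta_5,\ \eta_6\eta_7,\ \eta_8\eta_9,\ \eta_6\eta_9,\ \eta_7\eta_8$;
the first eight lie directly in the image of $\phi$, while
$\eta_7\eta_8 = -\phi(\zeta_7)-\phi(\zeta_6\zeta_3\zeta_2^2\zeta_4^3)$ by the
relation in $\Rbar^{\mathrm{id}}$, so $\phi$ is surjective. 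For injectivity I
observe that $\overline R$ is an integral domain: its defining polynomial is
linear in $\zeta_8$ with coefficient $\zeta_1$ that does not divide the
$\zeta_8$-constant term $\zeta_7(\zeta_7+\zeta_2^2\zeta_3\zeta_4^3\zeta_6)$ in
$\Kbar[\zeta_1,\dots,\zeta_7]$, hence irreducible. Therefore
$\dim \overline R = 7 = \dim \tS + \rk \Pic(\tS) = \dim \Rbar'$, and a
surjection between integral domains of the same finite Krull dimension is an
isomorphism.

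The main technical step is the Hilbert-basis analysis of $M$ together with the
observation that the single monoid syzygy
$(e_6+e_7)+(e_8+e_9)=(e_6+e_9)+(e_7+e_8)$ among the four nontrivial generators,
combined with the ambient relation in $\Rbar^{\mathrm{id}}$, yields exactly
the single displayed relation of $\overline R$; once this is in place,
irreducibility and the dimension count close the argument.
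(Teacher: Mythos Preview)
Your proof is correct and follows essentially the same route as the paper: identify the $\Pic(\tS)$-graded subalgebra of the identity-type Cox ring via \cite{arXiv:1408.5358}, list its monomial generators, use the ambient relation to eliminate the ninth generator $\eta_7\eta_8$, and conclude that the resulting surjection from $\overline R$ is an isomorphism by irreducibility and a dimension count. Your explicit Hilbert-basis analysis of the monoid $M$ (via the condition $a_6+a_8=a_7+a_9$) just makes precise what the paper leaves implicit in its citation of \cite[Corollary~2.10, Definition~3.12]{arXiv:1408.5358}.
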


\begin{proof}
  According to \cite[Corollary 2.10, Definition~3.12]{arXiv:1408.5358}, every
  Cox ring of $\tS_{\Kbar}$ of type $\Pic(\tS)\hookrightarrow\Pic(\tS_{\Kbar})$ is
  isomorphic to the sub-$\Kbar$-algebra of \eqref{eq:cox_ring_id} generated by the
  monomials $\prod_{j=1}^9\eta_j^{e_j}$ with $e_1,\dots,e_9\in\ZZ_{\geq0}$
  such that $\sum_{i=1}^9e_j[D_j]\in\Pic(\tS)$, that is,
  \begin{gather*}
    \zeta_6\coloneqq \eta_1,\ \zeta_5\coloneqq \eta_2,\ \zeta_3\coloneqq
    \eta_3,\ \zeta_2\coloneqq \eta_4,\ \zeta_4\coloneqq \eta_5,\\
    \zeta_1\coloneqq \eta_6\eta_7, \ \zeta_8\coloneqq \eta_8\eta_9, \
    \zeta_7\coloneqq \eta_6\eta_9,\
    \zeta_9\coloneqq \eta_7\eta_8.
  \end{gather*}
  The relation in \eqref{eq:cox_ring_id} implies
  $\zeta_9=-\zeta_7-\zeta_2^2\zeta_3\zeta_4^3\zeta_6$ and
  $\zeta_1\zeta_8+\zeta_7(\zeta_7+\zeta_2^2\zeta_3\zeta_4^3\zeta_6)=0$.  The
  definition of the $\zeta_i$ gives a surjective map from $\overline R$ onto a
  Cox ring of $\tS_{\Kbar}$ of type
  $\Pic(\tS)\hookrightarrow\Pic(\tS_{\Kbar})$.  Since the relation defining
  $\overline R$ is irreducible and
  $\dim \overline R=\dim S_{\Kbar}+\rk \Pic(\tS)$, we conclude that
  $\overline R$ is a Cox ring of $\tS_{\Kbar}$ of the desired type.
\end{proof}

\begin{lemma}\label{lem:cox_ring_injective_type}
  Every Cox ring of $\tS$ of type $\Pic(\tS)\hookrightarrow\Pic(\tS_{\Kbar})$ is
  isomorphic to the $K$-algebra
  \begin{equation*}
    R\coloneqq K[\xi_1,\dots,\xi_8]/(\xi_1\xi_8+\xi_7^2-a\xi_2^4\xi_3^2\xi_4^6\xi_6^2),
  \end{equation*}
  with $\xi_i$ homogeneous of degree $E_i$ for all $i\in\{1,\dots,8\}$.
\end{lemma}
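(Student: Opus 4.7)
My plan is to construct an explicit Galois-equivariant $\Kbar$-algebra isomorphism $R\otimes_K\Kbar\cong\Rbar$ with $\Rbar$ as in the preceding lemma, and then invoke the uniqueness of Cox rings of a given type to conclude. First I would identify the semilinear $\Gal(\Kbar/K)$-action on $\Rbar$: geometrically the action exchanges $D_6$ and $D_7$, and from the explicit character map \eqref{eq:coxringcharacter} the substitution $\sqrt{a}\mapsto-\sqrt{a}$ swaps $D'_8$ with $D'_9$, so the sections $\eta_8$ and $\eta_9$ are swapped as well. Consequently the action fixes $\zeta_1,\dots,\zeta_6,\zeta_8$ and interchanges $\zeta_7=\eta_6\eta_9$ with $\zeta_9=\eta_7\eta_8$, while the defining relation of $\Rbar$ yields the identities $\zeta_7+\zeta_9=-\zeta_2^2\zeta_3\zeta_4^3\zeta_6$ and $\zeta_7\zeta_9=\zeta_1\zeta_8$.

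Guided by this, I would take
\begin{equation*}
\xi_i=\zeta_i\ \ (i\in\{2,3,4,5,6\}),\quad \xi_1=2\zeta_1,\quad \xi_8=2a\zeta_8,\quad \xi_7=\sqrt{a}\,(\zeta_7-\zeta_9),
\end{equation*}
each homogeneous of degree $E_i$. Here $\xi_7$ is Galois-invariant because the action flips $\sqrt{a}$ and swaps $\zeta_7$ with $\zeta_9$, and the remaining $\xi_i$ are visibly invariant. A direct computation using the two identities above gives
\begin{equation*}
\xi_1\xi_8+\xi_7^2=4a\,\zeta_1\zeta_8+a\bigl((\zeta_7+\zeta_9)^2-4\zeta_7\zeta_9\bigr)=a\,\zeta_2^4\zeta_3^2\zeta_4^6\zeta_6^2=a\,\xi_2^4\xi_3^2\xi_4^6\xi_6^2,
\end{equation*}
so the assignment extends to a graded $\Kbar$-algebra homomorphism $\varphi\colon R\otimes_K\Kbar\to\Rbar$, automatically Galois-equivariant because each $\xi_i$ lies in $R$.

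To verify that $\varphi$ is an isomorphism I would proceed as follows. Surjectivity is immediate since the formulas invert explicitly via $\zeta_1=\xi_1/2$, $\zeta_8=\xi_8/(2a)$, and $\zeta_7=\tfrac12\bigl(\xi_7/\sqrt{a}-\xi_2^2\xi_3\xi_4^3\xi_6\bigr)$. For injectivity it suffices to check that $R\otimes_K\Kbar$ is an integral domain of the same Krull dimension as $\Rbar$; viewed as a monic quadratic in $\xi_7$, the defining polynomial has constant term $\xi_1\xi_8-a\xi_2^4\xi_3^2\xi_4^6\xi_6^2$, which is of degree one in $\xi_1$ and therefore not a square in $\Kbar[\xi_1,\dots,\xi_6,\xi_8]$, so the relation is irreducible. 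Consequently $R$ is a $K$-form of $\Rbar$, hence a Cox ring of $\tS$ of the prescribed type, and the uniqueness of such Cox rings up to isomorphism, which follows from the framework of \cite{arXiv:1408.5358} by Galois descent, yields the claim.

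The main technical point in this plan is identifying the correct normalization of the lifts: the scalars $2$, $2a$, and $\sqrt{a}$ are forced by the dual requirements of Galois invariance of the $\xi_i$ and of obtaining precisely the factor $a$ (rather than, say, $a/4$) in front of the monomial $\xi_2^4\xi_3^2\xi_4^6\xi_6^2$ in the resulting relation. Any other choice either spoils the invariance of $\xi_7$ or produces a distorted defining polynomial.
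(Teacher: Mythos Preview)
Your proof is correct and follows essentially the same route as the paper's own argument: identify the Galois action on $\overline R$ (fixing $\zeta_1,\dots,\zeta_6,\zeta_8$ and swapping $\zeta_7\leftrightarrow\zeta_9$), write down Galois-invariant generators $\xi_i$, verify the relation, and invoke uniqueness from \cite{arXiv:1408.5358}. The only cosmetic difference is the normalization: the paper takes $\xi_1=\zeta_1$ and $\xi_8=4a\zeta_8$ rather than your $\xi_1=2\zeta_1$, $\xi_8=2a\zeta_8$; since only the product $\xi_1\xi_8=4a\zeta_1\zeta_8$ enters the relation, both choices work, so your closing claim that the scalars are ``forced'' is a slight overstatement (any splitting of the factor $4a$ between $\xi_1$ and $\xi_8$ is admissible).
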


\begin{proof}
  We observe that the action of $\Gal(K(\sqrt a)/K)$ on \eqref{eq:cox_ring_id}
  that fixes $\eta_1,\dots,\eta_5$, exchanges $\eta_6$ with $\eta_7$ and
  $\eta_8$ with $\eta_9$ turns \eqref{eq:cox_ring_id} and $\overline R$ into
  $\Gal(\Kbar/K)$-equivariant Cox rings of $\tS_{\Kbar}$ as in
  \cite[Definition 3.1]{arXiv:1408.5358}.  Let
  $\zeta_9\coloneqq -\zeta_7-\zeta_2^2\zeta_3\zeta_4^3\zeta_6$ as element of
  $\overline R$.  We observe that the elements $\zeta_7$ and $\zeta_9$ are
  exchanged by the Galois action, while all the other $\zeta_i$ are fixed. Let
  $\xi_i\coloneqq \zeta_i$ for $i\in\{1,\dots,6\}$ and consider the following
  change of variables:
  \begin{equation*}
    \xi_8\coloneqq 4a\zeta_8,\  \xi_9\coloneqq \zeta_7+\zeta_9,\
    \xi_7\coloneqq \sqrt a(\zeta_7-\zeta_9).
  \end{equation*}
  Then 
  \begin{equation*}
    \zeta_7=\frac{\xi_9}2+\frac{\xi_7}{2\sqrt a}, \
    \zeta_9=\frac{\xi_9}2-\frac{\xi_7}{2\sqrt a},
  \end{equation*}
  and $\xi_9=-\xi_2^2\xi_3\xi_4^3\xi_6$.
  So $4a\zeta_7\zeta_9=a\xi_9^2-\xi_7^2=a\xi_2^4\xi_3^2\xi_4^6\xi_6^2-\xi_7^2$. 
  
  Thus,
  \begin{equation*}
    \overline R=\Kbar[\xi_1,\dots,\xi_8]/(\xi_1\xi_8-a\xi_2^4\xi_3^2\xi_4^6\xi_6^2+\xi_7^2)
  \end{equation*}
  has $\Gal(\Kbar/K)$-invariant generators and relation. Hence it descend to a
  Cox ring of $\tS$ of type $\Pic(\tS)\hookrightarrow\Pic(\tS_{\Kbar})$.  Up to
  isomorphism, there is only one Cox ring of $\tS$ of type
  $\Pic(\tS)\hookrightarrow\Pic(\tS_{\Kbar})$ because the Galois action on
  $\Pic(\tS)$ is trivial (cf.~the comment after \cite[Proposition 3.7]{arXiv:1408.5358}).
\end{proof}

\subsection{A torsor over $\Kbar$}

The next lemma extends \cite[Remark~6]{MR2783385} to the Cox rings introduced in \cite{arXiv:1408.5358}.

\begin{lemma}\label{lem:bourqui}
  Let $X$ be a smooth surface over an algebraically closed field $k$. Let $\widetilde R$
  be a finitely generated Cox ring of $X$ of type $\lambda\colon M\to\Pic(X)$ such
  that $\lambda(M)$ contains an ample divisor class. Let $\eta_1,\dots,\eta_N$
  be homogeneous generators of $\widetilde R$. For every $i\in\{1,\dots,N\}$, let $D_i$
  be the divisor on $X$ defined by $\eta_i$. Then the open subset of
  $\Spec \widetilde R$ defined by $\prod_{j\notin J}\eta_j\neq0$ for all
  $J\subseteq\{1,\dots,N\}$ such that $\bigcap_{j\in J}D_j\neq\emptyset$ is a
  torsor of $X$ of type $\lambda$.
\end{lemma}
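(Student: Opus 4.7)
The plan is to identify the open subset $Y$ described in the lemma with the torsor of $X$ of type $\lambda$ provided by the general Cox-ring existence theorem [arXiv:1408.5358, Corollary~4.3] (in the form used in Construction~\ref{hypot3}). That theorem produces a torsor as the complement in $\Spec\widetilde R$ of the vanishing locus of some $M$-homogeneous monomials $f_1,\dots,f_\mt$; the present lemma extends [MR2783385, Remark~6] by giving an explicit choice of such monomials for smooth surfaces, namely $\prod_{j\notin J}\eta_j$ for each $J$ with $\bigcap_{j\in J}D_j\neq\emptyset$.

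First I would note that $Y$ is $\Gm^r$-invariant ($r\coloneqq\rk M$), which is immediate from the $M$-homogeneity of the defining monomials. For each valid $J$ (meaning $\bigcap_{j\in J}D_j\neq\emptyset$), set $U_J\coloneqq\Spec\widetilde R[(\prod_{j\notin J}\eta_j)^{-1}]$ and $V_J\coloneqq X\setminus\bigcup_{j\notin J}D_j$. Using the ampleness of a class in $\lambda(M)$ and the defining property of a Cox ring [arXiv:1408.5358, Proposition~4.4], I would identify the degree-$0$ part of $\widetilde R[(\prod_{j\notin J}\eta_j)^{-1}]$ with the coordinate ring of $V_J$, producing a map $U_J\to V_J$. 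Then I would check that the $V_J$ cover $X$: for any closed point $p\in X$, the set $J_p\coloneqq\{j:p\in D_j\}$ is valid (since $p\in\bigcap_{j\in J_p}D_j$), and $p\in V_{J_p}$ by construction.

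Next I would show that $U_J\to V_J$ is a trivial $\Gm^r$-torsor by verifying condition \eqref{picardgeneration} of Remark~\ref{rem:model_construction} locally on each $V_J$: the $\eta_j$ for $j\notin J$ are invertible in $\widetilde R[(\prod_{j\notin J}\eta_j)^{-1}]$ with degrees $[D_j]$, and since $V_J$ is affine and smooth, every class in $\lambda(M)$ is representable by a divisor supported on $\bigcup_{j\notin J}D_j$ (after multiplication by a principal divisor), and hence arises as the degree of an invertible homogeneous element. Gluing the resulting local trivializations over the cover $\{V_J\}$ gives the torsor structure on $Y\to X$, and Remark~\ref{udesck} (applied with $A=k$) confirms that its type is $\lambda$.

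The hard part will be the Picard-generation step: after inverting $\prod_{j\notin J}\eta_j$ on the smooth affine open $V_J$, showing that every element of $M$ is the degree of an invertible homogeneous element of $\widetilde R[(\prod_{j\notin J}\eta_j)^{-1}]$. This is where smoothness of $X$ is essential — the combinatorics of divisor intersections on a smooth surface is controlled (few subsets $J$ are valid, and $|J|\ge 3$ requires the divisors to concur at a single point), allowing one to adapt Bourqui's universal-torsor argument to arbitrary $\lambda$.
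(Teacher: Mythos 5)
Your plan diverges from the paper's proof in a way that introduces real gaps. The paper's argument is purely a comparison of open subsets of $\Spec\widetilde R$: it takes the torsor $Y=\Spec\widetilde R\smallsetminus V(\widetilde R_{[D]})$ already furnished by \cite[Corollary~4.3]{arXiv:1408.5358} for a very ample $[D]\in\lambda(M)$, observes that the preimage of $D_i$ in $Y$ is $Y\cap V(\eta_i)$, so that $\bigcap_{j\in J}D_j=\emptyset$ exactly when $\widetilde R_{[D]}\subseteq(\eta_j:j\in J)$, and then checks set-theoretically (using that $\widetilde R_{[D]}$ is spanned by monomials) that $V(\widetilde R_{[D]})$ coincides with the common zero locus of the monomials $\prod_{j\notin J}\eta_j$ over the valid $J$. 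No local trivialization, no smoothness of $V_J$, and no Picard-generation condition enters. This set-theoretic identification is the entire content of the lemma, and it is exactly the step missing from your write-up: you announce that you will ``identify'' the stated open set with the Corollary~4.3 torsor, but you never compare the two open sets; instead you try to rebuild the torsor structure from scratch by gluing charts $U_J\to V_J$.

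That reconstruction has two concrete problems. First, the claim that $V_J=X\smallsetminus\bigcup_{j\notin J}D_j$ is affine with coordinate ring the degree-$0$ part of $\widetilde R[(\prod_{j\notin J}\eta_j)^{-1}]$, and that these charts glue back to $X$, is asserted without proof; it is essentially the statement of Corollary~4.3 and cannot be taken for granted for this particular family of monomials. Second, your ``hard part'' is both unnecessary and incorrectly justified: smoothness and affineness of $V_J$ do not imply that every class in $\lambda(M)$ is represented by a divisor supported on $\bigcup_{j\notin J}D_j$ (the complement of a smooth $(1,1)$-curve in $\PP^1\times\PP^1$ is smooth and affine with Picard group $\ZZ$), and in general the classes $[D_j]$, $j\notin J$, need not generate $\lambda(M)$, so the torsor need not trivialize over $V_J$ at all --- Zariski-local triviality only holds over possibly smaller opens. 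Finally, appealing to Remark~\ref{udesck} to settle the type is circular: that remark operates within Construction~\ref{hypot3}, whose hypothesis is precisely that the complement of the chosen monomials over $\kbar$ is already known to be a torsor of type $\lambda$, which is what Lemma~\ref{lem:bourqui} is meant to establish. The fix is to abandon the chart-by-chart construction and instead prove the equality of open sets as above.
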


\begin{proof}
  Let $D$ be a very ample divisor on $X$ such that $[D]\in\lambda(M)$.  Let
  $Y=\Spec \widetilde R\smallsetminus V(\widetilde R_{[D]})$, where
  $\widetilde R_{[D]}$ is the degree-$[D]$-part of $\widetilde R$. Then $Y$ is
  a torsor of $X$ of type $\lambda$ by \cite[Corollary 4.3]{arXiv:1408.5358}.
  Let $\mathfrak{J}$ be the set of subsets $J\subseteq\{1,\dots,N\}$ such that
  $\bigcap_{j\in J}D_j=\emptyset$.  Since for every $i\in\{1,\dots,N\}$ the
  preimage of $D_i$ under $Y\to X$ is $Y\cap V(\eta_i)$, a subset
  $J\subseteq\{1,\dots,N\}$ belongs to $\mathfrak{J}$ if and only if
  $\widetilde R_{[D]}$ is contained in the ideal $(\eta_j:j\in J)$.  Hence,
  $\widetilde R_{[D]}\subseteq\bigcap_{J\in\mathfrak{J}}(\eta_j:j\in J)$.  We
  observe that
  $V(\prod_{j\notin J}\eta_j:J\notin\mathfrak{J})\subseteq
  V(\bigcap_{J\in\mathfrak{J}}(\eta_j:j\in J))$, because for every point
  $x\in V(\prod_{j\notin J}\eta_j:J\notin\mathfrak{J})
  =\bigcap_{J\notin\mathfrak{J}}\bigcup_{j\notin J}V(\eta_j)$, the set
  $J_x\coloneqq\{j\in\{1,\dots,N\}:x\in V(\eta_j)\}$ belongs to
  $\mathfrak{J}$.  Hence,
  $Y\subseteq\Spec \widetilde R\smallsetminus V(\prod_{j\notin
    J}\eta_j:J\notin\mathfrak{J})$.  For the other inclusion let
  $x\in \Spec \widetilde R\smallsetminus V(\prod_{j\notin J}\eta_j)$ for some
  $J\subseteq\{1,\dots,N\}$, $J\notin\mathfrak{J}$.  Since
  $\widetilde R_{[D]}\nsubseteq(\eta_j:j\in J)$, and $\widetilde R_{[D]}$ is
  generated by monomials, there exists $s\in \widetilde R_{[D]}$ such that
  $s(x)\neq0$. Hence $x\in Y$.
\end{proof}

By Lemma \ref{lem:bourqui} and the description of the negative curves on
$\tS_{\Kbar}$ from \cite[\S3.4]{math.AG/0604194}, the closed subset of
$\Spec R$ defined by the monomials
\begin{equation}\label{eq:f_i}
  \mon_{1,5,7}, \ \mon_{1,7,8},  \ \mon_{2,3}, \ \mon_{2,4}, \ \mon_{3,5},  \ \mon_{4,6}, \ \mon_{6,7,8},
\end{equation}
where
$\mon_{i_1,\dots,i_s}\coloneqq \prod_{1\leq j\leq7,j\notin\{
  i_1,\dots,i_s\}}\xi_j$, is the complement of a torsor of $\tS$ of type
$\Pic(\tS)\hookrightarrow\Pic(\tS_{\Kbar})$.

\subsection{A torsor over $\OO_K$}\label{sec:torsor}

Let $\pi\colon \Y\to\tSS$ be the $\OO_K$-model of the
torsor $Y\to\tS$ defined by
\begin{equation*}
  \mathcal{R}\coloneqq \OO_K[\xi_1,\dots,\xi_8]/(\xi_1\xi_8+\xi_7^2-a\xi_2^4\xi_3^2\xi_4^6\xi_6^2)
\end{equation*}
and the monomials \eqref{eq:f_i} as in
Construction~\ref{hypot3}.

\begin{prop}\label{prop:proj_model}
  The $\OO_K$-morphism $\pi\colon\Y\to\tSS$ is a torsor
  under $\GmO^5$, and the $\OO_K$-scheme $\tSS$ is
  projective.
\end{prop}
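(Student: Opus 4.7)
The proposition splits into two assertions---that $\pi\colon\Y\to\tSS$ is a torsor under $\GmO^5$, and that $\tSS$ is projective over $\OO_K$. Both follow from the machinery of Section~\ref{sec:integral_models} once we verify the relevant hypotheses on the pair $(\mathcal{R};f_1,\dots,f_7)$ of Construction~\ref{hypot3}.

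For the torsor property I plan to invoke Remark~\ref{rem:model_construction} and check the sufficient condition~\eqref{picardgeneration}: for each $i\in\{1,\dots,7\}$, the degrees of the homogeneous units of $\mathcal{R}[f_i^{-1}]$ should generate $\Pic(\tS)\cong\ZZ^5$. Each factor $\xi_j$ of $f_i$ is automatically invertible, contributing the degree $E_j$ recorded in Lemma~\ref{lem:cox_ring_injective_type}. In the basis $\{\ell_0,\ell_1,\ell_2,\ell_3,\ell_4+\ell_5\}$ of $\Pic(\tS)$, a direct linear-algebra computation will show that these degrees already span $\ZZ^5$ for six of the seven monomials. For the remaining one, $f_1=\xi_2\xi_3\xi_4\xi_6$, the four inverted variables yield degrees that generate only a rank-four sublattice, so one must produce a missing generator via the torsor relation $\xi_1\xi_8+\xi_7^2=a\xi_2^4\xi_3^2\xi_4^6\xi_6^2$ (whose right-hand side is a unit in $\mathcal{R}[f_1^{-1}]$ away from the primes $\p\mid a$), or else refine $D(f_1)$ by the Zariski cover $\{D(f_1\xi_1),D(f_1\xi_7),D(f_1\xi_8)\}$ on each piece of which an additional variable becomes invertible and the generation condition is recovered.

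For projectivity I plan to apply Proposition~\ref{prop:model_properties}\ref{it:model_projective}: since the $f_i$ in~\eqref{eq:f_i} carry different degrees in $\Pic(\tS)$, I would first replace them by a finite family of monomials of a common very ample degree $\m$ (for instance a basis of the degree-$\m$ piece of the ideal they generate, for $\m$ sufficiently ample), cutting out the same closed subscheme of $\Spec\mathcal{R}$ and hence the same $Y$ and $X$. The radical compatibility $\sqrt{C_{\Kbar}}\cap\OO_K[\xi_1,\dots,\xi_8]=\sqrt{C_A}$ is routine once $\mathcal{R}$ is known to be $\OO_K$-flat, and Proposition~\ref{prop:model_properties}\ref{it:model_projective} then yields projectivity of $\tSS$. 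The main obstacle is the verification of~\eqref{picardgeneration} at the primes $\p\mid a$, where the torsor equation degenerates modulo $\p$ and the argument via units breaks down; any residual difficulty is handled by the ``iff'' characterization of Remark~\ref{rem:model_construction} via flatness of $\pi$ and the isomorphism of $\psi$.
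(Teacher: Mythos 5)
Your outline matches the paper's: the torsor property is obtained from Remark~\ref{rem:model_construction} by checking condition \eqref{picardgeneration}, and projectivity from Proposition~\ref{prop:model_properties}\ref{it:model_projective}. But both halves contain gaps. For the torsor property, the obstruction you find at $\mon_{1,5,7}$ comes from reading the product defining $\mon_{i_1,\dots,i_s}$ as running over $1\le j\le 7$; it must run over $1\le j\le 8$ (this is what Lemma~\ref{lem:bourqui} produces, the generators being $\xi_1,\dots,\xi_8$, and it is forced by identities such as $\mon_{1,2,3,8}\xi_7^2=-\mon_{2,3}+a\mon_{1,7,8}\xi_2^3\xi_3\xi_4^6\xi_6^2\xi_7$, which require $\xi_8\mid\mon_{2,3}$). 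Thus $\mon_{1,5,7}=\xi_2\xi_3\xi_4\xi_6\xi_8$, and $\deg\xi_8=2\ell_0-(\ell_4+\ell_5)$ is exactly the missing fifth generator, so no workaround is needed. Neither of your proposed repairs would succeed: the homogeneous element $\xi_1\xi_8+\xi_7^2=a\xi_2^4\xi_3^2\xi_4^6\xi_6^2$ has degree $2\ell_0$, which already lies in the rank-four sublattice $\langle\ell_0,\ell_1,\ell_2,\ell_3\rangle$ spanned by $\deg\xi_2,\deg\xi_3,\deg\xi_4,\deg\xi_6$, so it cannot supply the direction $\ell_4+\ell_5$ (and it is invertible only where $a$ is); and the three open sets where $f_1\xi_1$, $f_1\xi_7$, $f_1\xi_8$ are invertible do not cover the locus $\xi_2\xi_3\xi_4\xi_6\ne0$ over $\OO_K$, since at a prime $\p\mid a$ the fiber contains points with $\xi_1=\xi_7=\xi_8=0$ and $\xi_2\xi_3\xi_4\xi_6\ne0$; replacing $f_1$ by these monomials would in any case change the model $\Y$ produced by Construction~\ref{hypot3}.

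For projectivity, the step you dismiss as routine is the substantive one. The equality $\sqrt{C_{K}}\cap\OO_K[\xi_1,\dots,\xi_8]=\sqrt{C_{\OO_K}}$ is not a consequence of $\OO_K$-flatness of $\mathcal{R}$: it says that the ideal generated over $\OO_K$ by $g$ and the degree-$A$ monomials acquires no extra components in special fibers, and the primes $\p\mid 2a$, where $g$ reduces to $\xi_1\xi_8+\xi_7^2$, are precisely where this could fail. The paper proves it by explicit syzygies identifying the radical of this ideal with that of the ideal $C'$ generated by $g$ and the monomials \eqref{eq:f_i}, over both $K$ and $\OO_K$, and then by exhibiting a Gr\"obner basis of $C'_{\OO_K}$ whose leading coefficients are units, which yields $C'_{K}\cap\OO_K[\xi_1,\dots,\xi_8]=C'_{\OO_K}$. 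Some computation of this kind is unavoidable, and your closing appeal to the ``if and only if'' statement in Remark~\ref{rem:model_construction} is not a substitute: flatness of $\pi$ and bijectivity of $\psi$ are exactly what would then have to be verified, not facts available for free.
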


\begin{proof}
  The first statement follows from Remark~\ref{rem:model_construction} because
  for each monomial in \eqref{eq:f_i} the degrees of the variables appearing
  in the monomial generate $\Pic(\tS)$.

  From \cite[Remark 4.2]{MR3552013}, the divisor class
  $A\coloneqq 9\ell_0-3\ell_1-2\ell_2-\ell_3-(\ell_4+\ell_5)$ is ample.  Let
  $C_{K}$ and $C_{\OO_K}$ be the ideals generated by $g \coloneqq
  \xi_1\xi_8+\xi_7^2-a\xi_2^4\xi_3^2\xi_4^6\xi_6^2$
  and the monomials of
  degree $A$ in $K[\xi_1,\dots,\xi_8]$ and in $\OO_K[\xi_1,\dots,\xi_8]$,
  respectively. Then the radicals of $C_{K}$ and $C_{\OO_K}$ are the radicals
  of the ideal generated by $g$ and the following monomials
  \begin{equation*}
    \mon_{1,2,3,8},\ \mon_{1,3,5}, \ \mon_{1,5,7}, \ \mon_{1,7,8},
    \ \mon_{2,3,7}, \ \mon_{2,4,8}, \ \mon_{3,5,7},  \ \mon_{4,6,8}, \ \mon_{6,7,8}
  \end{equation*}
  in $K[\xi_1,\dots,\xi_8]$ and in $\OO_K[\xi_1,\dots,\xi_8]$, respectively.

  The relations
  \begin{gather*}
    \mon_{1,2,3,8}\xi_7^2=-A_{2,3}+aA_{1,7,8}\xi_2^3\xi_3\xi_4^6\xi_6^2\xi_7,
    \ 
    \mon_{1,3,5}\xi_7^2=-A_{3,5}\xi_8+aA_{1,5}\xi_2^4\xi_3\xi_4^6\xi_6^2, \\
    \mon_{2,3,7}\xi_7^2=-A_{2,3}\xi_7+aA_{1,7,8}\xi_2^3\xi_3\xi_4^6\xi_6^2\xi_1\xi_8,
    \
    \mon_{2,4,8}\xi_7^2=-A_{2,4}\xi_1+aA_{6,7,8}\xi_2^3\xi_3^2\xi_4^5\xi_6^3\xi_7, \\
    \mon_{3,5,7}\xi_7^2=-A_{3,5}\xi_7+aA_{1,5,7}\xi_2^4\xi_3\xi_4^6\xi_6^2\xi_1,
    \
    \mon_{4,6,8}\xi_7^2=-A_{4,6}\xi_1+aA_{6,7,8}\xi_2^4\xi_3^2\xi_4^5\xi_6^2\xi_7,
  \end{gather*}
  show that $\sqrt{C_{K}}=\sqrt{C'_{K}}$ and
  $\sqrt{C_{\OO_K}}=\sqrt{C'_{\OO_K}}$, where $C'_{K}$ and $C'_{\OO_K}$ are
  the ideals generated by $g$ and the monomials \eqref{eq:f_i} in
  $K[\xi_1,\dots,\xi_8]$ and in $\OO_K[\xi_1,\dots,\xi_8]$, respectively.

  The  polynomials
  \begin{gather*}
    \xi_1\xi_8+\xi_7^2-a\xi_2^4\xi_3^2\xi_4^6\xi_6^2, \
    \xi_2\xi_3\xi_4\xi_6\xi_7^2-a\xi_2^5\xi_3^3\xi_4^7\xi_6^3, \
    \xi_2\xi_4\xi_6\xi_7^3-a\xi_2^5\xi_3^2\xi_4^7\xi_6^3\xi_7, \\
    A_{1,5,7},\ A_{1,7,8}, \ A_{6,7,8}, \ \xi_7A_{1,6,8}, \
    \xi_2\xi_3\xi_5\xi_7^3, \ \xi_3\xi_5\xi_6\xi_7^3, \
    \xi_4\xi_5\xi_6\xi_7^3,
  \end{gather*}
  generate the ideal $C'_{\OO_K}$, as
  \begin{gather*}
    A_{3,5}=\xi_2\xi_4\xi_6\xi_7 g-(\xi_2\xi_4\xi_6\xi_7^3-a\xi_2^5\xi_3^2\xi_4^7\xi_6^3\xi_7),\\
    A_{2,3}=\xi_4\xi_5\xi_6\xi_7 g -\xi_2\xi_4\xi_6\xi_7^3+a \xi_2^3\xi_3\xi_4^6\xi_6^2\xi_7 A_{1,7,8},\\
    A_{2,4}=\xi_3\xi_5\xi_6\xi_7 g-\xi_3\xi_5\xi_6\xi_7^3 + a\xi_2^2\xi_3^2\xi_4^5\xi_6\xi_7 A_{1,7,8},\\
    A_{4,6}=\xi_2\xi_3\xi_5\xi_7 g-\xi_2\xi_3\xi_5\xi_7^3+
    a\xi_2^4\xi_3^2\xi_4^5\xi_6^2\xi_7A_{1,7,8},
    \\
    \xi_2\xi_3\xi_4\xi_6\xi_7^2-a\xi_2^5\xi_3^3\xi_4^7\xi_6^3=\xi_2\xi_3\xi_4\xi_6 g-\xi_1A_{1,5,7},\\
    \xi_7A_{1,6,8}=\xi_2\xi_3\xi_4\xi_5
    g-\xi_8A_{6,7,8}+a\xi_2^4\xi_3^2\xi_4^6\xi_6A_{1,7,8},
  \end{gather*}
  and can be easily checked to form a Gr\"obner basis of $C'_{\OO_K}$ for the
  lexicographic monomial ordering with respect to $\xi_8>\dots>\xi_1$ via
  \cite[Algorithm 4.2.2]{MR1287608}.

  Since the leading coefficient of each polynomial in the Gr\"obner basis is
  invertible in $\OO_K$, $C'_{K}\cap\OO_K[\xi_1,\dots,\xi_8]=C'_{\OO_K}$ by
  \cite[Proposition 4.4.4]{MR1287608}. Hence
  $\sqrt{C_{K}}\cap\OO_K[\xi_1,\dots,\xi_8]=\sqrt{C_{\OO_K}}$, and the
  $\OO_K$-model $\tSS$ that we constructed above is projective by
  Proposition~\ref{prop:model_properties}\ref{it:model_smooth}.
\end{proof}

\section{Parameterization}\label{sec:parameterizationnew}

In this section, we parameterize the rational points on
$\tS$ by integral points on its torsors, we construct a
good fundamental domain for the action of $\Gm^5(\OO_K)$ and we perform a
M\"obius inversion.

\subsection{Passage to the split torsor}\label{sec:parameterization_concrete}

Under the identification $\Pic(\tS)\cong\ZZ^5$ given by the basis
$\ell_0,\dots,\ell_3,\ell_4+\ell_5$, the action of $\Gm^5(\OO_K)$ on
$Y(K)$ is given by
\begin{equation}\label{eq:action}
  \underline u*(a_1,\dots,a_8)=(\underline u^{m^{(1)}}a_1,\dots,\underline u^{m^{(8)}}a_8),
\end{equation} 
where 
\begin{gather*}
  m^{(1)}=(0,0,0,0,1),\ m^{(2)}=(0,0,1,-1,0),\ m^{(3)}=(0,1,-1,0,0),\\
  m^{(4)}=(0,0,0,1,0),\ m^{(5)}=(1,-1,0,0,-1),\ m^{(6)}=(1,-1,-1,-1,0),\\
  m^{(7)}=(1,0,0,0,0),\ m^{(8)}=(2,0,0,0,-1),
\end{gather*}
and $\underline u^{(m_0,\dots,m_4)}=\prod_{i=0}^4u_i^{m_i}$ for all
$\underline u=(u_0,\dots,u_4)\in(\OO_K^\times)^5$.

\begin{lemma}\label{lem:parameterization}
  For the surface $S$,
  \begin{equation*}
    N_{U,H}(B) = \frac{1}{|\mu_K|}
    \sum_{\cfrb = (\cfr_0,\cfr_1,\cfr_2,\cfr_3,\cfr_4) \in \Cs^5} |M_\cfrb(B)|,
  \end{equation*}
  where $M_\cfrb(B)$ is the set of all
  \begin{equation*}
    (a_1, \dots, a_8) \in (\OO_{1*}\times \dots \times \OO_{8*}) \cap \FF
  \end{equation*}
  with height condition 
  \begin{equation}
  \label{eq:height_condition}
    \prod_{v \mid \infty} \tN_v(\sigma_v(a_1,\dots,a_7)) \le u_\cfrb B,
  \end{equation}
  torsor equation
  \begin{equation}\label{eq:torsor}
    a_1a_8+a_7^2-aa_2^4a_3^2a_4^6a_6^2 = 0,
  \end{equation}
  coprimality conditions
  \begin{equation}\label{eq:coprimality}
    \afr_8+\afr_5 \!=\! \afr_7+\afr_2\afr_3\afr_4 \!=\! \afr_6+\afr_1\afr_2\afr_3\afr_5 \!=\!
    \afr_5+\afr_2\afr_4 \!=\! \afr_4+\afr_1\afr_3 \!=\! \afr_3+\afr_1 \!=\! \afr_2+\afr_1 \!=\! \OO_K.
  \end{equation}
   Here, 
   $\FF$ is a
  fundamental domain for the action of $U_K \times (\OO_K^\times)^4$
  on $(K^\times)^6 \times K^2$ described by \eqref{eq:action}, 
  $\OO_{j*} \coloneqq \OO_j^{\ne 0}$ for $j \in \{1, \dots, 6\}$ and
  $\OO_{j*} \coloneqq \OO_j$ for $j \in \{7,8\}$, where
  $\OO_j \coloneqq \cfrb^{m^{(j)}} = \prod_{i=0}^4\cfr_i^{m^{(j)}_i}$ 
  for all
  $j \in \{1, \dots, 8\}$. 
  For 
  $v \in \Omega_K$ and $(x_1,\dots,x_7) \in K_v^7$ with $x_1 \ne 0$, 
  \begin{equation}\label{eq:localheightconditions}
    \tN_v(x_1,\dots,x_7) \coloneqq \max\left.\begin{cases}
   \left|\frac{x_6(ax_2^4x_3^2x_4^6x_6^2-x_7^2)}{x_1}\right|_v, \left|x_2x_3x_4x_5x_6x_7\right|_v,\\
    \left|x_1^2x_2x_3^2x_5^3\right|_v, \left|x_2^3x_3^2x_4^4x_5x_6^2\right|_v,
    \left|x_1x_2^2x_3^2x_4^2x_5^2x_6\right|_v
	\end{cases}    
    \right\}.
  \end{equation}
  Finally,
   \begin{equation}\label{eq:u_c_def}
    u_\cfrb \coloneqq \N(\cfr_0^3\cfr_1^{-1}\cdots\cfr_4^{-1}),
  \end{equation}
  and $\afr_j \coloneqq a_j\OO_j^{-1}$ for all
  $j \in \{1, \dots, 8\}$.
\end{lemma}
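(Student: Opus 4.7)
The plan is to implement the split torsor parameterization from Section~\ref{sec:integral_models} on the integral model $\pi\colon\Y\to\tSS$ constructed in Section~\ref{sec:torsor}, following the strategy of \cite[\S5--6]{MR3552013} adapted to our nonuniversal torsor of type $\Pic(\tS)\hookrightarrow\Pic(\tS_{\Kbar})$. I would proceed in four steps.

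First, for each ideal class tuple $\cfrb=(\cfr_0,\dots,\cfr_4)\in\Cs^5$, I would produce the $\cfrb$-twist of $\pi\colon\Y\to\tSS$ by requiring the $j$-th coordinate to lie in the fractional ideal $\OO_j=\cfrb^{m^{(j)}}$ prescribed by the weights in the action~\eqref{eq:action}. Using the generalizations of \cite[Lemma 3.5, Proposition 3.6]{MR3552013} noted in Remark~\ref{rem:same_statements} together with Proposition~\ref{prop:proj_model}, each point of $\tS(K)$ corresponds to a unique class $\cfrb\in\Cs^5$ and, on the $\cfrb$-twist, to a $(\OO_K^\times)^5$-orbit of integral tuples $(a_1,\dots,a_8)\in\prod_j\OO_{j*}$ satisfying the torsor equation \eqref{eq:torsor} (which is the defining relation of the Cox ring $R$ from Lemma~\ref{lem:cox_ring_injective_type}).

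Second, I would restrict from $\tS(K)$ to $U(K)\subset S(K)$, which under $\phi\circ\pi$ pulls back to the open subset of $\Y$ cut out by the non-vanishing of the seven monomials in~\eqref{eq:f_i}; these monomials are identified via Lemma~\ref{lem:bourqui} and the negative-curve configuration of $\tS_{\Kbar}$ from \cite[\S3.4]{math.AG/0604194}. Rewriting ``$\mon_{i_1,\dots,i_s}\ne0$ at $(a_1,\dots,a_8)$'' in terms of the ideals $\afr_j=a_j\OO_j^{-1}$ yields exactly the seven coprimality relations in~\eqref{eq:coprimality}.

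Third, I would lift the exponential Weil height. In the basis $\ell_0,\dots,\ell_3,\ell_4+\ell_5$ of $\Pic(\tS)$, one has $-K_{\tS}=3\ell_0-\ell_1-\ell_2-\ell_3-(\ell_4+\ell_5)$, and the five-dimensional space of $\Pic(\tS)$-homogeneous elements of $R$ of this degree is spanned by $\xi_1^2\xi_2\xi_3^2\xi_5^3$, $\xi_1\xi_2^2\xi_3^2\xi_4^2\xi_5^2\xi_6$, $\xi_2^3\xi_3^2\xi_4^4\xi_5\xi_6^2$, $\xi_2\xi_3\xi_4\xi_5\xi_6\xi_7$ and $\xi_6\xi_8$. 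These correspond, up to a $K$-linear change, to the anticanonical sections $x_0,\dots,x_4$ realizing $\tS\to S\hookrightarrow\PP^4_K$; using the torsor relation to rewrite $\xi_6\xi_8=\xi_6(a\xi_2^4\xi_3^2\xi_4^6\xi_6^2-\xi_7^2)/\xi_1$ eliminates $\xi_8$ and produces the five monomials in~\eqref{eq:localheightconditions}. Multiplying local height factors over $v\in\Omega_K$, the nonarchimedean contributions collapse into the single norm $u_\cfrb^{-1}=\N(\cfr_1\cdots\cfr_4/\cfr_0^3)$ determined by the ideal classes on the $\cfrb$-twist, so the bound $H\le B$ becomes~\eqref{eq:height_condition}.

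Fourth, I would pass from $(\OO_K^\times)^5$-orbits to a fundamental domain. Since $\OO_K^\times=U_K\times\mu_K$, the subgroup $U_K\times(\OO_K^\times)^4$ has index $|\mu_K|$ in $(\OO_K^\times)^5$, so any fundamental domain $\FF$ meets every $(\OO_K^\times)^5$-orbit in exactly $|\mu_K|$ points and contributes the factor $1/|\mu_K|$. The main obstacle is the bookkeeping of the third step: verifying that the five explicit anticanonical monomials above match, up to reordering and the $K(\sqrt a)/K$-descent of the Cox ring presentation in Lemma~\ref{lem:cox_ring_injective_type}, the coordinates $x_0,\dots,x_4$ of~\eqref{eq:equationsSa}, and that the nonarchimedean contributions to $H$ recombine cleanly into the single norm constant $u_\cfrb$ on each twist. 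Everything else is a direct application of the general theory of Section~\ref{sec:integral_models} to our specific torsor from Section~\ref{sec:torsor}.
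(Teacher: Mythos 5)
Your proposal is correct and follows essentially the same route as the paper: the twisted integral models of the split torsor (Proposition~\ref{prop:proj_model} together with the machinery of \cite{MR3552013}) give the disjoint decomposition of $U(K)$ over $\cfrb\in\Cs^5$, integrality on the twists translates via the monomials \eqref{eq:f_i} into the coprimality conditions \eqref{eq:coprimality}, the anticanonical monomials \eqref{eq:anticanonical_sections} (with $\xi_8$ eliminated by the torsor relation) lift the height to $u_\cfrb^{-1}\prod_v\tN_v$, and the index-$|\mu_K|$ passage from $(\OO_K^\times)^5$ to $U_K\times(\OO_K^\times)^4$ produces the prefactor. The only slight imprecision is attributing the coprimality conditions to the restriction to $U$ — they come from integrality on the twisted torsor via \eqref{eq:f_i}, whereas restricting to $U$ is what forces $a_1,\dots,a_6\ne 0$ — but this does not affect the argument.
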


\begin{proof}
The map $\psi\colon Y\to\tS\to S\subseteq\PP^4_K$ that sends $(\xi_1,\dots,\xi_8)$ to
\begin{equation}\label{eq:anticanonical_sections}
(\xi_6\xi_8, 
\xi_2\xi_3\xi_4\xi_5\xi_6\xi_7,
    \xi_1^2\xi_2\xi_3^2\xi_5^3, 
    \xi_2^3\xi_3^2\xi_4^4\xi_5\xi_6^2,
    \xi_1\xi_2^2\xi_3^2\xi_4^2\xi_5^2\xi_6)
\end{equation}
is the composition of the chosen anticanonical embedding of $S$, the
minimal desingularization of $S$ and the torsor over $\tS$ from Section \ref{sec:dPsurfaces}.
The complement of the lines of $\PP^4_{\Kbar}$ contained in $S$ is 
\begin{equation*}
U=S\smallsetminus\{
x_4=x_1x_2=x_2x_3=x_1^2-ax_3^2=0\},
\end{equation*}
so $\psi^{-1}(U)=\{\xi_1\xi_2\xi_3\xi_4\xi_5\xi_6\neq0\}\subseteq\Spec R$.
Since $U$ is contained in the smooth locus of $S$, Proposition
\ref{prop:proj_model} and \cite[Theorem 2.7]{MR3552013} give
\begin{equation*}
  U(K)=\bigsqcup_{\cfrb\in\Cs^5}{}_{\cfrb}\pi({}_{\cfrb}\Y(\OO_K)\cap\psi^{-1}(U(K))),
\end{equation*}
where ${}_{\cfrb}\pi\colon{}_{\cfrb}\Y\to\tSS$ is a
$\cfrb$-twist of $\pi$ and ${}_{\cfrb}\Y(\OO_K)\cap\psi^{-1}(U(K))$
is the set of
$\underline a=(a_1, \dots, a_8) \in (\OO_{1*}\times \dots \times \OO_{8*})$
that satisfy \eqref{eq:torsor} and \eqref{eq:coprimality} because the radical
of the ideal of $\mathcal{R}$ generated by \eqref{eq:f_i} equals the radical
of the ideal
\begin{equation*}
  (\xi_8+\xi_5 )(\xi_7+\xi_2\xi_3\xi_4 )( \xi_6+\xi_1\xi_2\xi_3\xi_5 )(
  \xi_5+\xi_2\xi_4 )( \xi_4+\xi_1\xi_3 )( \xi_3+\xi_1 )(\xi_2+\xi_1).
\end{equation*}
Since the monomials \eqref{eq:f_i} belong to the radical of the ideal
generated by the monomials defining $\psi$,
\begin{equation*}
  H(\psi(a_1,\dots,a_8))=u_\cfrb^{-1}\prod_{v \mid \infty}
  \tN_v(\sigma_v(a_1,\dots,a_7))
\end{equation*}
(where we eliminate $a_8$ using (\ref{eq:torsor})) for all
$(a_1,\dots, a_8)\in {}_{\cfrb}\Y(\OO_K)$.
\end{proof}

\subsection{The fundamental domain}\label{sec:fundamental_domain}
Here we choose an explicit fundamental domain for Lemma \ref{lem:parameterization}.
Define $\Sigma$, $\delta$, $l$, $F$, $F(\infty)$, $F(B)$, $\Fs_1$ as in \cite[\S
5]{MR3552013}.
We recall that $\Fs_1\subseteq K^\times$ is a fundamental domain for the multiplicative action of $\OO_K^{\times}$ such that
\begin{equation}\label{eq:fundamental_domain_norm}
 | N(a')|^{\frac{d_v}{d}} \ll |a'|_v \ll |N(a')|^{\frac{d_v}{d}}
\end{equation}
for every $a' \in \Fs_1$ and every $v\mid\infty$.

For $\ab = (a_1,\dots, a_4) \in (K^\times)^4$ and $(x_{5v},x_{6v},x_{7v})\in (K_v^\times)^2 \times K_v$, we define
\begin{equation*}
  \tN_v(\ab;x_{5v},x_{6v},x_{7v})\coloneqq \tN_v(\sigma_v(a_1),\dots,\sigma_v(a_4),x_{5v},x_{6v},x_{7v}).
\end{equation*}
For $B\in \RR_{>0}\cup\{\infty\}$, let  $S_F(\ab;B)$ be the set
\begin{equation*}
  \bigwhere{(x_{5v},x_{6v},x_{7v})_v \in \prod_{v\mid\infty}
    (K_v^\times)^2 \times K_v}
  {\frac{1}{3}(\log \tN_v(\ab;x_{5v},x_{6v},x_{7v}))_v \in F(B^{\frac1{3d}})},
\end{equation*}
and 
\[
\Fs_0(\ab; B)\coloneqq \{(a_5,a_6,a_7) \in (K^\times)^2 \times K  
: \sigma(a_5,a_6,a_7) \in S_F(\ab;B)\}.
\]

As in \cite[\S5]{MR3552013}, the set
\[
\Fs\coloneqq \{(a_1,\dots,a_7) \in (K^\times)^6 \times K :
  \ab \in \Fs_1^4,\ (a_5,a_6,a_7) \in \Fs_0(\ab;\infty)\}\times K
\]
is a fundamental domain for the action of
$U_K \times (\OO_K^\times)^4$ on $(K^\times)^6 \times K^2$.

Since $F(B)=F+(-\infty, \log B]\delta$, where
$\delta=(d_v)_{v\in\Omega_{\infty}}$ and $F\subseteq\RR^{\Omega_{\infty}}$ is
contained in the hyperplane where the sum of the coordinates vanish,
$(\sigma_v(\ab),x_{5v},x_{6v},x_{7v})_v$ satisfies the height condition
\eqref{eq:height_condition} for all $(x_{5v},x_{6v},x_{7v})_v \in S_F(\ab,u_\cfrb B)$.

As we will see in Section \ref{sec:first_summation_completion}, the
contribution of the first summation to the main term consists of the volume of
$S_F(\ab;B)$, which we compute here.

For the next result, recall the definition of $\omega_v(S)$ in
Theorem~\ref{thm:number_fields}.

\begin{lemma}\label{lem:volume_S_F}
  With respect to the usual Lebesgue measure
  on $\prod_{v\mid\infty} K_v^3 = \RR^{3d}$, for $B > 0$, we have
  \begin{equation*}
    \vol(S_F(\ab;B)) = \frac{1}{3} \cdot 2^{r_1} \cdot
    \left(\frac{\pi}{4}\right)^{r_2}\cdot R_K  \cdot
    \left(\prod_{v\mid\infty} \omega_v(S)\right) \cdot \frac{B}{|N(a_2a_3a_4)|}.
  \end{equation*}
\end{lemma}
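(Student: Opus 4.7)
The plan is to reduce the volume integral to a product of one-variable integrals by three successive changes of variables.

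First, to eliminate the $a_i$-dependence, for each $v\mid\infty$ set
\begin{equation*}
y_{5v}\coloneqq\sigma_v(a_1^{2/3}a_2^{1/3}a_3^{2/3})x_{5v},\
y_{6v}\coloneqq\sigma_v(a_1^{-1/3}a_2^{4/3}a_3^{2/3}a_4^2)x_{6v},\
y_{7v}\coloneqq\sigma_v(a_1^{-1/3}a_2^{-2/3}a_3^{-1/3}a_4^{-1})x_{7v},
\end{equation*}
taking real cube roots at real places. Matching monomials one by one between (\ref{eq:localheightconditions}) and the $N_v$ of Theorem \ref{thm:number_fields} shows $\tN_v(\ab;x_{5v},x_{6v},x_{7v})=N_v(y_{5v},y_{6v},y_{7v})$, and the three rescalings multiply to $\sigma_v(a_2a_3a_4)$, giving the total archimedean Jacobian $\prod_{v\mid\infty}|\sigma_v(a_2a_3a_4)|_v=|N(a_2a_3a_4)|$. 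Writing $V_v\coloneqq\vol\{y_v\in K_v^3:N_v(y_v)\le1\}$ and using $V_v=\tfrac{2}{3}\omega_v(S)$ for real $v$ and $V_v=\tfrac{\pi}{12}\omega_v(S)$ for complex $v$, the claim reduces to
\begin{equation*}
\vol(S_F'(B))=3^{r_1+r_2-1}R_KB\prod_{v\mid\infty}V_v,\quad S_F'(B)\coloneqq\{(y_v)_v:(\tfrac{1}{3}\log N_v(y_v))_v\in F(B^{1/(3d)})\},
\end{equation*}
since $3^{r_1+r_2-1}\cdot(2/3)^{r_1}(\pi/12)^{r_2}=\tfrac{1}{3}\cdot 2^{r_1}(\pi/4)^{r_2}$.

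Second, every monomial in the definition of $N_v$ has total weight $3$, so $N_v(\lambda y)=|\lambda|_v^3N_v(y)$, whence $\vol\{y_v\in K_v^3:N_v(y_v)\le T\}=T\,V_v$. Since the condition cutting out $S_F'(B)$ depends only on the scalars $T_v\coloneqq N_v(y_v)$, a disintegration yields
\begin{equation*}
\vol(S_F'(B))=\prod_{v\mid\infty}V_v\cdot\int_{\RR_{>0}^{r_1+r_2}}\mathbf{1}\!\left[\left(\tfrac{1}{3}\log T_v\right)_v\in F(B^{1/(3d)})\right]\prod_{v\mid\infty}dT_v.
\end{equation*}

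Finally, put $u_v\coloneqq\log T_v$ and decompose $(u_v)_v=3\vec f+3t\delta$ with $\vec f$ in the hyperplane containing $F$ and $t\in\RR$. The condition becomes $\vec f\in F$ and $t\le\tfrac{1}{3d}\log B$, while the defining linear relation of the hyperplane forces $\prod_vdT_v=e^{\sum_v u_v}\prod_vdu_v=e^{3dt}\prod_vdu_v$, so the $t$-integral is $\int_{-\infty}^{(\log B)/(3d)}e^{3dt}\,dt=B/(3d)$. The remaining $\vec f$-integral over $F$, together with the $3^{r_1+r_2}$ from rescaling $\vec u=3\vec f+3t\delta$ and the Jacobian of the direct-sum decomposition $\RR^{r_1+r_2}=H\oplus\RR\delta$, reduces to the standard regulator computation used in \cite[\S5]{MR3552013} and produces $3d\cdot R_K$, combining with $B/(3d)$ to give the required $3^{r_1+r_2-1}R_K B$. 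The main obstacle is the bookkeeping in this last step: checking that the constants from the rescaling, from the hyperplane splitting, and from the normalization of $\vol(F)$ combine to yield exactly $R_K$ with no stray factor. This is purely linear algebra, parallel to analogous computations in \cite[\S5--6]{MR3552013}, and requires no analytic estimate.
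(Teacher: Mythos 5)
Your argument is correct and is essentially the proof the paper has in mind: the paper's own proof of this lemma is the single sentence that it is analogous to \cite[Lemma~5.1]{MR3552013}, and that computation proceeds exactly as yours does (rescale to absorb $\ab$, use the degree-$3$ homogeneity of $N_v$ to disintegrate over $T_v=N_v(y_v)$, then the regulator computation on $F(B^{1/(3d)})$). I checked the exponents in your rescaling against the monomials of $\tN_v$ and $N_v$ (including the relation $\lambda_6/\lambda_7=a_2^2a_3a_4^3$ needed for the term $ay_6^2-y_7^2$, which requires choosing the cube roots consistently at complex places), the Jacobian $|N(a_2a_3a_4)|$ (recalling that at a complex place Lebesgue measure scales by $|\lambda|_v=|\lambda|_\CC^2$), and the identity $3^{r_1+r_2-1}(2/3)^{r_1}(\pi/12)^{r_2}=\tfrac13\,2^{r_1}(\pi/4)^{r_2}$; all are right. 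The only blemish is in the final bookkeeping sentence: taking $F$ to be the parallelepiped spanned by $\ell(u_1),\dots,\ell(u_{r_1+r_2-1})$ with $\ell(u)=(\log|u|_v)_v$, the Jacobian of $(\vec f,t)\mapsto \vec f+t\delta$ equals $dR_K$ (add all rows to one row to see the determinant is $d$ times the regulator determinant), so the substitution $u=3w$ contributes $3^{r_1+r_2}$, the decomposition contributes $dR_K$, and the $t$-integral contributes $B/(3d)$, giving $3^{r_1+r_2}\cdot dR_K\cdot B/(3d)=3^{r_1+r_2-1}R_KB$ as required; the combined intermediate factor is $3^{r_1+r_2}dR_K$, not $3d\cdot R_K$ as you wrote, though the final total you state is the correct one.
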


\begin{proof}
  This is analogous to \cite[Lemma~5.1]{MR3552013}.
\end{proof}

\subsection{M\"obius inversion}\label{sec:moebius}
In this section, we deal with the coprimality conditions in
\eqref{eq:coprimality} that involve $a_5,\dots,a_8$. The remaining ones are
encoded in the arithmetic function $\vartheta_0$ defined below. We also
replace the torsor equation \eqref{eq:torsor} by a sum over congruence classes
for a parameter $\rho$ as in \eqref{eq:eta} and eliminate the variable $a_8$.

Recall the notation introduced in Lemma \ref{lem:parameterization}.
We write
\begin{equation*}
  \ab = (a_1,\dots,a_4),\qquad \OO_* = \OO_{1*}\times\dots\times\OO_{4*},
  \qquad \afrb = (\afr_1,\dots,\afr_4).
\end{equation*}
Let
\begin{equation*}
  \vartheta_0(\afrb)\coloneqq
  \begin{cases}
    1, & \afr_4+\afr_1\afr_3 = \afr_3+\afr_1 = \afr_2+\afr_1 = \OO_K,\\
    0, & \text{else.}
  \end{cases}
\end{equation*}
Let $\dfrb = (\dfr_{56},\dfr_{58},\dfr_5,\dfr_6,\dfr_7)$ be a 5-tuple of
nonzero ideals. Let
\begin{equation*}
  \mu_K(\dfrb)\coloneqq\mu_K(\dfr_{56})\mu_K(\dfr_{58})\mu_K(\dfr_5)\mu_K(\dfr_6)\mu_K(\dfr_7).
\end{equation*}
For fixed $\cfrb\in\Cs^5$, $\ab$, $\dfrb$ as above, 
let $\gfr\coloneqq \dfr_{58}\afr_1+a\OO_K$, and let 
$\gfr'$ be the unique ideal with
$v_\p(\gfr') = \lceil v_\p(\gfr)/2 \rceil$ for all prime ideals $\p$.
Define the fractional ideals
\begin{equation}\label{eq:def_bfr}
  \bfr_5 \coloneqq  \dfr_5(\dfr_{56}\cap\dfr_{58})\OO_5,\qquad
  \bfr_6 \coloneqq  \dfr_6\dfr_{56}\OO_6,\qquad 
  \bfr_7\coloneqq \afr_1\dfr_7\dfr_{58}\gfr^{-1}\gfr'\OO_7.
\end{equation}
For $\rho\in\OO_K$, let
$\Gs(\cfrb,\ab,\dfrb,\rho)$ be the additive subgroup of $K^3$
consisting of all $(a_5,a_6,a_7)$ with $a_5 \in \bfr_5$ and $a_6 \in \bfr_6$ and
$a_7 \in \gamma_7\cdot a_6+\bfr_7$ with
$\gamma_7\in \OO_K$ satisfying 
\begin{equation*}
  \congr{\gamma_7}{0}{\gfr'\dfr_7\cfr_1\cfr_2\cfr_3},\qquad
  \congr{\gamma_7}{\rho a_2^2a_3a_4^3}{\dfr_{58}\afr_1\gfr^{-1}\gfr'\cfr_1\cfr_2\cfr_3}.
\end{equation*}
Note that such a $\gamma_7$ exists if $\dfr_7+\dfr_{58}\afr_1=\OO_K$, as
$\dfr_7\cfr_1\cfr_2\cfr_3+\dfr_{58}\afr_1\cfr_1\cfr_2\cfr_3=\cfr_1\cfr_2\cfr_3$ divides $a_2^2a_3a_4^3$.

The following technical lemma is similar to \cite[Lemma~5.6]{MR3269462}.

\begin{lemma}\label{lem:congruence}
  Let $\afr$ be an ideal and $\ffr$ a nonzero fractional ideal of $\OO_K$. Let
  $y_1,y_2 \in \ffr$ with $y_1\ffr^{-1}+\afr = \OO_K$. Then $y_2/y_1$ is
  defined modulo $\afr$ and, for $x \in \OO_K$, we have
  \begin{equation*}
    \congr{xy_1}{y_2}{\afr\ffr} \qquad \Longleftrightarrow \qquad \congrfr{x}{y_2/y_1}{\afr}.
  \end{equation*}
\end{lemma}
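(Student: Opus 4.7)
The plan is to verify both assertions prime by prime, translating them into valuative inequalities and invoking the hypothesis $y_1\ffr^{-1}+\afr=\OO_K$ only where it is needed.

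For the first assertion, that $y_2/y_1$ is defined modulo $\afr$, I fix a prime $\p\mid\afr$. The coprimality hypothesis forces $v_\p(y_1\ffr^{-1})=0$, so $v_\p(y_1)=v_\p(\ffr)$. Combined with $y_2\in\ffr$, which gives $v_\p(y_2)\ge v_\p(\ffr)$, this yields $v_\p(y_2/y_1)\ge 0$, which is precisely the paper's definition of ``defined modulo $\afr$''.

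For the equivalence, I interpret $\congr{xy_1}{y_2}{\afr\ffr}$ as $xy_1-y_2\in\afr\ffr$, that is, $v_\p(xy_1-y_2)\ge v_\p(\afr)+v_\p(\ffr)$ for every prime $\p$. For $\p\nmid\afr$ this reduces to $v_\p(xy_1-y_2)\ge v_\p(\ffr)$, which holds automatically since $xy_1$ and $y_2$ both lie in $\ffr$ (using $x\in\OO_K$ and $y_1\in\ffr$). For $\p\mid\afr$, I use $v_\p(y_1)=v_\p(\ffr)$ established above to factor
\[
v_\p(xy_1-y_2)=v_\p(y_1)+v_\p(x-y_2/y_1)=v_\p(\ffr)+v_\p(x-y_2/y_1),
\]
so that the condition at $\p$ becomes $v_\p(x-y_2/y_1)\ge v_\p(\afr)$. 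Taking the conjunction over all $\p\mid\afr$ yields exactly $\congrfr{x}{y_2/y_1}{\afr}$, which establishes the desired equivalence.

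The argument is short and I expect no real obstacle; the only thing to watch is that primes $\p\nmid\afr$ must not contribute a spurious constraint on $x$, which is why it is essential that both $xy_1$ and $y_2$ already lie in $\ffr$ before any congruence is imposed. The role of $y_1\ffr^{-1}+\afr=\OO_K$ (rather than merely $y_1\ne 0$) is to pin down $v_\p(y_1)$ to equal $v_\p(\ffr)$ at every prime dividing $\afr$; this single fact powers both halves of the lemma, making $y_2/y_1$ $\afr$-integral and enabling the factorization of $xy_1-y_2$ that converts a congruence modulo $\afr\ffr$ into one modulo $\afr$.
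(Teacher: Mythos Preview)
Your proof is correct and follows essentially the same valuative approach as the paper's own proof: both reduce to checking $v_\p(y_1)=v_\p(\ffr)$ at primes $\p\mid\afr$ from the coprimality hypothesis, then factor $xy_1-y_2=y_1(x-y_2/y_1)$ to translate the congruence. Your version is simply more explicit in separating the cases $\p\mid\afr$ and $\p\nmid\afr$, whereas the paper compresses the latter verification into the phrase ``by our assumptions''.
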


\begin{proof}
  The quotient $y_2/y_1$ is invertible modulo $\afr$ since
  $v_\p(y_1) = v_\p(\ffr) \le v_\p(y_2)$ for every $\p \mid
  \afr$. Furthermore, $\congr{xy_1}{y_2}{\afr\ffr}$ if and only if
  $v_\p(x-y_2/y_1) \ge v_\p(\afr)-v_\p(y_1\ffr^{-1})$, which is equivalent to
  $\congrfr{x}{y_2/y_1}{\afr}$ by our assumptions.
\end{proof}

\begin{lemma}\label{lem:moebius}
  Let $\cfrb \in \Cs^5$ and $B \ge 0$.  With $\Fs_1,\Fs_0(\ab;u_\cfrb B)$ as
  in Section \ref{sec:fundamental_domain} and $\Gs(\cfrb,\ab,\dfrb,\rho)$
  defined above, we have
  \begin{equation*}
    |M_\cfrb(B)| = \sum_{\ab \in \Fs_1^4 \cap \OO_*} \vartheta_0(\afrb)
    \sum_{\dfrb : (\ref{eq:moebius_1}),(\ref{eq:moebius_2})} \mu_K(\dfrb)
    \sum_{\rho:(\ref{eq:rho})} |\Gs(\cfrb,\ab,\dfrb,\rho)\cap \Fs_0(\ab;u_\cfrb B)|,
  \end{equation*}
  where the second sum runs over 5-tuples of ideals $\dfrb$ satisfying
\begin{equation}\label{eq:moebius_1}
  \dfr_{56} + \afr_1\afr_2\afr_3\afr_4 = \dfr_{58} + \afr_2\afr_3\afr_4 = \OO_K,
  \quad \forall\p: 2\nmid v_\p(a) \implies v_\p(\dfr_{58}\afr_1) \le v_\p(a),
\end{equation}
\begin{equation}\label{eq:moebius_2}
  \dfr_5 \mid \afr_2\afr_4,\qquad \dfr_6 \mid \afr_1\afr_2\afr_3,
  \qquad \dfr_7 \mid \afr_2\afr_3\afr_4,
\end{equation}
and the third sum runs over 
\begin{equation}\label{eq:rho}
  \rho \pmod{\dfr_{58}\afr_1\gfr^{-1}\gfr'} :
  \ \rho\OO_K+\dfr_{58}\afr_1\gfr^{-1}\gfr'=\gfr',
  \ \congr{\rho^2}{a}{\dfr_{58}\afr_1}.
\end{equation}
\end{lemma}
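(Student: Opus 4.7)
My plan is to transform $|M_\cfrb(B)|$ from Lemma \ref{lem:parameterization} through four stages: (i) isolate the conditions on $a_1,\dots,a_4$; (ii) M\"obius-invert the remaining coprimality conditions involving $a_5,\dots,a_8$; (iii) eliminate $a_8$ using the torsor equation \eqref{eq:torsor}; (iv) parameterize the resulting quadratic congruence in $a_7$ by the variable $\rho$.

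For (i), the three conditions $\afr_4+\afr_1\afr_3=\afr_3+\afr_1=\afr_2+\afr_1=\OO_K$ in \eqref{eq:coprimality} depend only on $a_1,\dots,a_4$ and therefore factor out as $\vartheta_0(\afrb)$. For (ii), using that $\afr+\bfr\cfr=\OO_K$ in the Dedekind ring $\OO_K$ is equivalent to $\afr+\bfr=\afr+\cfr=\OO_K$, I split $\afr_6+\afr_1\afr_2\afr_3\afr_5=\OO_K$ into the pair $\afr_6+\afr_1\afr_2\afr_3=\OO_K$ and $\afr_6+\afr_5=\OO_K$. Applying $[\afr+\bfr=\OO_K]=\sum_{\dfr\mid\afr,\,\dfr\mid\bfr}\mu_K(\dfr)$ to each of the five resulting conditions introduces the variables $\dfr_{58},\dfr_7,\dfr_6,\dfr_{56},\dfr_5$ with the divisibilities recorded in \eqref{eq:moebius_2}. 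The extra coprimalities in \eqref{eq:moebius_1} are obtained by absorbing common prime factors between $\dfr_{56}$ and $\afr_1\afr_2\afr_3\afr_4$ (respectively between $\dfr_{58}$ and $\afr_2\afr_3\afr_4$) into the remaining squarefree variables, which leaves the joint M\"obius sum invariant. Collecting the three divisibilities on $a_5$ gives $a_5\in\bfr_5=\dfr_5(\dfr_{56}\cap\dfr_{58})\OO_5$, and the two divisibilities on $a_6$ give $a_6\in\bfr_6=\dfr_6\dfr_{56}\OO_6$; the pairwise coprimalities ensure these products equal the corresponding least common multiples.

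For (iii), the torsor equation \eqref{eq:torsor} determines $a_8=(a a_2^4 a_3^2 a_4^6 a_6^2-a_7^2)/a_1\in K$; the conditions $a_8\in\OO_8$ and $\dfr_{58}\mid\afr_8$ together amount to the single congruence
\begin{equation*}
  a_7^2 \equiv a\gamma^2 \pmod{\qfr\cdot\cfr_0^2}
\end{equation*}
(cleared of fractional-ideal denominators), where I set $\qfr\coloneqq\dfr_{58}\afr_1$ and $\gamma\coloneqq a_2^2 a_3 a_4^3 a_6$, so that $\gamma^2=a_2^4 a_3^2 a_4^6 a_6^2$.

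For (iv), I analyze this congruence prime by prime. At each $\p$, the number of residues $\rho\pmod{\qfr\gfr^{-1}\gfr'}$ satisfying $\rho\OO_K+\qfr\gfr^{-1}\gfr'=\gfr'$ and $\rho^2\equiv a\pmod{\qfr}$ is exactly $\eta(\qfr;a)$ from \eqref{eq:eta}; by Lemma \ref{lem:eta_properties} this vanishes whenever $v_\p(a)$ is odd and $v_\p(\qfr)>v_\p(a)$, so the sum may be restricted to $\dfrb$ satisfying \eqref{eq:moebius_1}. For each surviving $\rho$ and each admissible $a_6$, the set of $a_7$ solving the congruence is a coset $\gamma_7 a_6+\bfr_7$ where $\gamma_7\in\OO_K$ is determined by the two congruences in the definition of $\Gs(\cfrb,\ab,\dfrb,\rho)$: the first, $\congr{\gamma_7}{0}{\gfr'\dfr_7\cfr_1\cfr_2\cfr_3}$, encodes the integrality $\gamma_7 a_6\in\OO_7$ (which forces divisibility by $\cfr_1\cfr_2\cfr_3=\cfr_0\OO_6^{-1}$) together with the divisibilities by $\dfr_7$ and $\gfr'$, while the second encodes $\gamma_7\equiv\rho\cdot a_2^2 a_3 a_4^3\pmod{\qfr\gfr^{-1}\gfr'}$, i.e., $a_7\equiv\rho\gamma$ modulo the appropriate modulus. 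The width $\bfr_7=\afr_1\dfr_7\dfr_{58}\gfr^{-1}\gfr'\OO_7$ of the coset is exactly the modulus of this congruence lifted to $\OO_7$. The main technical obstacle is the case analysis at the primes $\p\mid 2a$ with $v_\p(a)$ even, where Hensel's lemma does not apply directly and the definition of $\gfr'$ as the ceiling of valuations of $\gfr$ is precisely what parameterizes the extra ambiguity in square-root extraction; mercifully, Lemma \ref{lem:eta_properties} has already packaged this local bookkeeping into the function $\eta$.
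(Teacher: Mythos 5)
Your outline follows the same route as the paper's proof: factor out $\vartheta_0$, perform the five M\"obius inversions, eliminate $a_8$ via the torsor equation to obtain the quadratic congruence $\congr{a_7^2}{aa_2^4a_3^2a_4^6a_6^2}{\dfr_{58}\afr_1\cfr_0^2}$, and parameterize its solutions by $\rho$. The skeleton, the moduli, and the identifications of $\bfr_5,\bfr_6,\bfr_7$ are all correct.

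The gap is in your step (iv). The assertion that the solution set of the quadratic congruence in $a_7$ decomposes as a \emph{disjoint} union, over the $\rho$ in (\ref{eq:rho}), of cosets $\gamma_7a_6+\bfr_7$ is the entire nontrivial content of the lemma, and you do not prove it: you defer to Lemma~\ref{lem:eta_properties}, which only computes the number of admissible $\rho$ and says nothing about the correspondence between those $\rho$ and the actual solutions $a_7$. What must be shown is a two-way claim: (a) every $a_7$ satisfying the quadratic congruence determines a \emph{unique} $\rho \pmod{\dfr_{58}\afr_1\gfr^{-1}\gfr'}$ with $\congr{a_7}{\rho a_2^2a_3a_4^3a_6}{\dfr_{58}\afr_1\gfr^{-1}\gfr'\cfr_0}$ and $\congr{\rho^2}{a}{\dfr_{58}\afr_1}$, and (b) conversely every $a_7$ in such a coset satisfies the quadratic congruence. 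Neither direction is formal. Dividing by $A=a_2^2a_3a_4^3a_6$ requires Lemma~\ref{lem:congruence}; passing between the linear congruence modulo $\dfr_{58}\afr_1\gfr^{-1}\gfr'\cfr_0$ and the quadratic one modulo $\dfr_{58}\afr_1\cfr_0^2$ requires the squaring lemma \cite[Lemma~5.7]{MR3269462}, which upgrades the modulus to $\dfr_{58}\afr_1\gfr^{-1}\gfr'^2\cfr_0^2$ — and only because $\gfr'^2\subseteq\gfr$ does this contain the required modulus; and one must check separately that $\congr{\rho^2}{a}{\dfr_{58}\afr_1}$ already forces $\rho\OO_K+\dfr_{58}\afr_1\gfr^{-1}\gfr'=\gfr'$, so that the support condition in (\ref{eq:rho}) is not an additional constraint. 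This is exactly where the definition of $\gfr'$ does its work, and it cannot be outsourced to the local computation of $\eta(\p^k;a)$. A smaller gloss in step (ii): the restrictions $\dfr_{56}+\afr_1\afr_2\afr_3\afr_4=\dfr_{58}+\afr_2\afr_3\afr_4=\OO_K$ are legitimate not by ``absorbing common prime factors'' but because the counted sets vanish otherwise — e.g.\ a common prime of $\dfr_{58}$ and $\afr_3$ contradicts the congruence together with $\afr_7+\afr_2\afr_3\afr_4=\OO_K$ — which is a fact about the congruence, not a rearrangement of the M\"obius sum.
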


\begin{proof}
  For fixed $\ab$ with $\vartheta_0(\afrb)=1$ and $B>0$, 
  let $\Fs_0\coloneqq \Fs_0(\ab;u_\cfrb B)$, and
  \begin{equation*}
    \tilde M = \tilde M(\cfrb,\ab,B)\coloneqq |\{(a_5, \dots, a_8) : (a_1, \dots, a_8) \in M_\cfrb(B)\}|.
  \end{equation*}
  We apply M\"obius inversions: For $\afr_5+\afr_8=\OO_K$, we
  introduce $\dfr_{58}$. This gives
  \begin{equation*}
    \tilde M = \sum_{\dfr_{58} \in \Is_K} \mu_K(\dfr_{58}) \left|\left\{
        \begin{aligned}
          &(a_5,\dots, a_8) \in
          ((\dfr_{58} \OO_5 \times \OO_6 \times \OO_7) \cap \Fs_0) \times \dfr_{58}\OO_8\\
          &(\ref{eq:torsor}),\ \afr_7+\afr_2\afr_3\afr_4 = \afr_6+\afr_1\afr_2\afr_3\afr_5 =
          \afr_5+\afr_2\afr_4 = \OO_K
        \end{aligned}
      \right\}\right|.
  \end{equation*}
  
  There is a one-to-one correspondence between such $(a_5,\dots,a_8)$
  satisfying the torsor equation and triples
  $(a_5,a_6,a_7) \in (\dfr_{58} \OO_5 \times \OO_6 \times \OO_7) \cap \Fs_0$ satisfying
  \begin{equation}\label{eq:congruence}
    \congr{a_7^2}{aa_2^4a_3^2a_4^6a_6^2}{a_1\dfr_{58} \OO_8 =
      \afr_1\dfr_{58}\cfr_0^2}.
  \end{equation}
  Furthermore, this congruence and the other coprimality conditions imply that
  $\dfr_{58}+\afr_2\afr_3\afr_4 = \OO_K$ and that every $\p$ with odd
  $v_\p(a)$ satisfies $v_\p(\dfr_{58}\afr_1) \le v_\p(a)$ (otherwise, we would
  have $\p \mid \afr_2\afr_3\afr_4\afr_6$, contradicting
  $\afr_1+\afr_2\afr_3\afr_4\afr_6=\OO_K$).  Hence $ \tilde M $ equals
  \begin{equation*}
    \!\!\! \sums{\dfr_{58} \in \Is_K\\\dfr_{58}+\afr_2\afr_3\afr_4 =
      \OO_K\\\forall\p: 2\nmid v_\p(a) \implies v_\p(\dfr_{58}\afr_1) \le
      v_\p(a)} \!\!\! \mu_K(\dfr_{58})
    \left|\left\{
        \begin{aligned}
          &(a_5,a_6,a_7) \in (\dfr_{58} \OO_5 \times \OO_6 \times \OO_7) \cap \Fs_0\\
          &(\ref{eq:congruence}),\ \afr_7+\afr_2\afr_3\afr_4 = \OO_K,\\
          &\afr_6+\afr_1\afr_2\afr_3\afr_5 = \afr_5+\afr_2\afr_4 = \OO_K
        \end{aligned}
      \right\}\right|.
  \end{equation*}
  We observe that $\dfr_{58}\afr_1+\afr_2\afr_3\afr_4\afr_6=\OO_K$, but
  $\gfr\coloneqq \dfr_{58}\afr_1+a\OO_K$ is not necessarily $\OO_K$. As above, let
  $\gfr'$ be the unique ideal with $v_\p(\gfr') = \lceil v_\p(\gfr)/2 \rceil$
  for all prime ideals $\p$. Then $a_7 \in \gfr'\OO_7$. Let
  $\qfr\coloneqq \dfr_{58}\afr_1\gfr^{-1}$.

  We claim: For every $a_7 \in \OO_7$ with (\ref{eq:congruence}) there is a
  unique $\rho \pmod{\qfr\gfr'}$ with
  \begin{equation}\label{eq:congruences_rho}
    \congr{a_7}{\rho a_2^2 a_3 a_4^3 a_6}{\qfr\gfr'\cfr_0},\qquad
    \congr{\rho^2}{a}{\qfr\gfr},\qquad \rho\OO_K+\qfr\gfr'=\gfr'.
  \end{equation}
  Conversely, if $a_7 \in \OO_7$ satisfies the first congruence in
  (\ref{eq:congruences_rho}) for some $\rho$ with the second congruence
  in (\ref{eq:congruences_rho}), then $a_7$ satisfies (\ref{eq:congruence}).

  For the proof of this claim, our first step is to observe: Every
  $\rho \in \OO_K$ satisfying the second congruence in
  (\ref{eq:congruences_rho}) also satisfies the last condition in
  (\ref{eq:congruences_rho}). To prove this observation, we must show that
  $\min\{v_\p(\rho),v_\p(\qfr\gfr')\} = v_\p(\gfr')$ for every prime ideal
  $\p$. Since $\gfr=\qfr\gfr+a\OO_K$, we have
  $v_\p(\gfr) = \min\{v_\p(\qfr\gfr),v_\p(a)\}$. In the case
  $v_\p(\gfr) = v_\p(\qfr\gfr) \le v_\p(a)$, we have $v_\p(\qfr)=0$, our
  congruence implies $v_\p(\rho) \ge v_\p(\gfr)/2$, and therefore
  $v_\p(\rho) \ge v_\p(\gfr')$, which proves the observation. In the case
  $v_\p(\gfr) = v_\p(a) < v_\p(\qfr\gfr)$, our congruence gives
  $2 \mid v_\p(a)$ and $v_\p(\rho) = v_\p(a)/2 = v_\p(\gfr')$, which proves
  the observation.

  We write $A=a_2^2a_3a_4^3a_6 \in \OO_2^2\OO_3\OO_4^3\OO_6=\cfr_0$ and
  $\Afr=A\cfr_0^{-1}=\afr_2^2\afr_3\afr_4^3\afr_6$ for simplicity. Now we
  prove one direction of our claim: Since $a_7 \in \OO_7=\cfr_0$ and
  $A \in \cfr_0$ with $A\cfr_0^{-1}+\qfr\gfr'=\Afr+\qfr\gfr'= \OO_K$, we can
  apply Lemma~\ref{lem:congruence} (with $\afr = \qfr\gfr'$, $\ffr = \cfr_0$,
  $y_1=A$, $y_2=a_7$, $x=\rho$) to see that there is a $\rho \in \OO_K$
  satisfying the first part of (\ref{eq:congruences_rho}), and that such a
  $\rho$ is unique modulo $\qfr\gfr'$. This congruence implies
  $\gfr'\cfr_0 \mid a_7-\rho A$; together with $\gfr'\cfr_0 \mid a_7$ and
  $\gfr'+\Afr=\OO_K$, this gives $\gfr' \mid \rho$.  By
  \cite[Lemma~5.7]{MR3269462} (with $\afr_1=\qfr$, $\afr_2=\gfr'\cfr_0$),
  $\congr{a_7^2}{\rho^2A^2}{\qfr\gfr'^2\cfr_0^2}$; since $\gfr \mid \gfr'^2$,
  this implies $\congr{a_7^2}{\rho^2A^2}{\qfr\gfr\cfr_0^2}$. With
  (\ref{eq:congruence}), we get
  $\congr{\rho^2A^2}{aA^2}{\qfr\gfr\cfr_0^2}$. By Lemma~\ref{lem:congruence}
  (with $\afr = \qfr\gfr$, $\ffr=\cfr_0^2$, $y_1=A^2$, $y_2=aA^2$,
  $x=\rho^2$), we have $\congr{\rho^2}{a}{\qfr\gfr}$, the second congruence in
  (\ref{eq:congruences_rho}).

  Now we prove the converse direction of the claim: Given such $a_7$ and
  $\rho$, the first congruence in (\ref{eq:congruences_rho}) together with
  \cite[Lemma~5.7]{MR3269462} (which we may apply with $\afr_1=\qfr$ and
  $\afr_2=\gfr'\cfr_0$ since the third part of (\ref{eq:congruences_rho})
  gives $\rho A \in \gfr'\cfr_0$ and the first congruence in
  (\ref{eq:congruences_rho}) then gives $a_7 \in \gfr'\cfr_0$) implies
  $\congr{a_7^2}{\rho^2A}{\qfr\gfr'^2\cfr_0}$ and hence
  $\congr{a_7^2}{\rho^2A}{\qfr\gfr\cfr_0}$. The second congruence in
  (\ref{eq:congruences_rho}) and Lemma~\ref{lem:congruence} (with
  $\afr = \qfr\gfr$, $\ffr = \cfr_0^2$, $y_1=A^2$, $y_2=aA^2$, $x=\rho^2$)
  give $\congr{\rho^2A^2}{aA^2}{\qfr\gfr\cfr_0^2}$. Therefore, $a_7$ satisfies
  (\ref{eq:congruence}).
  
  Our claim implies that
  \begin{align*}
    \tilde M =
    &\sums{\dfr_{58} \in \Is_K\\\dfr_{58}+\afr_2\afr_3\afr_4 = \OO_K\\\forall\p: 2\nmid v_\p(a) \implies v_\p(\dfr_{58}\afr_1) \le v_\p(a)} \mu_K(\dfr_{58})
    \sums{\rho \pmod{\qfr\gfr'}\\\rho\OO_K+\qfr\gfr'=\gfr'\\\congr{\rho^2}{a}{\qfr\gfr}} \\
    &\left|\left\{\begin{aligned}
          &(a_5,a_6,a_7) \in (\dfr_{58} \OO_5 \times \OO_6 \times \gfr'\OO_7) \cap \Fs_0\\
          &\congr{a_7}{\rho a_2^2 a_3 a_4^3 a_6}{\qfr\gfr'\cfr_0},\\
          &\afr_7+\afr_2\afr_3\afr_4 = \afr_6+\afr_1\afr_2\afr_3\afr_5 = \afr_5+\afr_2\afr_4 = \OO_K
        \end{aligned}\right\}\right|.
  \end{align*}
  Next, we remove $\afr_6+\afr_5=\OO_K$ by M\"obius inversion:
  \begin{align*}
    \tilde M = &\sums{\dfr_{56},\dfr_{58} \in \Is_K\\(\ref{eq:moebius_1})}
    \mu_K(\dfr_{56})\mu_K(\dfr_{58}) \sum_{\rho:(\ref{eq:rho})}\\
    &\left|\left\{\begin{aligned}
          &(a_5,a_6,a_7) \in
          ((\dfr_{56}\cap\dfr_{58}) \OO_5 \times \dfr_{56}\OO_6 \times \gfr'\OO_7) \cap \Fs_0\\
          &\congr{a_7}{\rho a_2^2 a_3 a_4^3 a_6}{\qfr\gfr'\cfr_0},\\
          &\afr_7+\afr_2\afr_3\afr_4 = \afr_6+\afr_1\afr_2\afr_3 = \afr_5+\afr_2\afr_4 = \OO_K
        \end{aligned}
      \right\}\right|.
  \end{align*}
  Here, we could restrict ourselves to $\dfr_{56}+\afr_1\afr_2\afr_3\afr_4=\OO_K$.
  
  Three more M\"obius inversions give
  \begin{align*}
    \tilde M = &\sums{\dfrb \in \Is_K^5\\(\ref{eq:moebius_1}),(\ref{eq:moebius_2})}
    \mu_K(\dfrb) \sum_{\rho:(\ref{eq:rho})}
    \left|\left\{
    \begin{aligned}
      &(a_5,a_6,a_7) \in \bfr_5 \times \bfr_6 \times \dfr_7\gfr'\OO_7) \cap \Fs_0\\
      &\congr{a_7}{\rho a_2^2 a_3 a_4^3 a_6}{\qfr\gfr'\cfr_0},\\
      &\afr_7+\afr_2\afr_3\afr_4 = \afr_6+\afr_1\afr_2\afr_3 = \afr_5+\afr_2\afr_4 = \OO_K
    \end{aligned}
        \right\}\right|.    
  \end{align*}

  Note that $a_7 \in \dfr_7\gfr'\OO_7$ and
  $\congr{a_7}{\rho a_2^2a_3a_4^3a_6}{\qfr\gfr'\cfr_0}$ are
  equivalent to
  \begin{equation*}
    \congr{a_7}{0}{\dfr_7\gfr'\cfr_0},\qquad
    \congr{a_7}{\rho a_2^2a_3a_4^3a_6}{\dfr_{58}\afr_1\gfr^{-1}\gfr'\cfr_0}
  \end{equation*}
  By our construction of $\gamma_7$, if $a_7 = \gamma_7a_6$, then
  this system of equivalences is satisfied, and this value for $a_7$ is
  unique modulo the lcm of the moduli. Hence it is equivalent to
  \begin{equation*}
    \congr{a_7}{\gamma_7a_6}{(\dfr_7\gfr'\cfr_0)\cap(\dfr_{58}\afr_1\gfr^{-1}\gfr'\cfr_0)
      = \dfr_7\dfr_{58}\afr_1\gfr^{-1}\gfr'\cfr_0}.\qedhere
  \end{equation*}
\end{proof}

\section{The first summation}\label{sec:first_summation}

In this section, we perform the summation over the variables $a_5,a_6,a_7$. We
first show that the region where $a_5, a_6$ are small does not contribute to
the main term; see Section~\ref{sec:small_conjugates}.  This yields a new
counting problem to which \cite{MR3264671} applies. We estimate the error term
in Section \ref{sec:volumes_of_projections} and the main term in Section
\ref{sec:first_summation_completion}, where the summation over the remaining
variables $a_1,\dots,a_4$ is translated into a summation over ideals
$\afr_1,\dots,\afr_4$.

\subsection{Summation of error terms}\label{sec:summation_lemma}

Lemma~\ref{lem:summation} will be used to bound all error terms in the first
summation when summed over the remaining variables $a_1,\dots,a_4$ and the
auxiliary variables $\dfr$ and $\rho$.

\begin{lemma}\label{lem:quadratic_condition}
  Let $\afr$ be a nonzero fractional ideal of $K$ and $b \in K$. Let $y_v \in
  K_v$ and $c_v > 0$ for all $v \mid \infty$. Let
  \begin{equation*}
    \Bfr \coloneqq  \{a' \in K : |\sigma_v(a')^2-y_v|_v \le c_v\text{ for all }v \mid \infty\}
  \end{equation*}
  Then
  \begin{equation*}
    |(b+\afr) \cap \Bfr| \ll \frac{1}{\N\afr} \left(\prod_{v \mid \infty} c_v\right)^{\frac{1}{2}}+1.
  \end{equation*}
\end{lemma}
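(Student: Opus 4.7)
The plan is to analyze the set $\Bfr$ at each archimedean place and then apply a lattice-point counting argument to bound the number of elements of the coset $b+\afr$ that land in $\Bfr$.

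\emph{Step 1 (Archimedean analysis).} For each $v \mid \infty$, I would study the set
\begin{equation*}
B_v \coloneqq \{x \in K_v : |x^2 - y_v|_v \le c_v\},
\end{equation*}
and show that it splits as a union of at most two components $B_v^+,B_v^-$, corresponding to the two branches of $\sqrt{y_v}$, each of $K_v$-Lebesgue measure $O(c_v^{1/2})$. For $v$ real this is the elementary bound $\sqrt{y_v+c_v}-\sqrt{\max(y_v-c_v,0)} \le \sqrt{2c_v}$. For $v$ complex (where $|\cdot|_v = |\cdot|_\CC^2$), one linearises $z \mapsto z^2$ near each root $\rho = \pm\sqrt{y_v}$: if $|y_v|_v > c_v$, each component is contained in a disc of $\CC$-radius $\ll c_v^{1/2}/|y_v|_v^{1/4}$ and area $\ll c_v/|y_v|_v^{1/2} \le c_v^{1/2}$; if $|y_v|_v \le c_v$, the whole set $B_v$ sits in a single disc of radius $\ll c_v^{1/4}$, hence area $\ll c_v^{1/2}$.

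\emph{Step 2 (Reduction to lattice counting).} Via the diagonal embedding $\sigma \colon K \hookrightarrow \prod_{v\mid\infty} K_v \cong \RR^d$, the set $\Bfr$ is contained in $\prod_v B_v$, which decomposes into at most $2^{r_1+r_2}=O_K(1)$ product regions $\prod_v B_v^{\epsilon_v}$ indexed by sign choices $\epsilon = (\epsilon_v)_v$. Each such product is contained in a box in $\RR^d$ of Lebesgue volume $\ll \prod_{v\mid\infty} c_v^{1/2}$, with side-lengths controlled place-by-place by Step 1. Since $\sigma(\afr)$ is a full-rank lattice of covolume $\asymp_K \N\afr$, a standard counting estimate for lattice points in product boxes (analogous to \cite[Lemma~5.6]{MR3269462} and to the tools developed in Section~5 of \cite{MR3552013}) yields
\begin{equation*}
|\{a' \in b+\afr : \sigma_v(a') \in B_v^{\epsilon_v} \text{ for all } v \mid \infty\}|
\ll \frac{\prod_{v\mid\infty} c_v^{1/2}}{\N\afr}+1.
\end{equation*}
Summing over the $O_K(1)$ sign choices $\epsilon$ completes the argument.

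\emph{Main obstacle.} The ``$\mathrm{vol}/\mathrm{covol}+1$'' estimate invoked in Step 2 fails for arbitrary measurable sets, so the crucial point is that each $B_v^{\epsilon_v}$ is contained in a genuine Euclidean ball in $K_v$ (not merely a set of small measure). This forces its image in $\RR^d$ under $\sigma$ into a box whose shape is adapted place-by-place, and a Minkowski-type argument on the successive minima of $\sigma(\afr)$ then produces the counting bound uniformly in the fractional ideal $\afr$.
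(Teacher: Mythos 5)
Your overall route---describe $\Bfr$ place by place as a union of at most two intervals/discs of $v$-adic measure $\ll c_v^{1/2}$, then count points of the shifted ideal lattice in the resulting $O_K(1)$ products of balls---is the same as the paper's, and your Step 1 is correct (the paper obtains the same facts from \cite[Lemma~5.1(a)]{MR2520770} for real $v$ and \cite[Lemma~3.2]{MR3269462} for complex $v$).

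The gap is in your resolution of the ``main obstacle''. All successive minima of $\sigma(\afr)\subset\RR^d$ are $\asymp_K\N\afr^{1/d}$, so a Minkowski-type count of $(b+\afr)$ in a Euclidean box with side lengths $s_1,\dots,s_d$ gives only $\ll\prod_j(1+s_j\N\afr^{-1/d})$, and the cross terms of this product are \emph{not} bounded by $\prod_j s_j/\N\afr+1$ when the box is eccentric: for $d=2$, $\N\afr=1$, $s_1=T$, $s_2=T^{-2}$, the Minkowski bound is $\asymp T$ while the asserted bound is $\asymp 1$. Since the $c_v$ here are arbitrary, this case really occurs, and successive minima alone cannot produce the estimate. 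What forces the count down to $\vol/\N\afr+1$ is the arithmetic of the ideal lattice: two distinct points of $(b+\afr)$ in one of your product regions differ by a nonzero $\alpha\in\afr$ with $\N\afr\le\prod_v|\alpha|_v\ll\prod_v c_v^{1/2}$ (which already settles the regime $\prod_v c_v^{1/2}\ll\N\afr$, where at most one point survives), and in the complementary regime one groups the elements by the ideal they generate and bounds the number of admissible units in each class. This is exactly the content of \cite[Lemma~7.1]{MR3552013} and \cite[Lemma~3.3]{MR3269462}, which the paper cites at this step; note that \cite[Lemma~5.6]{MR3269462}, which you invoke, is a congruence-manipulation lemma, not a counting estimate. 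With the correct counting lemma substituted for the successive-minima argument, your proof coincides with the paper's.
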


\begin{proof}
  For real $v$, the condition $|\sigma_v(a')^2-y_v|_v \le c_v$ describes one or
  two intervals in $\RR$ of length $\ll c_v^{1/2}$ (see
  \cite[Lemma~5.1(a)]{MR2520770}). For complex $v$, this condition describes a
  subset of $\CC$ contained in one or
  two balls of radius $\ll c_v^{1/2}$ as in \cite[Lemma~3.2]{MR3269462}.  Arguing
  as in \cite[Lemma~3.3]{MR3269462} and \cite[Lemma~7.1]{MR3552013} proves the
  result.
\end{proof}

\begin{lemma}\label{lem:summation}
  Let $\cfrb \in \Cs^5$, $\epsilon,\alpha_5,\alpha_6,\alpha_7 > 0$,
  $\alpha \in [0,1]$ with $\alpha\alpha_5+\alpha_6>1$ and
  $(1-\alpha)\alpha_5+\alpha_7>1$,
  $(e_1, \dots, e_4) \in \ZZ^4$ with $e_i > 0$ for some $i \in \{1, \dots, 4\}$ and
  $\beta \in \RRnz$. Consider the conditions
  \begin{equation}\label{eq:general_height}
    |N(a_1^{e_1}\cdots a_4^{e_4})|^{\sgn\beta} \ll B^{\sgn\beta},\qquad
    |N(a_1)| \ll B,\dots, |N(a_4)| \ll B.
  \end{equation}
  Then, for $B\ge 3$, we have
  \begin{align*}
    &\sums{\ab \in \Fs_1^4 \cap \OO_*\\(\ref{eq:general_height})}
    \sums{\dfrb\\(\ref{eq:moebius_1}),(\ref{eq:moebius_2})}
    \frac{|\mu_K(\dfrb)|}{\N(\bfr_5^{\alpha_5}\bfr_6^{\alpha_6}\bfr_7^{\alpha_7})}
    \sums{\rho\\(\ref{eq:rho})} \frac{B}{|N(a_1^{1-\alpha_7}a_2a_3a_4)|}
    \left(\frac{B}{|N(a_1^{e_1}\cdots a_4^{e_4})|}\right)^{-\beta}\\
    &\ll_\epsilon B(\log B)^{3+\epsilon},
  \end{align*}
  where the implicit constant also depends on
  $K,\alpha_5,\alpha_6,\alpha_7,\alpha,e_1,\dots,e_4,\beta$ and the
  implicit constants in (\ref{eq:general_height}).
\end{lemma}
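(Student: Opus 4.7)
\medskip

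\noindent\textit{Plan.} The plan is to execute the summations from the inside out: first over $\rho$, then over $\dfrb$, and finally over $\ab$. By direct comparison of \eqref{eq:rho} with the definition \eqref{eq:eta}, the inner $\rho$-sum equals exactly $\eta(\dfr_{58}\afr_1;a)$. For the denominator factors $\N\bfr_j^{\alpha_j}$, the squarefreeness of $\dfr_{56}$ and $\dfr_{58}$ (enforced by $|\mu_K(\dfrb)|$) yields $\N(\dfr_{56}\cap\dfr_{58})\ge\N\dfr_{56}^\alpha\N\dfr_{58}^{1-\alpha}$ for any $\alpha\in[0,1]$, and the containment $\gfr\mid a\OO_K$ gives $\N(\gfr^{-1}\gfr')\gg_a 1$. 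Combining these estimates,
\begin{equation*}
\frac{1}{\N\bfr_5^{\alpha_5}\N\bfr_6^{\alpha_6}\N\bfr_7^{\alpha_7}}\ll_a\frac{1}{\N\dfr_5^{\alpha_5}\N\dfr_{56}^{\alpha\alpha_5+\alpha_6}\N\dfr_6^{\alpha_6}\N\afr_1^{\alpha_7}\N\dfr_7^{\alpha_7}\N\dfr_{58}^{(1-\alpha)\alpha_5+\alpha_7}},
\end{equation*}
in which the exponents on $\dfr_{56}$ and $\dfr_{58}$ exceed $1$ by hypothesis.

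I would then carry out the $\dfrb$-sums by multiplicativity. The squarefree sums over $\dfr_5,\dfr_6,\dfr_7$ (divisors of $\afr_2\afr_4$, $\afr_1\afr_2\afr_3$, $\afr_2\afr_3\afr_4$ respectively) factor as $\prod_{\p\mid\afr}(1+\N\p^{-\alpha_j})$, i.e.\ multiplicative functions $g_j$ in the relevant ideal with $g_j(\p)\to 1$ as $\N\p\to\infty$; the $\dfr_{56}$-sum converges absolutely since $\alpha\alpha_5+\alpha_6>1$. For the $\dfr_{58}$-sum I would use multiplicativity of $\eta$ to write $\eta(\dfr_{58}\afr_1;a)=\eta(\afr_1;a)\prod_{\p\mid\dfr_{58}}r_\p(\afr_1;a)$ with ratios $r_\p$ that are uniformly bounded by Lemma~\ref{lem:eta_properties} (including the bounds at $\p\mid 2a$, of which there are only finitely many), turning the sum into $\eta(\afr_1;a)$ times an Euler product $\prod_\p(1+O(\N\p^{-(1-\alpha)\alpha_5-\alpha_7}))$ that converges absolutely. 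Combining $\N\afr_1^{-\alpha_7}$ with $|N(a_1^{1-\alpha_7})|^{-1}$ collapses to $|N(a_1)|^{-1}$ up to constants depending on $\cfrb$.

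What remains is a sum of shape
\begin{equation*}
B^{1-\beta}\sum_{\ab:\eqref{eq:general_height}}\frac{\eta(\afr_1;a)\cdot\tilde g(\ab)\cdot|N(a_1^{e_1}\cdots a_4^{e_4})|^\beta}{|N(a_1 a_2 a_3 a_4)|},
\end{equation*}
where $\tilde g$ is a product of the $g_j$'s, hence multiplicative in each $\afr_j$ and bounded on average. Passing to logarithmic coordinates $t_j=\log|N(a_j)|\in[0,\log B]$, the height factor becomes $e^{\beta(\sum e_jt_j-\log B)}$; since some $e_i\ne 0$, a standard Laplace-type computation (slice on the affine line orthogonal to $(e_1,\dots,e_4)$ and integrate $\int_0^{\log B}e^{-\beta s}\,ds\ll 1/\beta$) saves one logarithm relative to the unconstrained four-dimensional integral. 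The $\afr_1$-sum is controlled by Lemma~\ref{lem:upper_bound_eta_omega_new} with $C=0$ and partial summation, yielding $\sum_{\N\afr_1\le T}\eta(\afr_1;a)\tilde g(\afr_1)/\N\afr_1\ll_\epsilon(\log T)^{1+\epsilon}$, while each of the remaining three $\afr_j$-sums contributes a single $\log B$. Adding the exponents ($1+\epsilon+1+1+1=4+\epsilon$) and subtracting one for the height saving gives $(\log B)^{3+\epsilon}$, which multiplied by $B$ is the desired bound.

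The principal obstacle is to keep the logarithmic exponent as low as $3+\epsilon$. A naive estimate $\eta\ll 2^\omega$ would replace the $\afr_1$-sum by $\sum 2^{\omega_K(\afr_1)}/\N\afr_1\asymp(\log B)^2$, producing a total of $(\log B)^4$ and failing the claim; it is essential to exploit the nonprincipal-character structure of $\eta$ (encoded in Lemma~\ref{lem:upper_bound_eta_omega_new}, which rests on the nonprincipality of $\chi$) to retain only an $\epsilon$-loss in the $a_1$-sum.
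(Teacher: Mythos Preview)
Your proposal is correct and follows essentially the same route as the paper: bound $\N(\bfr_5^{\alpha_5}\bfr_6^{\alpha_6}\bfr_7^{\alpha_7})$ from below to decouple the $\dfrb$-sums (the paper reaches the same inequality), factor out $\eta(\afr_1;a)$ from $\eta(\dfr_{58}\afr_1;a)$ up to a bounded-per-prime ratio so that the $\dfr_{56},\dfr_{58}$-sums converge, absorb the $\dfr_5,\dfr_6,\dfr_7$-sums into a factor of size $(1+\epsilon')^{\omega_K(\afr_1\cdots\afr_4)}$, and then sum over the $\afr_i$ with $e_i>0$ first to save one logarithm via partial summation while invoking Lemma~\ref{lem:upper_bound_eta_omega_new} for the $\afr_1$-sum. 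Two small points of precision: your $\tilde g$ is not literally handled by Lemma~\ref{lem:upper_bound_eta_omega_new} with $C=0$ but with $C$ arbitrarily small (which is how the paper's $(1+\epsilon/5)^{\omega_K}$ arises), and your continuous ``Laplace-type'' heuristic is exactly the discrete partial-summation step the paper carries out by citing \cite[Lemma~2.4]{MR3269462} on the variable $\afr_i$ with $e_i>0$.
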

 
\begin{proof}
  The proof is similar to \cite[Lemma~7.3]{MR3552013}. We have
  \begin{equation*}
    \N(\bfr_5^{\alpha_5}\bfr_6^{\alpha_6}\bfr_7^{\alpha_7})|N(a_1^{1-\alpha_7}a_2a_3a_4)|
    \!\gg\!
    \N(\dfr_5^{\alpha_5}\dfr_6^{\alpha_6}\dfr_7^{\alpha_7}\dfr_{56}^{\alpha\alpha_5+\alpha_6}
    \dfr_{58}^{(1-\alpha)\alpha_5+\alpha_7})|N(a_1a_2a_3a_4)|.
  \end{equation*}
  The sum over $\rho$ gives the factor
  \begin{equation*}
    \eta(\dfr_{58}\afr_1;a) \ll 2^{\omega_K(\dfr_{58})}\eta(\afr_1;a),
  \end{equation*}
  using Lemma~\ref{lem:eta_properties}. The sums over $\dfr_{56},\dfr_{58}$
  converge. The sums over $\dfr_5,\dfr_6,\dfr_7$ are
  $\ll (1+\epsilon/5)^{\omega_K(\afr_1\afr_2\afr_3\afr_4)}$.  We may replace the
  summation over $\ab$ by a summation over ideals $\afrb$. Therefore, it is
  enough to show that
  \begin{equation*}
    \sum_{\afrb \in \Is_K^4:(\ref{eq:general_height_ideal})}
    \frac{\eta(\afr_1;a)\cdot(1+\epsilon/5)^{\omega_K(\afr_1\afr_2\afr_3\afr_4)}B}
    {\N(\afr_1\cdots\afr_4)}
    \left(\frac{B}{\N(\afr_1^{e_1}\cdots\afr_4^{e_4})}\right)^{-\beta} \ll B(\log B)^{3+\epsilon}.
  \end{equation*}
  where
  \begin{equation}\label{eq:general_height_ideal}
    \N(\afr_1^{e_1}\cdots \afr_4^{e_4})^{\sgn\beta} \ll B^{\sgn\beta},
    \qquad \N(\afr_1) \ll B,\dots, \N(\afr_4) \ll B.
  \end{equation}
  Indeed, if $e_i > 0$, we sum over $\afr_i$ first using the first bound from
  (\ref{eq:general_height_ideal}) and then over the remaining $\afr_j$ using
  the remaining bounds from (\ref{eq:general_height_ideal}). Here, we combine
  \cite[Lemma~2.4]{MR3269462} with Lemma~\ref{lem:upper_bound_eta_omega_new}
  for the summation over $\afr_1$ and with \cite[Lemma~2.9]{MR3269462} for the
  summation over $\afr_2, \afr_3, \afr_4$.
\end{proof}

\subsection{Small conjugates}\label{sec:small_conjugates}

In Lemma \ref{lem:moebius}, we have $|N(a_i)| \ge \N \bfr_i$, for $i=5,6$,
since $a_i \ne 0$ and $a_i \in \bfr_i$ by definition of
$\Gs(\cfrb,\ab,\dfrb,\rho)$. We would like to strengthen this to the condition
$|a_i|_v \ge \N \bfr_i^{d_v/d}$ for all $v\mid\infty$.  We set
\begin{equation*}
  S_F^*(\cfrb,\ab,\dfrb;B)\coloneqq \{(x_{5v},x_{6v},x_{7v})_v \in S_F(\ab;B) :
  \forall i=5,6,\ \forall v: |x_{iv}|_v\ge \N \bfr_i^{d_v/d}\}
\end{equation*}
and
\begin{equation*}
  \Fs_0^*(\cfrb, \ab, \dfrb; u_\cfrb B) \coloneqq 
  \{(a_5,a_6,a_7) \in (K^\times)^2 \times K
  : \sigma(a_5,a_6,a_7) \in S_F^*(\cfrb,\ab,\dfrb; u_\cfrb B)\}.
\end{equation*}
We show that we can replace $\Fs_0(\ab;u_\cfrb B)$ by
$\Fs_0^*(\cfrb, \ab, \dfrb; u_\cfrb B)$ in Lemma~\ref{lem:moebius} and that we
may restrict the summation over $\ab$ to those satisfying the new height
condition
\begin{equation}\label{eq:height_new}
  N(a_1^{-1}a_2^4a_3^2a_4^6) \le \N(\OO_1^{-1}\OO_2^4\OO_3^2\OO_4^6) B.
\end{equation}

\begin{lemma}\label{lem:height_new}
  Let $\cfrb \in \Cs^5$ and $\epsilon > 0$. Then, for $B \ge 3$,
  \begin{align*}
    |M_\cfrb(B)| = {}&\sums{\ab \in \Fs_1^4 \cap \OO_*\\(\ref{eq:height_new})} \vartheta_0(\afrb)
    \sums{\dfrb\\(\ref{eq:moebius_1}),(\ref{eq:moebius_2})} \mu_K(\dfrb)
    \sums{\rho\\(\ref{eq:rho})}
    |\Gs(\cfrb,\ab,\dfrb,\rho)\cap \Fs_0^*(\cfrb,\ab,\dfrb;u_\cfrb B)|\\
                    &+ O_\epsilon(B(\log B)^{3+\epsilon}).
  \end{align*}
\end{lemma}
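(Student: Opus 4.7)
The plan is to show that replacing $\Fs_0(\ab;u_\cfrb B)$ by $\Fs_0^*(\cfrb,\ab,\dfrb;u_\cfrb B)$ and restricting to $\ab$ satisfying \eqref{eq:height_new} in the expression from Lemma \ref{lem:moebius} changes $|M_\cfrb(B)|$ by a total error of $O_\epsilon(B(\log B)^{3+\epsilon})$. The general strategy follows the split case \cite[\S8]{MR3552013}: decompose the error as a finite union of ``spikes'' indexed by $(i,v_0) \in \{5,6\} \times \Omega_\infty$, where $|a_i|_{v_0} < \N\bfr_i^{d_{v_0}/d}$, together with the region where $\ab$ fails \eqref{eq:height_new}, and bound each piece separately using Lemma~\ref{lem:summation}.

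For a fixed spike at $(i,v_0)$, I would estimate the cardinality of $\Gs(\cfrb,\ab,\dfrb,\rho) \cap \Fs_0(\ab;u_\cfrb B)$ intersected with $\{|a_i|_{v_0} < \N\bfr_i^{d_{v_0}/d}\}$ by iterated summation: first over $a_7$, which lies in a coset of $\bfr_7$ and is constrained by the first monomial of $\tN_v$ (rewritten via the torsor equation as a quadratic condition in $a_7$), so Lemma~\ref{lem:quadratic_condition} gives a count bounded by $\N\bfr_7^{-1}$ times a geometric volume plus $1$; then over the spike variable $a_i$, restricted to a box in $\prod_v K_v$ whose $v_0$-extent is cut down by the spike condition and whose extent at other places is controlled by the remaining monomials of $\tN_v$; and finally over the other variable in $\{a_5,a_6\}$ using the remaining height bounds.

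The resulting bound matches the shape required by Lemma~\ref{lem:summation}, with exponents $\alpha_5,\alpha_6,\alpha_7 > 0$ attached to $\N\bfr_5, \N\bfr_6, \N\bfr_7$ and a tunable $\alpha \in [0,1]$. The key point is that the spike condition produces a saving of a positive power of $\N\bfr_i$ relative to the naive bound, which should be exactly what is needed to push $\alpha\alpha_5 + \alpha_6$ (for $i=6$) or $(1-\alpha)\alpha_5 + \alpha_7$ (for $i=5$) strictly above $1$; one then chooses $e_1,\dots,e_4$ and $\beta$ so that the relevant height bound falls within \eqref{eq:general_height} and applies Lemma~\ref{lem:summation} to obtain $O_\epsilon(B(\log B)^{3+\epsilon})$.

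The contribution from $\ab$ failing \eqref{eq:height_new} is handled in parallel: after removing the $a_6$-spike, one has $|a_6|_v \ge \N\bfr_6^{d_v/d}$ for all $v\mid\infty$, and combining this with the first monomial $|x_6(ax_2^4x_3^2x_4^6x_6^2-x_7^2)/x_1|_v$ of $\tN_v$ extracts an inequality of the form \eqref{eq:height_new} unless $|a_7|_v^2$ dominates $|a\,a_2^4a_3^2a_4^6a_6^2|_v$ at some $v$; the latter sub-case is controlled by a separate estimate on the number of $a_7$ in the corresponding range, again fed into Lemma~\ref{lem:summation}. The main obstacle will be the bookkeeping: verifying in each application of Lemma~\ref{lem:summation} that the strict inequalities on $(\alpha,\alpha_5,\alpha_6,\alpha_7)$ hold and that the accumulated powers of $\log B$ do not exceed $3+\epsilon$, especially when several iterated summations over $\dfr_{56},\dfr_{58},\dfr_5,\dfr_6,\dfr_7$ and $\rho$ are combined.
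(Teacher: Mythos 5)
Your proposal is correct and follows essentially the same route as the paper: the paper's proof simply defers to the split-case arguments of \cite[Lemmas~7.4--7.6]{MR3552013} (spike-by-spike removal of the regions $|a_i|_{v_0}<\N\bfr_i^{d_{v_0}/d}$ for $i\in\{5,6\}$, then introduction of \eqref{eq:height_new}), substituting Lemma~\ref{lem:quadratic_condition} for the count over $a_7$ and Lemma~\ref{lem:summation} for the summation of the resulting error terms over $\ab$, $\dfrb$ and $\rho$ --- exactly the two ingredients and the decomposition you describe.
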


\begin{proof}
  We show that we can replace $\Fs_0(\ab;u_\cfrb B)$ by
  $\Fs_0^*(\cfrb, \ab, \dfrb; u_\cfrb B)$ as in \cite[Lemmas~7.4,
  7.5]{MR3552013}, using Lemma~\ref{lem:quadratic_condition} instead of
  \cite[Lemma~7.1]{MR3552013} to prove the analogue of
  \cite[(7.13)]{MR3552013}, and Lemma~\ref{lem:summation} instead of
  \cite[Lemma~7.3]{MR3552013} to show that the error terms are sufficiently
  small.

  We show that we can introduce the condition (\ref{eq:height_new}) as
  in \cite[Lemma~7.6]{MR3552013}, again using
  Lemma~\ref{lem:summation} instead of \cite[Lemma~7.3]{MR3552013}.
\end{proof}

\subsection{Definability in an o-minimal structure}\label{sec:o-minimal}

In order to apply \cite{MR3264671} as in \cite{MR3552013}, we interpret
$|\Gs(\cfrb,\ab,\dfrb,\rho)\cap \Fs_0^*(\cfrb,\ab,\dfrb;u_\cfrb B)|$ in
Lemma~\ref{lem:height_new} as the number of lattice points in a set that is
definable in an o-minimal structure. Here and in the following, we write for
simplicity
\begin{equation*}
  \Gs\coloneqq \Gs(\cfrb,\ab,\dfrb,\rho),\quad
  \Fs_0^*\coloneqq \Fs_0^*(\cfrb, \ab, \dfrb; u_\cfrb B),\quad
  S_F^*\coloneqq S_F^*(\cfrb, \ab,\dfrb;u_\cfrb B).
\end{equation*}
We define the $\RR$-linear map
\begin{equation*}
  \tau\colon \prod_{v\mid\infty} K_v^3 \to \prod_{v\mid\infty} K_v^3
\end{equation*}
by
\begin{equation*}
  \tau((x_{5v},x_{6v},x_{7v})_v) \coloneqq 
  \left(\frac{x_{5v}}{\N\bfr_5^{\frac{1}{d}}},\frac{x_{6v}}{\N\bfr_6^{\frac{1}{d}}},\frac{x_{7v}}
    {\N\bfr_7^{\frac{1}{d}}}\right)_v.
\end{equation*}
Let
$\Lambda=\Lambda(\cfrb,\ab,\dfrb,\rho)\coloneqq \tau(\sigma(\Gs))$. Since
$\sigma \colon K^3 \to \prod_{v\mid\infty} K_v^3 \cong \RR^{3d}$ is
an embedding, we have
\begin{equation}\label{eq:lemma_8.1}
  |\Gs\cap\Fs_0^*| = |\sigma(\Gs) \cap S_F^*| = |\Lambda \cap \tau(S_F^*)|.
\end{equation}

Because of (\ref{eq:lemma_8.1}), we want to estimate
$|\Lambda \cap \tau(S_F^*)|$. For this, we would like to use
\cite[Theorem~1.3]{MR3264671}.

Let $Z$ be the set of all
\begin{equation*}
(\beta,\beta_5,\beta_6,\beta_7,(x_{1v},\dots,x_{7v})_v) \in
\RR^4 \times \prod_{v\mid\infty} K_v^7
\end{equation*}
satisfying
\begin{align*}
  \beta,\beta_5,\beta_6,\beta_7 &> 0,\\
  |x_{1v}|_v,\dots,|x_{4v}|_v & > 0 \text{ for all } v\mid\infty,\\
  |x_{5v}|_v,|x_{6v}|_v & \geq 1 \text{ for all } v\mid\infty,\\
  (\tN_v(x_{1v}, \dots, x_{4v}, \beta_5 x_{5v}, \beta_6 x_{6v}, \beta_7 x_{7v})^{1/3})_v
                                &\in \exp(F(\beta^{\frac{1}{3d}})),
\end{align*}
where $\exp \colon \RR^{\Omega_\infty} \to \RR_{>0}^{\Omega_\infty}$ is the
coordinate-wise exponential function.

For
\begin{equation*}
  T = (\beta,\beta_5,\beta_6,\beta_7,(x_{1v},\dots,x_{4v})_v) \in
  \RR^4 \times \prod_{v\mid\infty} K_v^4,
\end{equation*}
we define the fiber
\begin{equation*}
  Z_T \coloneqq  \{(x_{5v},x_{6v},x_{7v})_v \in \prod_{v\mid\infty} K_v^3
  : (\beta,\beta_5,\beta_6,\beta_7,(x_{1v},\dots,x_{7v})_v) \in Z\}.
\end{equation*}
Then $\tau(S_F^*) = Z_T$ for
$T\coloneqq (u_\cfrb B, \N\bfr_5^{1/d}, \N\bfr_6^{1/d},
\N\bfr_7^{1/d},(\sigma_v(a_1),\dots,\sigma_v(a_4))_v)$.

\begin{lemma}\label{lem:o-minimal}
  The set $Z \subset \RR^{4+7d}$ is definable in the o-minimal
  structure $\RR_{\exp} = \langle \RR; <,+,\cdot,-,\exp\rangle$. The
  fibers $Z_T$ are bounded.
\end{lemma}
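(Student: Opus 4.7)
The plan is to decompose the defining conditions of $Z$ into a conjunction of statements each of which is either semi-algebraic or expressible using the exponential function, and then to deduce boundedness of the fibers from boundedness of $F$ together with the monomial content of $\tN_v$.

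For definability, I would first note that the positivity conditions $\beta,\beta_5,\beta_6,\beta_7>0$ are polynomial. Choosing an $\RR$-basis of each $K_v$ (so $K_v\cong\RR^{d_v}$), the local absolute values $|x_{iv}|_v$ become polynomial functions in the real coordinates (the usual absolute value for $v$ real, the squared Euclidean norm for $v$ complex), so $|x_{iv}|_v>0$ and $|x_{iv}|_v\geq 1$ are semi-algebraic. The expression $\tN_v(x_{1v},\dots,x_{4v},\beta_5 x_{5v},\beta_6 x_{6v},\beta_7 x_{7v})$ from \eqref{eq:localheightconditions} is a maximum of finitely many such expressions (after clearing the $x_1$-denominator in the first monomial using $|x_{1v}|_v>0$), so is semi-algebraic. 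For the last condition, I would rewrite $(\log \tN_v^{1/3})_v \in F(\beta^{1/(3d)})=F+(-\infty,\log\beta^{1/(3d)}]\delta$ by introducing an auxiliary variable $s$ with $0<s\leq\beta^{1/(3d)}$ (playing the role of $\exp t$) and requiring that there exists $y=(y_v)_v\in F$ with $\tN_v^{1/3}=\exp(y_v)\cdot s^{d_v}$ for every $v\mid\infty$. Since $F$ is cut out by finitely many linear (in)equalities in $\RR^{\Omega_\infty}$, it is semi-algebraic, and the whole rewriting uses only $\exp$, polynomial operations, and bounded existential quantification; thus $Z$ is definable in $\RR_{\exp}$.

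For boundedness of $Z_T$ with $T=(\beta,\beta_5,\beta_6,\beta_7,(x_{1v},\dots,x_{4v})_v)$, I would use that $F$ is a bounded fundamental domain inside the trace-zero hyperplane, so there is a constant $c_F$, depending only on $F$, such that every element of $F(\beta^{1/(3d)})$ has $v$-coordinate at most $(\log\beta^{1/(3d)})d_v+c_F$. Exponentiating yields $\tN_v\ll\beta^{d_v/d}$ throughout $Z_T$, so every monomial appearing inside the max defining $\tN_v$ is bounded. Applying this to the $x_5^3$-monomial (term~3) yields an upper bound on $|x_{5v}|_v$ since $|x_{1v}|_v,|x_{2v}|_v,|x_{3v}|_v,\beta_5$ are positive and fixed; using this bound together with the $x_6^2$-monomial (term~4) yields an upper bound on $|x_{6v}|_v$; and finally the $x_5x_6x_7$-monomial (term~2), combined with the lower bounds $|x_{5v}|_v,|x_{6v}|_v\geq 1$ built into the definition of $Z$, yields an upper bound on $|x_{7v}|_v$. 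Hence $Z_T$ is bounded.

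The main obstacle is the clean rewriting of $(\log\tN_v^{1/3})_v\in F(\beta^{1/(3d)})$ inside $\RR_{\exp}$, because the translation along $\delta$ involves a parameter $t\leq\log\beta^{1/(3d)}$ appearing with varying local weights $d_v$; the substitution $s=\exp(t)$ converts this into the semi-algebraic-plus-$\exp$ relation above. Once this step is in place, both definability and the successive monomial bounds giving boundedness of $Z_T$ are essentially bookkeeping.
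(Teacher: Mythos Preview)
Your argument is correct and follows the same route as the paper (which merely cites \cite[Lemma~9.1]{MR3552013}): decompose the defining conditions into semi-algebraic pieces plus an application of $\exp$ to handle $F(\beta^{1/(3d)})$, and extract boundedness of the fibers from the monomial structure of $\tN_v$ together with the built-in lower bounds $|x_{5v}|_v,|x_{6v}|_v\ge 1$.

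Two small corrections. First, for a real place $v$ the absolute value $|x|_v$ is not a polynomial in the real coordinate but is semi-algebraic (definable using $<$ and $\cdot$); this is all you need, so just replace ``polynomial'' by ``semi-algebraic'' in that sentence. Second, in the boundedness chain the phrase ``using this bound'' before invoking term~4 is misleading: what you actually need there is the \emph{lower} bound $|x_{5v}|_v\ge 1$ (since $x_5$ appears to the first power in $|x_2^3x_3^2x_4^4x_5x_6^2|_v$), not the upper bound from term~3. You clearly know this, since you invoke exactly these lower bounds for term~2; just make the same move for term~4.
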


\begin{proof}
  This analogous to \cite[Lemma~9.1]{MR3552013}.
\end{proof}

\subsection{Volumes of projections}\label{sec:volumes_of_projections}

For every coordinate subspace $W$ of
$\prod_{v\mid\infty} K_v^3 = \RR^{3d}$, let
$V_W = V_W(\cfrb,\ab,\dfrb;u_\cfrb B)$ be the
$(\dim W)$-dimensional volume of the orthogonal projection of
$\tau(S_F^*)$ to $W$.

\begin{lemma}\label{lem:vol_projections}
  For every $\cfrb,\ab,\dfrb,\rho$ as in Lemma~\ref{lem:height_new}, we have
  \begin{equation*}
    |\Gs \cap \Fs_0^*| = \frac{2^{3r_2} \vol S_F^*}{|\Delta_K|^{3/2}\N(\bfr_5\bfr_6\bfr_7)}
    + O\left(\sum_W V_W\right),
  \end{equation*}
  where $W$ runs over all proper coordinate subspaces of $\RR^{3d}$.
\end{lemma}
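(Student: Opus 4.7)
My plan is to invoke the Barroero--Widmer counting theorem for lattice points in definable families \cite[Theorem~1.3]{MR3264671}. By \eqref{eq:lemma_8.1} we need to estimate $|\Lambda \cap \tau(S_F^*)|$, and by Lemma~\ref{lem:o-minimal} the set $\tau(S_F^*) = Z_T$ is a bounded fiber of the $\RR_{\exp}$-definable family $Z \subset \RR^{4+7d}$. Applied to $\Lambda$ and $Z$, their theorem yields
\[
|\Lambda \cap \tau(S_F^*)| = \frac{\vol(\tau(S_F^*))}{\det(\Lambda)} + O\!\left(\sum_{j=0}^{3d-1}\frac{V_j}{\lambda_1(\Lambda)\cdots\lambda_j(\Lambda)}\right),
\]
where $V_j$ is the maximum $j$-dimensional volume of an orthogonal projection of $\tau(S_F^*)$ onto a coordinate $j$-subspace and $\lambda_i(\Lambda)$ denote the successive minima in the Euclidean norm, and the implied constant depends only on the definable family $Z$ (hence only on $K$ through $d$).

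Next I would identify the main term by computing $\det(\Lambda)$. Observe that $\Gs$ is the image of the product lattice $\bfr_5 \oplus \bfr_6 \oplus \bfr_7$ under the $K$-linear shearing $\phi\colon(x,y,z)\mapsto(x,y,z+\gamma_7 y)$, which has determinant $1$ on each archimedean completion $K_v$. Hence $\sigma(\Gs)$ has the same $\RR^{3d}$-covolume as $\sigma(\bfr_5)\times\sigma(\bfr_6)\times\sigma(\bfr_7)$, namely $2^{-3r_2}|\Delta_K|^{3/2}\N(\bfr_5\bfr_6\bfr_7)$ via the standard Minkowski embedding formula $\vol(K_\RR/\sigma(\bfr_i)) = 2^{-r_2}|\Delta_K|^{1/2}\N\bfr_i$. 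Since $|\det\tau| = \N(\bfr_5\bfr_6\bfr_7)^{-1}$, we get $\det(\Lambda) = 2^{-3r_2}|\Delta_K|^{3/2}$ and $\vol(\tau(S_F^*)) = \vol(S_F^*)/\N(\bfr_5\bfr_6\bfr_7)$, so the main term above matches the claimed one.

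The main obstacle is collapsing the Barroero--Widmer error into $O(\sum_W V_W)$; this requires a uniform lower bound $\lambda_i(\Lambda)\gg_K 1$ on every successive minimum. For $\lambda_1(\Lambda)$ I would use the elementary fact that every nonzero $a$ in a fractional ideal $\bfr$ satisfies $\max_v |a|_v\ge\N\bfr^{1/d}$ (from $|N(a)|\ge\N\bfr$ and AM--GM on the valuations): given a nonzero $(a_5,a_6,a_7)\in\Gs$, the least index $i\in\{5,6,7\}$ with $a_i\ne 0$ (note that $a_5=a_6=0$ forces $a_7\in\bfr_7$) provides a coordinate of $\tau(\sigma(a_5,a_6,a_7))$ of absolute value $\geq 1$. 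For the higher $\lambda_i$ I would exhibit a $\ZZ$-basis of $\Lambda$ of Euclidean length $\ll_K 1$: start from reduced bases of $\tau(\sigma(\bfr_5)), \tau(\sigma(\bfr_6)), \tau(\sigma(\bfr_7))$ in the three $\RR^d$-blocks, apply the shearing to the $\bfr_6$-block basis, and then reduce the resulting $7$-components modulo the $\bfr_7$-block basis. Minkowski's second theorem then gives $\lambda_1(\Lambda)\cdots\lambda_{3d}(\Lambda)\asymp_K\det(\Lambda)\asymp_K 1$, and combined with $\lambda_i(\Lambda)\ll_K 1$ this forces $\lambda_i(\Lambda)\gg_K 1$ for every $i$. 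Consequently each $V_j/(\lambda_1\cdots\lambda_j)\ll_K V_j$, and majorizing each $V_j$ by the sum over all proper coordinate subspaces $W$ yields the desired error $O(\sum_W V_W)$.
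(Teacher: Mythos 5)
Your proposal is correct and follows essentially the same route as the paper: identify $|\Gs\cap\Fs_0^*|$ with $|\Lambda\cap\tau(S_F^*)|$ via \eqref{eq:lemma_8.1}, apply \cite[Theorem~1.3]{MR3264671} to the definable family $Z$, and compute $\det\Lambda=2^{-3r_2}|\Delta_K|^{3/2}$ using that the shearing by $\gamma_7$ has determinant one. The only (harmless) overkill is your treatment of the higher successive minima: since $\lambda_1\le\lambda_2\le\dots\le\lambda_{3d}$, the single bound $\lambda_1\ge 1$ already yields $\lambda_1\cdots\lambda_j\ge 1$ for every $j$, which is all that is needed to absorb the denominators in the Barroero--Widmer error term.
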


\begin{proof}
  As in \cite[Lemma~8.2]{MR3552013},
  the set $\Lambda$ is a lattice of rank $3d$ and determinant
  $(2^{-r_2}\sqrt{|\Delta_K|})^3$, and Minkowski's first successive
  minimum $\lambda_1$ of this lattice with respect to the unit ball in
  $\RR^{3d}$ is at least $1$.

  Now we argue as in \cite[Lemma~10.1]{MR3552013}. We start with
  (\ref{eq:lemma_8.1}) and apply \cite[Theorem~1.3]{MR3264671}, which shows
  that
  \begin{equation*}
    |\Gs \cap \Fs_0^*| = |\Lambda \cap \tau(S_F^*)| =
    \frac{\vol(\tau(S_F^*))}{\det\Lambda} + O\left(\sum_W V_W\right).
  \end{equation*}
  By the definition of $\tau$, we have
  $\vol(\tau(S_F^*)) = \vol(S_F^*)/\N(\bfr_5\bfr_6\bfr_7)$. 
\end{proof}

We want to estimate the $V_W$. 
By the definition of $F(\infty)$ and
of $S_F^*$, every $(x_{5v},x_{6v},x_{7v})_v \in S_F^*$ satisfies
\begin{equation*}
  |x_{5v}|_v \ge \N\bfr_5^{\frac{d_v}{d}},\qquad 
  |x_{6v}|_v \ge \N\bfr_6^{\frac{d_v}{d}},\qquad 
  \tN_v(\ab;x_{5v},x_{6v},x_{7v}) \ll B^{\frac{d_v}{d}}
\end{equation*}
for every $v\mid\infty$. In particular, with the notation
\begin{equation*}
  c_5\coloneqq \left(\frac{B}{|N(a_1^2a_2a_3^2)|}\right)^{\frac 1 3},\qquad
  c_6\coloneqq \left(\frac{B}{|N(a_2^3a_3^2a_4^4)|}\right)^{\frac 1 2},\qquad
  c_7\coloneqq (|N(a_1)|B)^{\frac 1 2},
\end{equation*}
and with (\ref{eq:fundamental_domain_norm}), it satisfies
\begin{align}
  \label{eq:bound_x5} \N\bfr_5^{\frac{d_v}{d}} \ll |x_{5v}|_v
  &\ll c_5^{\frac{d_v}{d}},\\
  \label{eq:bound_x6} \N\bfr_6^{\frac{d_v}{d}} \ll |x_{6v}|_v
  &\ll c_6^{\frac{d_v}{d}}|x_{5v}|^{-\frac 1 2},\\
  \label{eq:bound_x7} |x_{7v}\pm \sqrt{\sigma_v(aa_2^4a_3^2a_4^6a_6^2)}|_v
  &\ll c_7^{\frac{d_v}{d}}|x_{6v}|_v^{-\frac{1}{2}},\\
  \label{eq:bound_b7} \N\bfr_7^{\frac{d_v}{d}}
  &\ll c_7^{\frac{d_v}{d}}|x_{6v}|_v^{-\frac{1}{2}}.
\end{align}
Here, (\ref{eq:bound_x5}), (\ref{eq:bound_x6}) and (\ref{eq:bound_x7}) follow from the condition
\[
\tN_v(\ab;x_{5v},x_{6v},x_{7v})\ll B^{\frac{d_v}{d}}.
\]
In particular, (\ref{eq:bound_x7}) follows from the condition
\begin{equation}
\label{eq:local_height_condition}
  |(x_{7v}^2-\sigma_v(aa_2^4a_3^2a_4^6a_6^2))x_{6v}\sigma_v(a_1^{-1})|_v \ll
  B^{\frac{d_v}{d}}
\end{equation}
using \cite[Lemma~5.1(1)]{MR2520770} and \cite[Lemma~3.5(1)]{MR3269462}.
For (\ref{eq:bound_b7}), we compute
\begin{align*}
  |N(a_1^{-1})\N\bfr_7^2|^{\frac{d_v}{d}}|x_{6v}|_v &\ll
  |N(a_1)\N(\dfr_{7}\dfr_{58})^2|^{\frac{d_v}{d}}|x_{6v}|_v  \ll
    |N(a_1a_2^2a_3^2a_4^2)|^{\frac{d_v}{d}} |x_{5v}^2x_{6v}|_v\\
  &\ll |\sigma_v(a_1a_2^2a_3^2a_4^2)x_{5v}^2x_{6v}|_v \ll
  \tN_v(\ab;x_{5v},x_{6v},x_{7v}) \ll B^{\frac{d_v}{d}}
\end{align*}
using (\ref{eq:def_bfr}), (\ref{eq:moebius_2}), (\ref{eq:bound_x5}),
(\ref{eq:fundamental_domain_norm}), and (\ref{eq:local_height_condition}).

Define $\tau_v \colon K_v^3 \to K_v^3$ by
$(x_{5v},x_{6v},x_{7v}) \mapsto
(\N\bfr_5^{-1/d}x_{5v},\N\bfr_6^{-1/d}x_{6v},\N\bfr_7^{-1/d}x_{7v})$. Let
\begin{equation*}
  S_F^{(v)}\coloneqq \{(x_{5v},x_{6v},x_{7v}) \in K_v^3 :
  \text{(\ref{eq:bound_x5})--(\ref{eq:bound_b7}) hold}\}.
\end{equation*}
Then
\begin{equation*}
  \tau(S_F) \subset \prod_{v \mid \infty} \tau_v(S_F^{(v)}).
\end{equation*}
Let
\begin{align*}
  s_0 &\coloneqq  \frac{c_7c_6^{\frac 1 2}c_5^{\frac 3 4}}{\N(\bfr_5\bfr_6\bfr_7)} =
        \frac{B}{\N(\bfr_5\bfr_6\bfr_7)N(a_2a_3a_4)},\\
  s_5 &\coloneqq  \frac{c_7c_6^{\frac 1 2}c_5^{\frac 1 4}}{\N(\bfr_5^{\frac 1 2}\bfr_6\bfr_7)} =
        \frac{B}{\N(\bfr_5^{\frac 1 2}\bfr_6\bfr_7)N(a_2a_3a_4)}
        \left(\frac{B}{N(a_1^2a_2a_3^2)}\right)^{-\frac{1}{6}},\\
  s_6 &\coloneqq  \frac{c_7c_6^{\frac 1 4}c_5^{\frac 7 8}}{\N(\bfr_5\bfr_6^{\frac 3 4}\bfr_7)} =
        \frac{B}{\N(\bfr_5\bfr_6^{\frac 3 4}\bfr_7)N(a_2a_3a_4)}
        \left(\frac{B}{N(a_1^{-1}a_2^4a_3^2a_4^6)}\right)^{-\frac{1}{12}},\\
  s_7 &\coloneqq  \frac{c_7^{\frac 1 2}c_6^{\frac 3 4}c_5^{\frac 5
        8}}{\N(\bfr_5\bfr_6\bfr_7^{\frac 1 2})} =
        \frac{B}{\N(\bfr_5\bfr_6\bfr_7^{\frac 1 2}) N(a_1^{\frac 1 2}a_2a_3a_4)}
        \left(\frac{B}{N(a_1^2a_2^{-2}a_3^{-1}a_4^{-3})}\right)^{-\frac{1}{6}}.
\end{align*}

\begin{lemma}\label{lem:vol_projections_estimates}
  Let $v\mid\infty$. For $P = (p_5,p_6,p_7) \in \{0,\dots,d_v\}^3$,
  let $V_P$ be the volume of the orthogonal projection of $\tau_v(S_F^{(v)})$
  to
  \begin{equation*}
    \RR^{3d_v-p_5-p_6-p_7}\cong \{\text{$f_{p_j}(x_j)=0$ for all $j \in \{5,6,7\}$ with
      $p_j>0$}\} \subset \RR^{2d_v}
  \end{equation*}
  where
  \begin{equation*}
    f_p(x) =
    \begin{cases}
      x, & p=1,\ v\text{ real},\\
      \Re x\text{ or }\Im x, & p=1,\ v\text{ complex},\\
      x , & p=2,\ v\text{ complex}.
    \end{cases}
  \end{equation*}
  Then
  \begin{equation*}
    V_P \ll
    \begin{cases}
      s_0^{d_v/d}, & p_5=p_6=p_7=0,\\
      s_5^{d_v/d}, & p_5 \ne 0,\ p_6=p_7=0,\\
      s_6^{d_v/d}, & p_6 \ne 0,\ p_7=0,\\
      s_7^{d_v/d}, & p_7 \ne 0.
    \end{cases}
  \end{equation*}
\end{lemma}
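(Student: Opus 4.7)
The plan is a case-by-case direct volume computation, working with the normalized variables $\tilde x_j = x_{jv}/\N\bfr_j^{1/d}$ (so that $\tau_v$ becomes the identity) and with $\tilde c_5 = c_5/\N\bfr_5$, $\tilde c_6 = c_6/(\N\bfr_6 \N\bfr_5^{1/2})$, $\tilde c_7 = c_7/(\N\bfr_7 \N\bfr_6^{1/2})$. In these coordinates (\ref{eq:bound_x5})--(\ref{eq:bound_b7}) become $1 \le |\tilde x_5|_v \ll \tilde c_5^{d_v/d}$, $1 \le |\tilde x_6|_v \ll \tilde c_6^{d_v/d}|\tilde x_5|_v^{-1/2}$ together with $|\tilde x_6|_v \ll \tilde c_7^{2d_v/d}$, and $|\tilde x_7 \pm \sqrt{\ldots}/\N\bfr_7^{1/d}|_v \ll \tilde c_7^{d_v/d}|\tilde x_6|_v^{-1/2}$; moreover one checks that $s_0 = \tilde c_7\tilde c_6^{1/2}\tilde c_5^{3/4}$. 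For the full-volume case $P = (0,0,0)$ I would apply Fubini in the order $\tilde x_7, \tilde x_6, \tilde x_5$, combining the elementary estimates $\int_{|y|_v \le M} dy \ll M$ and $\int_{|y|_v \le M}|y|_v^{-1/2}\,dy \ll M^{1/2}$ (which hold uniformly for $v$ real or complex) to reach $V_P \ll s_0^{d_v/d}$.

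For the projection cases I would bound $V_P$ by the measure of the set of retained coordinates for which a valid extension to the projected-out coordinates exists. In case 2 ($p_5 \ne 0$, $p_6 = p_7 = 0$) the $\tilde x_7$- and $\tilde x_6$-integrations go through as in the full-volume computation except that $|\tilde x_5|_v$ is replaced by its minimum value over the fibre of the $\tilde x_5$-projection, say $\max(1, |\tilde x_5'|^2)$ when $v$ is complex and $p_5 = 1$; a final integration over the retained part $\tilde x_5'$ (when $p_5 < d_v$), split into the subregions $|\tilde x_5'| \le 1$ and $|\tilde x_5'| > 1$, produces the factor $\tilde c_5^{d_v/(4d)}$ required for $s_5^{d_v/d}$. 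In case 3 ($p_6 \ne 0$, $p_7 = 0$) the key new ingredient is the auxiliary upper bound $|\tilde x_6|_v \ll \tilde c_7^{2d_v/d}$ coming from (\ref{eq:bound_b7}): combining it with $|\tilde x_6|_v \ll \tilde c_6^{d_v/d}|\tilde x_5|_v^{-1/2}$ yields a joint bound on $|\tilde x_5|_v$, and splitting the $\tilde x_5$-integration according to whether $\tilde c_5 \le \tilde c_6^2$ or $\tilde c_5 > \tilde c_6^2$ gives $V_P \ll s_6^{d_v/d}$ in each subregion. Case 4 ($p_7 \ne 0$) follows the same pattern, with (\ref{eq:bound_b7}) decoupling $|\tilde x_6|_v$ from the projected-out $\tilde x_7$-direction and, after integrating the retained part $\tilde x_7'$, producing $s_7^{d_v/d}$.

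The main technical obstacle will be the bookkeeping across the subcases distinguished by which real components of a projected-out $\tilde x_j$ are retained, especially when $v$ is complex with $p_j = 1$: in this situation the existence condition for the non-retained component couples non-trivially to the lower bound $|\tilde x_j|_v \ge 1$ and produces the "$\max(1, \cdot)$" integrand whose integration gives the sharp fractional exponents $\tfrac14, \tfrac78, \tfrac58$ of $\tilde c_5$ in $s_5, s_6, s_7$ and $\tfrac14, \tfrac34$ of $\tilde c_6$ in $s_6, s_7$. The overall structure would closely parallel \cite[\S 10]{MR3552013}; the only substantive new feature is the square-root branching in (\ref{eq:bound_x7}), which merely doubles the number of $\tilde x_7$-balls being integrated and is absorbed into the implied constant.
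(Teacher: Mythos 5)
Your proposal is correct and follows essentially the same route as the paper: the paper's proof simply cites the computations of \cite[Lemmas~10.2, 10.3]{MR3552013} (with indices shifted) and observes that the only new feature, the two-centre condition \eqref{eq:bound_x7}, still yields $\int \dd x_7 \ll c_7^{d_v/d}|x_{6v}|_v^{-1/2}$ and is not used elsewhere. You propose to write out those same Fubini/projection computations explicitly, and you correctly isolate the same single point of difference (the $\pm\sqrt{\cdot}$ branching merely doubles the number of balls and is absorbed into the implied constant).
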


\begin{proof}
  The computations are exactly the same as \cite[Lemmas~10.2,
  10.3]{MR3552013}, except that the indices $5,6,7$ are shifted by one and
  that we have condition~(\ref{eq:bound_x7}) instead of
  $|x_{7v}| \ll c_7^{d_v/d}|x_{6v}|_v^{-1/2}$. But either condition is
  sufficient to show that
  \begin{equation*}
    \int \dd x_7 \ll \frac{c_7^{d_v/d}}{|x_{6v}|_v^{1/2}},
  \end{equation*}
  and they are not used elsewhere.
\end{proof}

\begin{lemma}\label{lem:V_W_small}
  Let $\cfrb \in \Cs^5$. Let $W$ be a proper coordinate subspace of
  $\prod_{v \mid \infty} K_v^3 = \RR^{3d}$. Let $V_W$ be the
  $\dim(W)$-dimensional volume of the orthogonal projection of
  $\tau(S_F^*)$ to $W$. Then, for $\epsilon > 0$, we have
  \begin{equation*}
    \sum_{\ab \in \Fs_1^4\cap\OO_*:(\ref{eq:height_new})}
    \vartheta_0(\ab) \sum_{\dfrb : (\ref{eq:moebius_1}),(\ref{eq:moebius_2}),S_F^*
      \ne \emptyset} |\mu_K(\dfrb)| \sum_{\rho:(\ref{eq:rho})} V_W
    \ll_\epsilon B(\log B)^{4-\frac 1 d+\epsilon}
  \end{equation*}
\end{lemma}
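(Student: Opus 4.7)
Since $\RR^{3d}$ has only finitely many proper coordinate subspaces, it suffices to bound the displayed sum for each such $W$ separately. Fix $W$. Because $\tau(S_F^*)\subseteq\prod_{v\mid\infty}\tau_v(S_F^{(v)})$, the orthogonal projection of $\tau(S_F^*)$ to $W$ is contained in the product of the orthogonal projections of each $\tau_v(S_F^{(v)})$ to the coordinate subspace of $K_v^3$ induced by $W$; hence $V_W\le\prod_{v\mid\infty} V_{P_v}$ for indices $P_v\in\{0,\dots,d_v\}^3$ with at least one $P_{v_0}\ne 0$. Fix such a place $v_0$, set $t=d_{v_0}/d\in(0,1]$, and let $i_{v_0}\in\{5,6,7\}$ be the corresponding index from Lemma~\ref{lem:vol_projections_estimates}. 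Using the trivial bound $V_{P_v}\ll s_0^{d_v/d}$ at every other place gives
\[
V_W\ll s_0^{1-t}\cdot s_{i_{v_0}}^{t}.
\]

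A direct substitution of the explicit formulas for $s_0,s_5,s_6,s_7$ displayed just before Lemma~\ref{lem:vol_projections_estimates} shows that this bound takes exactly the form of the summand appearing in Lemma~\ref{lem:summation}, with
\[
(\alpha_5,\alpha_6,\alpha_7,\beta)=\begin{cases}(1-t/2,\,1,\,1,\,t/6),&i_{v_0}=5,\\ (1,\,1-t/4,\,1,\,t/12),&i_{v_0}=6,\\ (1,\,1,\,1-t/2,\,t/6),&i_{v_0}=7,\end{cases}
\]
and with $(e_1,\dots,e_4)\in\ZZ^4$ equal to $(2,1,2,0)$, $(-1,4,2,6)$, or $(2,-2,-1,-3)$ respectively; in each case at least one $e_i$ is positive. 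The required height condition $|N(a_1^{e_1}\cdots a_4^{e_4})|\ll B$ is, for $i_{v_0}=5$, a consequence of the nonemptiness condition $\N\bfr_5\ll c_5$ (combined with the definition of $c_5$); for $i_{v_0}=6$ it coincides with~\eqref{eq:height_new}; and for $i_{v_0}=7$ it follows from $|N(a_1^2a_2a_3^2a_5^3)|\ll B$ (an immediate consequence of $\tN_v\ge|x_1^2x_2x_3^2x_5^3|_v$) together with $|N(a_5)|\ge\N\OO_5\gg 1$. A suitable choice of $\alpha\in(0,1)$ --- for example $\alpha=1/2$ when $i_{v_0}\in\{5,6\}$ and any $\alpha\in(0,\,1-t/2)$ when $i_{v_0}=7$ --- satisfies the remaining inequalities $\alpha\alpha_5+\alpha_6>1$ and $(1-\alpha)\alpha_5+\alpha_7>1$.

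Applying Lemma~\ref{lem:summation} then bounds the sum of $V_W$ over $\ab$, $\dfrb$, and $\rho$ by $O_\epsilon(B(\log B)^{3+\epsilon})$, which is absorbed by the desired $O_\epsilon(B(\log B)^{4-1/d+\epsilon})$ since $d\ge 1$. The main obstacle is the bookkeeping: for each proper coordinate subspace $W$ one must identify the correct index $i_{v_0}\in\{5,6,7\}$, compute the three resulting parameter sets above, check that the height-condition implications and the admissible range of $\alpha$ are uniform in $v_0$ and in the summation variables, and confirm that in the case $i_{v_0}=7$ the dependence on $a_1$ (through both $N(a_1^{1-\alpha_7})$ in the denominator and $N(a_1^2\cdots)$ in the extra factor) combines consistently with the form required by Lemma~\ref{lem:summation}.
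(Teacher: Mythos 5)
Your reduction of $s_0^{1-t}s_{i_{v_0}}^{t}$ to the shape required by Lemma~\ref{lem:summation} (the parameter lists, the choice of $\alpha$, and the sourcing of the height conditions from $S_F^*\ne\emptyset$ and \eqref{eq:height_new}) is correct and is essentially the computation the paper uses to bound the individual sums $\Sigma_j\coloneqq\sum_{\ab}\sum_{\dfrb}|\mu_K(\dfrb)|\sum_\rho s_j$ for $j=5,6,7$. But there is a genuine gap one step earlier: the ``trivial bound'' $V_{P_v}\ll s_0^{d_v/d}$ at the places $v\ne v_0$ is neither trivial nor true in general. For $P_v\ne 0$, the quantity $V_{P_v}$ is a \emph{lower-dimensional} volume of a projection, which is not dominated by the full volume of $\tau_v(S_F^{(v)})$ (a thin slab has small volume but a large projection). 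What Lemma~\ref{lem:vol_projections_estimates} actually gives at such a place is $V_{P_v}\ll s_{j_v}^{d_v/d}$ with $j_v\in\{5,6,7\}$, and one cannot replace $s_{j_v}$ by $s_0$: for instance $s_6/s_0=\bigl(c_5^{1/2}\N\bfr_6/c_6\bigr)^{1/4}$, and the available constraints only give $\N\bfr_6\ll c_6\,\N\bfr_5^{-1/2}$, so $s_6/s_0$ can be as large as $(c_5/\N\bfr_5)^{1/8}$, which is unbounded; a similar failure occurs for $s_7/s_0$. So for a proper coordinate subspace $W$ with nonzero projection indices at two or more archimedean places, your pointwise bound $V_W\ll s_0^{1-t}s_{i_{v_0}}^{t}$ is unjustified. (This also explains why your final exponent $3+\epsilon$ is too good: the lemma's exponent $4-\tfrac1d$ is exactly what a correct interpolation produces.)

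The paper circumvents this by never comparing the $s_j$ pointwise: it bounds the four sums $\Sigma_0\ll B(\log B)^{4+\epsilon}$ and $\Sigma_j\ll B(\log B)^{3+\epsilon}$ ($j=5,6,7$) separately, and then combines them with H\"older's inequality applied to $\sum\prod_v V_{P_v}\le\prod_v\bigl(\sum V_{P_v}^{d/d_v}\bigr)^{d_v/d}\ll\prod_v\Sigma_{j_v}^{d_v/d}$, which yields $B(\log B)^{4-\sum_{v: j_v\ne 0}d_v/d+\epsilon}\le B(\log B)^{4-1/d+\epsilon}$. To repair your argument without H\"older you would instead have to bound the full mixed product $\prod_v s_{j_v}^{d_v/d}=s_0^{t_0}s_5^{t_5}s_6^{t_6}s_7^{t_7}$ directly; this produces several factors of the form $(B/|N(a_1^{e_1}\cdots a_4^{e_4})|)^{-\beta}$, which is not literally the shape of Lemma~\ref{lem:summation}, so you would need to keep one such factor, bound the others by $O(1)$ using the corresponding height conditions, and re-verify the constraints on $(\alpha,\alpha_5,\alpha_6,\alpha_7)$ for the exponents $\alpha_5=1-t_5/2$, $\alpha_6=1-t_6/4$, $\alpha_7=1-t_7/2$. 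Either fix is workable, but as written the step ``$V_{P_v}\ll s_0^{d_v/d}$ at every other place'' is where the proof breaks.
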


\begin{proof}
  For $j=0,5,6,7$, we estimate
  \begin{equation*}
    \Sigma_j\coloneqq \sum_{\ab} \sum_{\dfrb} |\mu_K(\dfrb)| \sum_{\rho} s_j.
  \end{equation*}
  We have $\Sigma_0 \ll B(\log B)^{4+\epsilon}$. Indeed, we argue similarly as
  in Lemma~\ref{lem:summation}, using
  Lemma~\ref{lem:upper_bound_eta_omega_new} for the summation over $\afr_1$.
  For all other $j=5,6,7$, we have $\Sigma_P \ll B(\log B)^{3+\epsilon}$
  because of Lemma~\ref{lem:summation} with the third height condition for
  $j=5,7$ and with (\ref{eq:height_new}) for $j=6$.
  
  Finally, we argue as in \cite[Lemma~10.4]{MR3552013}, applying H\"older's inequality.
\end{proof}

\subsection{Completion of the first summation}\label{sec:first_summation_completion}

We show that $S_F(\ab;u_\cfrb B)$ and $S_F^*$ have roughly the same
volume. Here, we use the condition
\begin{equation}\label{eq:height_1234}
  N(a_1^2a_2a_3^2) \le \N(\OO_1^2\OO_2\OO_3^2)B.
\end{equation}

\begin{lemma}\label{lem:S_F-S_F*}
  Let $\cfrb \in \Cs^5$ and $\epsilon>0$. Then
  \begin{equation*}
    \sums{\ab \in \Fs_1^4\cap\OO_*\\(\ref{eq:height_new}),(\ref{eq:height_1234})}
    \sums{\dfrb\\(\ref{eq:moebius_1}),(\ref{eq:moebius_2})} \!\!\!|\mu_K(\dfrb)|
    \sums{\rho\\(\ref{eq:rho})}
    \frac{\vol(S_F(\ab;u_\cfrb B) \smallsetminus S_F^*(\cfrb,\ab,\dfrb;u_\cfrb
      B))}{\N(\bfr_5\bfr_6\bfr_7)} \ll_\epsilon B(\log B)^{3+\epsilon}.
  \end{equation*}
\end{lemma}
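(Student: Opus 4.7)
The plan is to decompose the spike region $S_F(\ab;u_\cfrb B)\smallsetminus S_F^*(\cfrb,\ab,\dfrb;u_\cfrb B)$ as a finite union of subsets indexed by $j\in\{5,6\}$ and $v_0\mid\infty$: namely $T_{j,v_0}=\{(x_{5v},x_{6v},x_{7v})_v\in S_F : |x_{jv_0}|_{v_0}< \N\bfr_j^{d_{v_0}/d}\}$. For each $T_{j,v_0}$, I would estimate its volume using exactly the iterated integration from Lemma~\ref{lem:vol_projections_estimates} (integrating $x_7$, then $x_6$, then $x_5$ at each archimedean place) with the bounds (\ref{eq:bound_x5})--(\ref{eq:bound_x7}), but capping the $x_{jv_0}$-range by $\N\bfr_j^{d_{v_0}/d}$ instead of the original larger bound.

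In the case $j=5$, the $x_{5v_0}$-integral $\int |x_{5v_0}|_{v_0}^{-1/4}\,\mathrm{d} x_{5v_0}$ now contributes $\N\bfr_5^{3d_{v_0}/(4d)}$ rather than $c_5^{3d_{v_0}/(4d)}$. Dividing the full spike volume by $\N(\bfr_5\bfr_6\bfr_7)$, we obtain
\begin{equation*}
\frac{\vol(T_{5,v_0})}{\N(\bfr_5\bfr_6\bfr_7)}\ll \frac{B}{|N(a_2a_3a_4)|\N(\bfr_5^{1-3d_{v_0}/(4d)}\bfr_6\bfr_7)}\left(\frac{B}{|N(a_1^2a_2a_3^2)|}\right)^{-d_{v_0}/(4d)}.
\end{equation*}
This fits the shape in Lemma~\ref{lem:summation} with $\alpha_5=1-3d_{v_0}/(4d)>0$, $\alpha_6=\alpha_7=1$, $\beta=d_{v_0}/(4d)>0$, and $(e_1,e_2,e_3,e_4)=(2,1,2,0)$; the height condition (\ref{eq:general_height}) for this choice is exactly (\ref{eq:height_1234}), and taking $\alpha=1/2$ gives $\alpha\alpha_5+\alpha_6>1$ and $(1-\alpha)\alpha_5+\alpha_7>1$. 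The case $j=6$ is handled analogously: capping $|x_{6v_0}|_{v_0}$ reduces the innermost $x_{6v_0}$-integral and alters the subsequent $x_{5v_0}$-integral, yielding a factor of the form $(B/|N(a_1^{-1}a_2^4a_3^2a_4^6)|)^{-\beta'}$ with $\beta'>0$, which is then absorbed via (\ref{eq:height_new}).

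Summing the finitely many $(j,v_0)$-contributions over $\ab$, $\dfrb$, $\rho$ and applying Lemma~\ref{lem:summation} to each yields the desired bound $B(\log B)^{3+\epsilon}$. The main technical obstacle is the bookkeeping: making sure the exponents extracted from the local volume integrals satisfy $\alpha_j>0$ for $j=5,6,7$ and the two strict inequalities $\alpha\alpha_5+\alpha_6>1$, $(1-\alpha)\alpha_5+\alpha_7>1$ for some $\alpha\in[0,1]$, while the compensating power of the height condition matches one of the inequalities (\ref{eq:height_new}) or (\ref{eq:height_1234}) already available in the summation.
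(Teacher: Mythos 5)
Your proposal is correct and is essentially the paper's own argument: the proof in the paper simply defers to \cite[Lemma~11.1]{MR3552013}, which carries out exactly this decomposition of the spike region into pieces $|x_{jv_0}|_{v_0}<\N\bfr_j^{d_{v_0}/d}$ ($j\in\{5,6\}$, $v_0\mid\infty$), bounds each piece by the capped iterated integral, and then invokes the summation lemma (here Lemma~\ref{lem:summation}). Your exponent bookkeeping checks out — for $j=5$ one gets $\alpha_5=1-3d_{v_0}/(4d)\ge 1/4$ with $\beta=d_{v_0}/(4d)$ matched by \eqref{eq:height_1234}, and for $j=6$ one gets $\alpha_6=1-d_{v_0}/(2d)\ge 1/2$ with $\beta'=d_{v_0}/(6d)$ matched by \eqref{eq:height_new}, any $\alpha\in(d_{v_0}/(2d),1)$ satisfying the two strict inequalities; the remaining conditions $|N(a_i)|\ll B$ in \eqref{eq:general_height} follow from combining \eqref{eq:height_new} and \eqref{eq:height_1234}.
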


\begin{proof}
  This is analogous to \cite[Lemma~11.1]{MR3552013}, using
  Lemma~\ref{lem:summation} instead of \cite[Lemma~7.3]{MR3552013}.
\end{proof}

\begin{prop}\label{prop:first_summation}
  For $\epsilon>0$ and $B \ge 3$, we have
  \begin{align*}
    N_{U,H}(B) 
    ={}&\frac{\rho_K}{3|\Delta_K|}
         \sums{\afrb \in \Is_K^4\\(\ref{eq:height_1234_ideal})}
    \frac{\vartheta_1(\afrb)B}{\N(\afr_1\afr_2\afr_3\afr_4)}
    \left(\prod_{v\mid\infty} \omega_v(S)\right)
    + O_\epsilon(B(\log B)^{4-\frac 1 d+\epsilon})
  \end{align*}
  with the height condition
  \begin{equation}\label{eq:height_1234_ideal}
    \N(\afr_1^2\afr_2\afr_3^2) \le B,\qquad 
    \N(\afr_1^{-1}\afr_2^4\afr_3^2\afr_4^6) \le B.
  \end{equation}
  and the notation
  \begin{equation*}
    \vartheta_1(\afrb) \coloneqq  \prod_{\p}
    \vartheta_{1,\p}(v_\p(\afr_1),\dots,v_\p(\afr_4))
  \end{equation*}
  with
  \begin{equation*}
    \vartheta_{1,\p}(\vv)\coloneqq 
    \begin{cases}
      \left(1-\frac{1}{\N\p}\right) \theta_{1,\p}(v_1),
      &\supp(\vv) = \emptyset, \{1\},\\
      \left(1-\frac{1}{\N\p}\right)^2, &\supp(\vv) = \{3\},\{4\},\\
      \left(1-\frac{1}{\N\p}\right)^3, &\supp(\vv) = \{2\},\{2,3\},\{2,4\},\\
      0, &\text{else}
    \end{cases}    
  \end{equation*}
  (using the notation $\supp(\vv)\coloneqq \{i  : v_i \ne 0\}$ from \cite[Definition~7.1]{MR2520770}) and
  \begin{equation*}
    \theta_{1,\p}(v_1)  \coloneqq 
    \begin{cases}
      1+\frac{1}{\N\p}-\frac{\eta(\p;a)}{\N\p^2}, &v_1 = 0,\\
      \frac{\eta(\p^{v_1};a)}{\N\p^{-\floor{\frac{\min\{v_1,v_\p(a)\}}{2}}}}
      -\frac{\eta(\p^{v_1+1};a)}{\N\p^{2-\floor{\frac{\min\{v_1+1,v_\p(a)\}}{2}}}}
      & v_1 > 0.
    \end{cases}
  \end{equation*}
\end{prop}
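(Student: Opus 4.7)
The plan is to combine Lemmas~\ref{lem:height_new}--\ref{lem:S_F-S_F*} and Lemma~\ref{lem:volume_S_F} to replace the lattice point count from Lemma~\ref{lem:moebius} by an explicit volume, translate the outer sums into a single sum over $\afrb \in \Is_K^4$, and then evaluate the remaining M\"obius sum over $\dfrb$ (with the $\rho$-sum collapsing to $\eta$) locally at each prime to identify $\vartheta_{1,\p}$.

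Starting from Lemma~\ref{lem:height_new}, I apply Lemma~\ref{lem:vol_projections} to each summand, replacing $|\Gs\cap\Fs_0^*|$ by the main term $2^{3r_2}\vol(S_F^*)/(|\Delta_K|^{3/2}\N(\bfr_5\bfr_6\bfr_7))$ plus projection errors; summed over $\ab,\dfrb,\rho,\cfrb$, these errors contribute $O_\epsilon(B(\log B)^{4-1/d+\epsilon})$ by Lemma~\ref{lem:V_W_small}, which matches the target. I then introduce the auxiliary height condition~(\ref{eq:height_1234}) (the removed summands are either excluded by $S_F^*\ne\emptyset$ or controlled by Lemma~\ref{lem:summation} with a negative exponent $\beta$), apply Lemma~\ref{lem:S_F-S_F*} to replace $\vol(S_F^*)$ by $\vol(S_F(\ab;u_\cfrb B))$ up to $O_\epsilon(B(\log B)^{3+\epsilon})$, and evaluate the latter via Lemma~\ref{lem:volume_S_F}.

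The sums $\sum_{\cfrb\in\Cs^5}\sum_{a_i\in\Fs_1\cap\OO_i^{\ne0}}$ become a single sum $\sum_{\afrb\in\Is_K^4}$ via $\afr_i\coloneqq a_i\OO_i^{-1}$: for fixed $\cfrb$ this is a bijection onto integral ideals in the class $[\OO_i^{-1}]$, and since $\OO_j=\cfrb^{m^{(j)}}$ shows that $[\OO_1],\dots,[\OO_4]$ depend only on $[\cfr_1],\dots,[\cfr_4]$, each $\afrb$ is hit by exactly $h_K$ values of $\cfrb$ (parametrized by the free class $[\cfr_0]$). A direct check shows that $u_\cfrb$ cancels against the $\N(\OO_j)$-factors arising from $|N(a_2a_3a_4)|$ and $\N(\bfr_5\bfr_6\bfr_7)$, leaving $1/\N(\afr_1\afr_2\afr_3\afr_4)$ as the effective weight. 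Combining with $\frac{1}{|\mu_K|}$ from Lemma~\ref{lem:parameterization} and the volume constants, the prefactor equals
\begin{equation*}
\frac{h_K}{|\mu_K|}\cdot\frac{2^{r_1}(2\pi)^{r_2}R_K}{3|\Delta_K|^{3/2}}\prod_{v\mid\infty}\omega_v(S)=\frac{\rho_K}{3|\Delta_K|}\prod_{v\mid\infty}\omega_v(S)
\end{equation*}
by (\ref{eq:rho_K_def}).

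It remains to identify the local factor $\vartheta_{1,\p}$. The $\rho$-sum in Lemma~\ref{lem:moebius} equals $\eta(\dfr_{58}\afr_1;a)$ by the definition~(\ref{eq:eta}), and $\eta$, $\mu_K(\dfrb)$, $\vartheta_0$, the conditions~(\ref{eq:moebius_1})--(\ref{eq:moebius_2}), and the norm denominators all factor over primes by Chinese remainder; hence $\vartheta_1=\prod_\p\vartheta_{1,\p}$ with $\vartheta_{1,\p}$ a finite sum in the $v_\p(\dfrb)$ depending only on $\vv=(v_\p(\afr_1),\dots,v_\p(\afr_4))$ and on $v_\p(a)$. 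A case analysis on $\supp(\vv)$ yields the stated formula: for $\supp(\vv)\in\{\{2\},\{3\},\{4\},\{2,3\},\{2,4\}\}$ only $v_\p(\dfr_{58})=0$ survives and the local sum is elementary; for $\supp(\vv)\subseteq\{1\}$ the sum over $v_\p(\dfr_{58})\in\{0,1\}$ couples with $\eta(\p^{v_1+v_\p(\dfr_{58})};a)$ and with the denominator factor $\N\p^{v_\p(\gfr)-v_\p(\gfr')}$ from $\bfr_7$ to produce $\theta_{1,\p}(v_1)$; and for all remaining $\vv$ the coprimality encoded in $\vartheta_0$ forces $\vartheta_{1,\p}=0$. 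The main obstacle is this last computation at primes $\p\mid 2a$, where Lemma~\ref{lem:eta_properties} splits into several subcases (odd vs.\ even $v_\p(a)$, with a further split at $\p\mid 2$) and the constraint $v_\p(\dfr_{58}\afr_1)\le v_\p(a)$ from~(\ref{eq:moebius_1}) must be respected; the expression $\floor{\min\{v_1,v_\p(a)\}/2}$ in $\theta_{1,\p}(v_1)$ traces back to $v_\p(\gfr')=\lceil v_\p(\gfr)/2\rceil$ with $v_\p(\gfr)=\min\{v_\p(\dfr_{58}\afr_1),v_\p(a)\}$, after which the enumeration is careful but routine bookkeeping.
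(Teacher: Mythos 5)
Your proposal is correct and follows essentially the same route as the paper: the same chain of Lemmas \ref{lem:height_new}, \ref{lem:vol_projections}, \ref{lem:V_W_small}, \ref{lem:S_F-S_F*} and \ref{lem:volume_S_F}, the same conversion of the $\cfrb$- and $\ab$-sums into an ideal sum with the factor $h_K$ absorbed into $\rho_K$, and the same prime-by-prime evaluation of the $\dfrb$- and $\rho$-sums to identify $\vartheta_{1,\p}$ (which the paper delegates to the analogues of \cite[Lemmas~11.2, 11.3]{MR3552013}). Your sketch of the local case analysis, including the origin of $\floor{\min\{v_1,v_\p(a)\}/2}$ from $v_\p(\gfr')=\ceil{v_\p(\gfr)/2}$, matches the intended computation.
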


\begin{proof}
  By
  Lemmas~\ref{lem:parameterization},~\ref{lem:height_new},~\ref{lem:vol_projections},~\ref{lem:V_W_small},~\ref{lem:S_F-S_F*},~\ref{lem:volume_S_F},
  \begin{align*}
    N_{U,H}(B) = {}
    &\frac{2^{r_1}(2\pi)^{r_2} R_K}{3\cdot |\mu_K|\cdot |\Delta_K|^{3/2}}
      \left(\prod_{v\mid\infty} \omega_v(S)\right)\cdot
      \sum_{\cfrb \in \Cs^5}
      \sum_{\ab \in \Fs_1^4 \cap \OO_*:(\ref{eq:height_new}),(\ref{eq:height_1234})} \vartheta_0(\afrb)\cdot\\
    &\sum_{\dfrb:(\ref{eq:moebius_1}),(\ref{eq:moebius_2})}
      \sum_{\rho:(\ref{eq:rho})}
      \frac{\mu_K(\dfrb)u_\cfrb B}{\N(\bfr_5\bfr_6\bfr_7)N(a_2a_3a_4)}
      +O(B(\log B)^{4-\frac 1 d+\epsilon}).
  \end{align*}
  By definition (see (\ref{eq:u_c_def}), (\ref{eq:def_bfr})),
  \begin{equation*}
    \frac{\mu_K(\dfrb)u_\cfrb B}{\N(\bfr_5\bfr_6\bfr_7)N(a_2a_3a_4)}
    =\frac{B}{\N(\afr_1\cdots\afr_4)}\cdot
    \frac{\mu_K(\dfrb)}{\N(\dfr_5\dfr_6\dfr_7\dfr_{56}\dfr_{58}(\dfr_{56}\cap \dfr_{58})\gfr^{-1}\gfr')}.
  \end{equation*}
  In the summation over
  $\cfrb = (\cfr_0,\cfr_1,\cfr_2,\cfr_3,\cfr_4)\in \Cs^5$, the summands are
  independent of $\cfr_0$. Hence we can replace this sum by the factor $h_K$
  and a sum over $(\cfr_1,\dots,\cfr_4) \in \Cs^4$; we also use the notation
  $\rho_K$ (\ref{eq:rho_K_def}). As in the proof of
  \cite[Lemma~11.3]{MR3552013}, we transform the summation over
  $(\cfr_1,\dots,\cfr_4)\in \Cs^4$ and
  $(a_1,\dots,a_4) \in \Fs_1^4 \cap \OO_*$ into a summation over
  $(\afr_1,\dots,\afr_4) \in \Is_K^4$. The conditions (\ref{eq:height_new}),
  (\ref{eq:height_1234}) on $\ab$ correspond to (\ref{eq:height_1234_ideal})
  on $\afrb$.  It remains to show that
  \begin{equation*}
    \vartheta_1(\afrb)=\vartheta_0(\afrb) \sum_{\dfrb :
      (\ref{eq:moebius_1}),(\ref{eq:moebius_2})}
    \frac{\mu_K(\dfrb)}{\N(\dfr_5\dfr_6\dfr_7\dfr_{56}\dfr_{58}(\dfr_{56}\cap \dfr_{58})\gfr^{-1}\gfr')}
    \sum_{\rho:(\ref{eq:rho})} 1.
  \end{equation*}
  This is a straightforward computation, as in \cite[Lemma~11.2]{MR3552013}.
\end{proof}

\section{The remaining summations}\label{sec:remaining_summations}

The summation over the ideals $\afr_1,\dots,\afr_4$ appearing in
Proposition~\ref{prop:first_summation} is implemented in Section
\ref{sec:remaining_summations_completion} by adapting the techniques developed
in \cite[\S 7]{MR3269462}. In order to do so, we first need to estimate the
average size of the arithmetic function $\vartheta_1$.

\subsection{Averages of arithmetic functions}\label{sec:averages}

\begin{lemma}\label{lem:summation_theta_1}
  Let $\vartheta_1$ be as in Proposition~\ref{prop:first_summation}. Then, for all
  $\afr_2,\afr_3,\afr_4\in\Is_K$, 
  $x \ge 0$ and $\epsilon > 0$, we have
  \begin{equation*}
    \sum_{\substack{\afr_1\in\Is_K\\ \N\afr_1 \le x}}  \vartheta_1(\afr_1,\afr_2,\afr_3,\afr_4) = \rho_K \cdot \vartheta_2(\afr_2,\afr_3,\afr_4)\cdot x + O_{a,\epsilon}((1+\epsilon)^{\omega_K(\afr_2\afr_3\afr_4)}x^{1-\frac{1}{4d}+\epsilon}),
  \end{equation*}
  where
  \begin{equation*} 
    \vartheta_2(\afr_2,\afr_3,\afr_4)\coloneqq \prod_\p
    \vartheta_{2,\p}(v_\p(\afr_2),v_\p(\afr_3),v_\p(\afr_4))
  \end{equation*}
  and (with $r_a(\p)$ as defined in
  Theorem~\ref{thm:number_fields})
  \begin{equation*}
    \vartheta_{2,\p}(\vv) =
    \begin{cases}
      \left(1-\frac{1}{\N\p}\right)^2\left(1+\frac{2+r_a(\p)}{\N\p}\right), &
      \supp(\vv)=\emptyset,\\
       \left(1-\frac{1}{\N\p}\right)^3, &\supp(\vv) = \{3\},\{4\},\\
      \left(1-\frac{1}{\N\p}\right)^4, &\supp(\vv) = \{2\},\{2,3\},\{2,4\},\\
      0, &\text{else.}
    \end{cases}
  \end{equation*}
\end{lemma}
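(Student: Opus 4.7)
The plan is to study the Dirichlet series
\begin{equation*}
D(s;\afr_2,\afr_3,\afr_4) \coloneqq \sum_{\afr_1\in\Is_K}\frac{\vartheta_1(\afr_1,\afr_2,\afr_3,\afr_4)}{\N\afr_1^s}
\end{equation*}
and extract its main behaviour near $s=1$ via Perron's formula. The first observation is that $\vartheta_{1,\p}(\vv)$ vanishes as soon as $\supp(\vv)$ contains $1$ together with any of $\{2,3,4\}$, so the sum is supported on $\afr_1$ coprime to $\afr_2\afr_3\afr_4$, and the local Euler factor at $\p\mid\afr_2\afr_3\afr_4$ is the single $s$-independent value $\vartheta_{1,\p}(0,v_\p(\afr_2),v_\p(\afr_3),v_\p(\afr_4))$.

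For $\p\nmid\afr_2\afr_3\afr_4$, I would compute the Euler factor $D_\p(s)=(1-\N\p^{-1})\sum_{k\ge 0}\theta_{1,\p}(k)\N\p^{-ks}$ explicitly using Lemma~\ref{lem:eta_properties} and check that $(1-\N\p^{-s})(1-\chi(\p)\N\p^{-s})D_\p(s)=1+O_a(\N\p^{-2\Re s})$, where $\chi$ is the Legendre character from \eqref{eq:def_chi}. This yields a global factorization
\begin{equation*}
D(s;\afr_2,\afr_3,\afr_4)=\zeta_K(s)\,L(s,\chi)\,\widetilde F(s;\afr_2,\afr_3,\afr_4),
\end{equation*}
in which $\widetilde F$ is analytic for $\Re s>1/2$ and uniformly bounded by $\ll_\epsilon(1+\epsilon)^{\omega_K(\afr_2\afr_3\afr_4)}$ on any compact vertical strip with $\Re s\ge 1/2+\epsilon$ (the $\omega_K$-dependence arising from the finitely many Euler factors at $\p\mid\afr_2\afr_3\afr_4$, each of which is of the form $1+O(1/\N\p)$ after absorbing the missing $\zeta_{K,\p}L_\p$ factors). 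I would then apply Perron's truncated formula, shift the contour to $\Re s=1/2+\epsilon$, and pick up the simple pole of $\zeta_K$ at $s=1$, using $L(1,\chi)\neq 0$ from Lemma~\ref{lem:L-function}\ref{it:value_at_1}. The residue gives the main term $\rho_K L(1,\chi)\widetilde F(1;\afr_2,\afr_3,\afr_4)x$; a prime-by-prime check, verifying that $L_\p(1,\chi)F_\p(1)=\vartheta_{2,\p}(\emptyset)$ at every prime and that the matching Euler factor at $\p\mid\afr_2\afr_3\afr_4$ produces $\vartheta_{2,\p}$ in the remaining cases, identifies this with $\rho_K\vartheta_2(\afr_2,\afr_3,\afr_4)x$. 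The vertical integral on the shifted line and the connecting horizontal segments are controlled by the convexity bounds $\zeta_K(s),L(s,\chi)\ll_\epsilon|\Im s|^{(1/2+\epsilon)d}$ (Lemma~\ref{lem:L-function}\ref{it:vertical_growth} and its standard analogue for $\zeta_K$); balancing the truncation height against the shift produces an error of order $x^{1-1/(d+2)+\epsilon}$, which dominates the claimed $x^{1-1/(4d)+\epsilon}$ since $4d\ge d+2$ for $d\ge 1$.

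The main obstacle is the identification $L_\p(1,\chi)F_\p(1)=\vartheta_{2,\p}(\emptyset)$ at the finitely many primes $\p\mid 2a$, especially when $v_\p(a)$ is even and $\p\mid 2$. In that regime Lemma~\ref{lem:eta_properties} only gives the estimate $\eta(\p^k;a)\le\N\p^{k-v_\p(a)}-\N\p^{k-v_\p(a)-1}$ for $v_\p(a)<k\le v_\p(4a)+1$, and $\eta$ stabilises only at $k=v_\p(4a)+1$; computing the corresponding truncated Dirichlet series amounts to summing exactly the finite expression $s_a(\p)$ appearing in the piecewise definition of $r_a(\p)$ in Theorem~\ref{thm:number_fields}. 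Patient case analysis against the explicit formulas of Lemma~\ref{lem:eta_properties} is unavoidable, but outside this verification the argument is a standard analytic contour shift. A secondary technicality is to keep the dependence on $\afr_2,\afr_3,\afr_4$ polynomial in $\omega_K(\afr_2\afr_3\afr_4)$: since each Euler factor at $\p\mid\afr_2\afr_3\afr_4$ is $1+O(1/\N\p)$, the factor $(1+\epsilon)^{\omega_K(\afr_2\afr_3\afr_4)}$ is recovered by separating the primes with $\N\p\le C_\epsilon$ (contributing a constant) from those with $\N\p>C_\epsilon$ (each contributing at most $1+\epsilon$).
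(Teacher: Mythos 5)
Your proposal is correct and follows essentially the same route as the paper's proof: both factor the Dirichlet series in $\afr_1$ as $\zeta_K(s)L(s,\chi)$ times a factor that is analytic and $\ll_\epsilon (1+\epsilon)^{\omega_K(\afr_2\afr_3\afr_4)}$ for $\Re s>1/2$, apply truncated Perron with a contour shift to $\Re s=1/2+\epsilon$ using the vertical growth bounds of Lemma~\ref{lem:L-function}\ref{it:vertical_growth}, and identify the residue with $\rho_K\vartheta_2$ by a prime-by-prime computation (the paper's only cosmetic difference is taking $T=x^{1/(2d)}$ rather than the optimally balanced $T=x^{1/(d+2)}$, both of which beat the stated exponent $1-\frac{1}{4d}$). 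You also correctly single out the genuine technical content, namely the verification of the local identity at $\p\mid 2a$ with $v_\p(a)$ even, which the paper likewise leaves as a direct computation.
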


\begin{proof}
  We may assume $\afr_3+\afr_4=\OO_K$ since the statement is trivial
  otherwise. The statement clearly holds for $x<2$; hence we can assume
  $x \ge 2$.  For simplicity, we write
  \begin{equation*}
    \vartheta_1(\qfr)\coloneqq \vartheta_1(\qfr,\afr_2,\afr_3,\afr_4),\
    \vartheta_2\coloneqq \vartheta_2(\afr_2,\afr_3,\afr_4),\
    \vartheta_{2,\p}\coloneqq  \vartheta_{2,\p}(v_\p(\afr_2),v_\p(\afr_3),v_\p(\afr_4)).
  \end{equation*}
  
  Let $A_\p(n) =
  \vartheta_{1,\p}(n,v_\p(\afr_2),v_\p(\afr_3),v_\p(\afr_4))$. Then
  $\vartheta_1(\qfr) = \prod_\p A_\p(v_\p(\qfr))$.  Let
  $\bfr\coloneqq \afr_2\afr_3\afr_4$. We have
  \begin{equation*}
    A_\p(0) =
    \begin{cases}
      \left(1-\frac 1{\N\p}\right)\left(1+\frac
        1{\N\p}-\frac{\eta(\p;a)}{\N\p^2}\right), & \p \nmid \bfr,\\
      \left(1-\frac 1{\N\p}\right)^2, & \p \mid \afr_3\afr_4,\ \p \nmid \afr_2,\\
      \left(1-\frac 1{\N\p}\right)^3, & \p \mid \afr_2.
    \end{cases}
  \end{equation*}
  By Lemma~\ref{lem:eta_properties}, $1-\frac{3}{\N\p^2}<A_\p(0)<1$ for all
  $\p \nmid 2a$ and $0< A_\p(0) \le 1$ for all $\p \mid 2a$. In particular,
  $c_0\coloneqq \prod_\p A_\p(0)$ is a positive real number $\le 1$.  Furthermore, for
  $n > 0$, we have
  \begin{equation*}
    A_\p(n) =
    \begin{cases}
      0, & \p \mid \bfr,\\
      \left(1-\frac 1{\N\p}\right)\left(1-\frac
        1{\N\p^2}\right)\left(1+\legendre{a}{\p}\right), & \p \nmid 2a\bfr,\\
      \left(1-\frac 1{\N\p}\right)\theta_{1,\p}(n)
      & \p \mid 2a,\ \p \nmid \bfr.
    \end{cases}
  \end{equation*}
  
  For every $\p$ and $n > 0$, we define the multiplicative functions
  $B_0,B_1 \colon \Is_K \to \RR$ by
  \begin{equation*}
    B_0(\p^n) \coloneqq  \frac{A_\p(n)-A_\p(n-1)}{A_\p(0)}=
    \begin{cases}
      -1, & \p \mid \bfr,\ n=1,\\
      0, & \p \mid \bfr,\ n>1,\\
      \frac{\legendre{a}{\p}-\frac 1{\N\p}}{1+\frac
        1{\N\p}-\frac{1+\legendre{a}{\p}}{\N\p^2}}, & \p \nmid 2a\bfr,\ n=1,\\
      0, & \p \nmid 2a\bfr,\ n>1,\\
      \frac{\theta_{1,\p}(n)-\theta_{1,\p}(n-1)}{\theta_{1,\p}(0)}, &
      \p \mid 2a,\ \p \nmid \bfr,\ n \le
      v_\p(4a)+1,\\
      0, & \p \mid 2a,\ \p \nmid \bfr,\ n > v_\p(4a)+1.
    \end{cases}
  \end{equation*}
  (using Lemma~\ref{lem:eta_properties}) and
  \begin{equation*}
    B_1(\p^n) \coloneqq  \begin{cases}
      \frac{\legendre{a}{\p}-\frac 1{\N\p}}{1+\frac
        1{\N\p}-\frac{1+\legendre{a}{\p}}{\N\p^2}}, & n=1,\ \p \nmid 2a,\\
      0, & \text{else.}
    \end{cases}
  \end{equation*}
  For $\p \nmid 2a$, we observe that
  \begin{equation}\label{eq:B_1_bound}
    |B_1(\p)|\le 1,\qquad B_1(\p) = \legendre{a}{\p}+O\left(\frac{1}{\N\p}\right).
  \end{equation}
  
  We consider the formal Dirichlet series
  \begin{gather*}
    F(s)\coloneqq \sum_{\qfr \in \Is_K} \frac{\vartheta_1(\qfr)}{\N\qfr^s},\
    G(s)\coloneqq \sum_{\qfr \in \Is_K} \frac{(\vartheta_1*\mu_K)(\qfr)}{\N\qfr^s},\\
    G_0(s)\coloneqq \sum_{\qfr \in \Is_K} \frac{B_0(\qfr)}{\N\qfr^s},\
    G_1(s)\coloneqq \sum_{\qfr \in \Is_K} \frac{B_1(\qfr)}{\N\qfr^s},\
    L(s,\chi)\coloneqq \sum_{\qfr \in \Is_K} \frac{\chi(\qfr)}{\N\qfr^s},
  \end{gather*}
  with $\chi$ as in~(\ref{eq:def_chi}). We will discuss their convergence
  below.  Since $\vartheta_1 = (\vartheta_1*\mu_K) * 1$, we have
  \begin{equation*}
    F(s) = G(s)\zeta_K(s),
  \end{equation*}
  where $\zeta_K(s)$ is the Dedekind zeta function.
  
  A computation as in \cite[Lemma~2.2(1)]{MR3269462} and 
  \cite[Proposition~6.8(3)]{MR2520770} shows that 
  \begin{equation*}
    (\vartheta_1*\mu_K)(\qfr) = \prod_{\p \nmid \qfr} A_\p(0) \prod_{\p \mid
      \qfr} (A_\p(v_\p(\qfr))-A_\p(v_\p(\qfr)-1)) = c_0B_0(\qfr),
  \end{equation*}
  hence
  \begin{equation*}
    G(s) = c_0G_0(s).
  \end{equation*}
  Since $B_0,B_1$ are multiplicative, we may compare the Euler products
  of $G_0(s)$ and $G_1(s)$. Their Euler factors differ at most at the primes
  $\p \mid 2a\bfr$. More precisely, we have $G_0(s)=G_1(s)H_1(s)$ with
  \begin{equation*}
    H_1(s) \coloneqq  \prod_{\p \mid \bfr}
    \frac{1-\frac{1}{\N\p^s}}{1+\frac{B_1(\p)}{\N\p^s}} \prods{\p \mid 2a\\\p
      \nmid \bfr} \left(1+\frac{B_0(\p)}{\N\p^s}+\dots
      +\frac{B_0(\p^{v_\p(4a)+1})}{\N\p^{(v_p(4a)+1)s}}\right).
  \end{equation*}
  Similarly, we have $G_1(s)=H_2(s)L(s,\chi)$ with
  \begin{equation}\label{eq:euler_factors_H2}
    H_2(s)\coloneqq \prod_{\p \nmid 2a}
    \left(1+\frac{B_1(\p)}{\N\p^s}\right)\left(1-\frac{\legendre{a}{\p}}{\N\p^s}\right)
    = \prod_\p \left(1+O\left(\frac{1}{\N\p^{s+1}}\right)+O\left(\frac{1}{\N\p^{2s}}\right)\right)
  \end{equation}
  by (\ref{eq:B_1_bound}).  In total,
  \begin{equation*}
    F(s) = G(s)\zeta_K(s) = c_0G_0(s)\zeta_K(s) = c_0H_1(s)H_2(s)L(s,\chi)\zeta_K(s).
  \end{equation*}

  Now we discuss the analytic properties of these functions. By
  \cite[Satz~LXI, LXX]{MR1544310},
  $\zeta_K(s)$ has a simple pole with residue $\rho_K$ at
  $s=1$ and is otherwise analytic; its Dirichlet series
  converges absolutely for $\Re(s)>1$. For
  $L(s,\chi)$, we refer to Lemma~\ref{lem:L-function}.
  By (\ref{eq:B_1_bound}), we have $1+\frac{B_1(\p)}{\N\p^s} \ne 0$ for
  $\Re(s) > 0$, hence $H_1(s)$ is analytic with absolutely convergent
  Dirichlet series for $\Re(s) > 0$.  For $\Re(s) > 1/2$,
  because of (\ref{eq:euler_factors_H2}),
  the function $H_2(s)$ is analytic and its Dirichlet series converges
  absolutely.

  In total, this shows that $G(s)$ is analytic for $\Re(s)>1/2$. Its Dirichlet
  series and its Euler product converge absolutely for $\Re(s)>1$.
  Since the Euler products of $L(s,\chi)$, $H_1(s)$, $H_2(s)$ converge for
  $s=1$ to $L(1,\chi)$, $H_1(1)$, $H_2(1)$, respectively (by
  Lemma~\ref{lem:L-function}\ref{it:conditional_convergence} and absolute convergence, respectively), the same
  is true for $G_0(s)$. Multiplying by the convergent product $c_0$ shows that
  \begin{equation}\label{eq:euler_product_G(1)}
    G(1)=c_0G_0(1) =
    \prod_\p A_\p(0)\left(1+\frac{B_0(\p)}{\N\p^s}+\frac{B_0(\p^2)}{\N\p^{2s}}+\dots\right),
  \end{equation}
  where the right hand side must be convergent. A computation reveals that
  each $\p$-adic factor of (\ref{eq:euler_product_G(1)}) agrees with
  $\vartheta_{2,\p}$. Therefore, $G(1)=\vartheta_2$. Using
  Lemma~\ref{lem:eta_properties}, we check that
  \begin{equation}\label{eq:r_a(p)-bound}
    r_a(\p) \ge -1
  \end{equation}
  for all $\p$. Therefore, $\vartheta_{2,\p} \ne 0$ for all $\p$, and hence
  $G(1) \ne 0$.
  
  Therefore, $F(s)$ has a simple pole with residue $\rho_K\vartheta_2$ at
  $s=1$ and is otherwise analytic for
  $\Re(s)>1/2$. Its Dirichlet series is
  absolutely convergent for $\Re(s)>1$.

  Next, we discuss the vertical growth of our functions. We define
  $C\coloneqq 1+\epsilon$. By \cite[Satz~LXX]{MR1544310},
  $\zeta_K(s) \ll_\epsilon |\Im(s)|^{(\frac 1 2+2\epsilon)d}$ for
  $\Re(s) \ge -2\epsilon$ and $|\Im(s)| \ge 2$; by Lemma~\ref{lem:L-function}\ref{it:vertical_growth},
  the same bound holds for $L(s,\chi)$. By absolute convergence,
  $|\zeta_K(s)|$ and $|L(s,\chi)|$ are $\ll_\epsilon 1$ for
  $\Re(s) \ge 1+2\epsilon$. Define $k_\epsilon(\sigma) \coloneqq  (1+2\epsilon)d-d\sigma$. By
  Phragmen-Lindel\"of (as in \cite[Remark to Theorem~II.1.20]{MR3363366}), in
  the domain $-2\epsilon \le \Re(s) \le 1+2\epsilon$ with $|\Im(s)| \ge 2$, we
  have $\zeta_K(s)L(s,\chi) \ll_{a,\Re(s),\epsilon} |\Im(s)|^{k_\epsilon(\Re(s))}$
  for each $\Re(s)$ and
  $\zeta_K(s)L(s,\chi) \ll_{a,\epsilon} |\Im(s)|^{k_\epsilon(\Re(s))+\epsilon}$
  uniformly. The Euler factors of $H_1(s)$ are $1$ for $\p \nmid 2a\bfr$ and
  $1+O(\frac{1}{\N\p^s})$ for $\p \mid \bfr$ by (\ref{eq:B_1_bound}), hence
  $H_1(s) \ll_{a,\epsilon} (1+\epsilon)^{\omega_K(\bfr)}$ for
  $\Re(s) \ge \epsilon$. By absolute convergence and since $H_2(s)$ is
  independent of $\bfr$ by definition, $H_2(s) \ll_{a,\epsilon} 1$ for
  $\Re(s) \ge 1/2+\epsilon$. In total, for
  $1/2+\epsilon \le \Re(s) \le 1+2\epsilon$ and $|\Im(s)| \ge 2$, we have
  $F(s) \ll_{a,\Re(s),\epsilon} C^{\omega_K(\bfr)}|\Im(s)|^{k_\epsilon(\Re(s))}$ for
  each $\Re(s)$ and
  $F(s) \ll_{a,\epsilon} C^{\omega_K(\bfr)}|\Im(s)|^{k_\epsilon(\Re(s))+\epsilon}$
  uniformly.
  
  We apply Perron's formula as in \cite[Lemma~1.4.2]{zbMATH00807194} with
  $c=1+2\epsilon$ and $T=\max\{x^{1/(2d)},2\}$.  We have
  \begin{equation*}
    a_n \coloneqq  \sum_{\N\qfr = n} \vartheta_1(\qfr)
    \ll_a \sum_{\N\qfr = n} 2^{\omega_K(\qfr)} \ll_\epsilon n^\epsilon
  \end{equation*}
  since there are $\ll n^{\epsilon/2}$ ideals of norm $n$, and each
  satisfies $2^{\omega_K(\qfr)} \ll_\epsilon \N\qfr^{\epsilon/2}$.
 Therefore, 
  \begin{equation*}
    \sum_{\N\qfr \le x} \vartheta_1(\qfr) = \frac{1}{2\pi i}
    \int_{c-iT}^{c+iT} F(s) \frac{x^s}{s} \dd s + R
  \end{equation*}
  with the error term
  \begin{align*}
    R & \ll \frac{x^c}{T} \sum_{n=1}^\infty \frac{|a_n|}{n^c} +
    \left(1+\frac{x \log x}{T}\right)\max_{3x/4\le n \le 5x/4}\{|a_n|\}\\
    &\ll_{a,\epsilon} \frac{x^{1+2\epsilon}}{T} \sum_{n=1}^\infty
    \frac{n^\epsilon}{n^{1+2\epsilon}} + \frac{x \log x}{T} (5x/4)^\epsilon\\
    &\ll_\epsilon x^{1-1/(2d)+2\epsilon}.
  \end{align*}
  Let $b = 1/2+\epsilon$. By Cauchy's residue theorem for the rectangle with
  vertices $b\pm iT, c\pm iT$, the main term is
  \begin{equation*}
    \Res_{s=1}\left(F(s)\frac{x^s}{s}\right) +
    \frac{1}{2\pi i}\left(
      -\int_{b-iT}^{c-iT}+\int_{b-iT}^{b+iT}+\int_{b+iT}^{c+iT}\right)
    F(s) \frac{x^s}{s} \dd s.
  \end{equation*}
  Since $\Res_{s=1}F(s)=\rho_K\vartheta_2$, the residue is
  $\rho_K\vartheta_2x$.  The first and third integral are (using
  $1/|s|\le 1/T$)
  \begin{align*}
    &\ll_\epsilon \int_b^c C^{\omega_K(\bfr)}T^{(1+2\epsilon)d-d\sigma+\epsilon} \frac{x^\sigma}{T} \dd
    \sigma
    \ll C^{\omega_K(\bfr)}T^{(1+2\epsilon)d+\epsilon-1} \int_b^{c} (x/T^d)^\sigma \dd \sigma\\
    &\ll C^{\omega_K(\bfr)}T^{(1+2\epsilon)d+\epsilon-1} \frac{(x/T^d)^c}{\log(x/T^d)} \ll
    C^{\omega_K(\bfr)}\frac{x^c}{T^{1-\epsilon}} \ll C^{\omega_K(\bfr)}x^{1-1/(2d)+3\epsilon}.
  \end{align*}
  For the second integral, we use $|F(s)| \le |\Im(s)|^{k(b)}$ and
  $|s| \ge |\Im(s)|$ (for $|\Im(s)| \ge 2$) and $|F(s)| \ll 1$ and $|s|\ge b$
  (for $|\Im(s)| \le 2$)
  to deduce
  \begin{align*}
    &\ll_\epsilon \int_0^2 C^{\omega_K(\bfr)}\frac{x^b}{b} \dd \tau + \int_2^T
      C^{\omega_K(\bfr)}\tau^{(1/2+\epsilon)d} \frac{x^b}{\tau} \dd \tau\\
    &\ll C^{\omega_K(\bfr)}x^{1/2+\epsilon}T^{(1/2+\epsilon)d}=C^{\omega_K(\bfr)}x^{3/4+3\epsilon/2}.
  \end{align*}
  In total, this proves the result.
\end{proof}

\subsection{Completion of the proof}\label{sec:remaining_summations_completion}

\begin{proof}[Proof of Theorem~\ref{thm:number_fields}]
  Our starting point is Proposition~\ref{prop:first_summation}. We argue similarly as
  in \cite[\S 7]{MR3269462}, but $\vartheta_1$ is more complicated.

  First, we sum over $\afr_1$.  For $t_1,\dots,t_4 \in \RR_{\ge 1}$, let
  \begin{equation*}
    V_1(t_1,\dots,t_4;B)\coloneqq 
    \begin{cases}
      \frac{B}{t_1\cdots t_4}, & t_1^2t_2t_3^2 \le B,\ t_1^{-1}t_2^4t_3^2t_4^6
      \le B,\\
      0, & \text{else.}
    \end{cases}
  \end{equation*}
  Since the two inequalities in the definition of $V_1$ (which correspond to
  the height conditions (\ref{eq:height_1234_ideal})) imply
  $t_1t_2^2t_3^2t_4^2 \le B$, we have $V_1(t_1,\dots,t_4;B)=0$ unless
  $t_1,\dots,t_4 \le B$.  We claim that
  \begin{align*}
    \sum_{\afrb \in \Is_K^4}
    &\vartheta_1(\afrb)V_1(\N\afr_1,\dots,\N\afr_4;B)\\
    ={}&
    \rho_K \sum_{\afr_2,\afr_3,\afr_4 \in \Is_K^3} \vartheta_2(\afr_2,\afr_3,\afr_4)
    \int_1^\infty V_1(t_1,\N\afr_2,\N\afr_3,\N\afr_4;B) \dd t_1 + O(B(\log B)^3).
  \end{align*}
  Indeed, by Lemma~\ref{lem:summation_theta_1}, we can apply
  \cite[Lemma~2.10]{MR3269462} with $m=1$,
  $c_1=(1+\epsilon)^{\omega_K(\afr_2\afr_3\afr_4)}$, $b_1=1-1/(2d)+\epsilon$,
  $k_1=0$, $c_g = B/\N(\afr_2\afr_3\afr_4)$, $a=-1$ for the summation over
  $\afr_1$. By \cite[Lemmas~2.9, 2.4, 2.10]{MR3269462}, the total error term
  is
  \begin{equation*}
    \ll \sum_{\N\afr_2,\N\afr_3,\N\afr_4 \le B}
    \frac{(1+\epsilon)^{\omega_K(\afr_2\afr_3\afr_4)}B}{\N(\afr_2\afr_3\afr_4)}
    \ll B(\log B)^{3+3\epsilon}.
  \end{equation*}

  For the summations over $\afr_2,\afr_3,\afr_4$, let
  \begin{equation*}
    \theta_{2,\p}(\vv) = \frac{\vartheta_{2,\p}(\vv)}{\vartheta_{2,\p}(0,0,0)} \in \RRnn
  \end{equation*}
  and
  \begin{equation*}
    \theta_2(\afr_2,\afr_3,\afr_4) = \prod_p \theta_{2,\p}(v_\p(\afr_2),v_\p(\afr_3),v_\p(\afr_4)),
  \end{equation*}
  hence 
  \begin{equation*}
    \vartheta_2(\afr_2,\afr_3,\afr_4) =
    \theta_2(\afr_2,\afr_3,\afr_4)\cdot\prod_\p\vartheta_{2,\p}(0,0,0).
  \end{equation*}
  
  Then $\theta_2 \in \Theta_3'(C)$ \cite[Definition~2.6]{MR3269462} for
  $C = \max(\{\N\p : \p \mid 2a\} \cup \{5\})$. Indeed,
  $\theta_2(\afr_2,\afr_3,\afr_4) \in [0,1]$ since
  \begin{equation*}
    0 < \left(1-\frac{1}{\N\p}\right)^4 < \left(1-\frac{1}{\N\p}\right)^3 < \left(1-\frac{1}{\N\p}\right)^2\left(1+\frac{2+r_a(\p)}{\N\p}\right)
  \end{equation*}
  is easily checked using~(\ref{eq:r_a(p)-bound}). By definition,
  $\theta_2(0,0,0)=1$. If $|\supp(\vv)|=1$, then
  $\theta_{2,\p}(\vv) \ge 1-\frac{C}{\N\p}$ holds for $\p \nmid 2a$ using
  $|r_a(\p)|=1$ and $C \ge 5$, and it holds for $\p \mid 2a$ since
  $\theta_{2,\p}(\vv) \ge 0$.

  Therefore, we can apply \cite[Proposition~7.2]{MR3269462} inductively (as in
  the proof of \cite[Proposition~7.3]{MR3269462}; note that our definition (\ref{eq:rho_K_def}) of
  $\rho_K$ differs from the one in \cite[\S 1.4]{MR3269462} by the factor $h_K$) to deduce that
  \begin{align*}
    &\sum_{\afr_2,\afr_3,\afr_4} \vartheta_2(\afr_2,\afr_3,\afr_4)
    \int_1^\infty V_1(t_1,\N\afr_2,\N\afr_3,\N\afr_4;B) \dd t_1\\
    &= \rho_K^3\vartheta_3 \int_{t_1,\dots,t_4 \ge 1} V(t_1,\dots,t_4;B) \dd
      t_1\cdots \dd t_4 + O(B(\log B)^3 \log \log B)
  \end{align*}
  where
  \begin{equation*}
    \vartheta_3\coloneqq \As(\As(\As(\vartheta_2(\afr_2,\afr_3,\afr_4),\afr_2),\afr_3),\afr_4)
  \end{equation*}
  is the average value of $\vartheta_2(\afr_2,\afr_3,\afr_4)$ when summed over
  $\afr_2,\afr_3,\afr_4$ (as defined before \cite[Proposition~2.3]{MR3269462}).
  
  Therefore,
  \begin{equation*}
    \sum_{\afrb \in \Is_K^4:(\ref{eq:height_1234_ideal})}
    \frac{\vartheta_1(\afr)B}{\N(\afr_1\afr_2\afr_3\afr_4)} =
    \rho_K^4
    \vartheta_3 V_0(B)+O(B(\log B)^3(\log \log B)),
  \end{equation*}
  with
  \begin{equation*}
    V_0(B)\coloneqq \frac{1}{3} \ints{t_1,\dots, t_4 \ge 1\\
        t_1^2t_2t_3^2 \le B\\ t_1^{-1}t_2^4t_3^2t_4^6\le B} \frac{B}{t_1t_2t_3t_4}
    \dd t_1 \dd t_2 \dd t_3 \dd t_4.
  \end{equation*}

  The classes of $E_1, \dots, E_5$ form a basis of $\Pic(\tS)$. Together with
  $[E_6] = [E_1]-[E_2]-2[E_4]+[E_5]$, they generate the effective cone. We
  have $-K_\tS = 2[E_1]+[E_2]+2[E_3]+3[E_5]$. As in
  \cite[Lemma~8.1]{MR3269462}, 
  \begin{equation*}
    V_0(B)=3\alpha(\tS)B(\log B)^4.
  \end{equation*}
  Here, by \cite[Corollary~7.5]{MR2377367},
  \begin{equation*}
    \alpha(\tS) = \frac{\alpha(S_0)}{|W(R)|} = \frac{1}{1728} = \alpha_{S}
  \end{equation*}
  as in Theorem~\ref{thm:number_fields}, where $S_0$ is a nonsplit smooth
  quartic del Pezzo surface with $\Pic(S_0)$ of rank $5$ (with
  $\alpha(S_0) = \frac{1}{36}$ by \cite[Theorem~4.1]{MR3143700}) and
  $|W(R)| = 2!\cdot 4!$ is the order of the Weyl group of the root system
  $\Athree+\Aone$ (since all $(-2)$-curves on $\tS_\Kbar$ are already defined
  over $K$).

  We check that $\vartheta_3 = \prod_\p \omega_\p(S)$ with $\omega_\p(S)$
  as in Theorem~\ref{thm:number_fields} using \cite[Lemma~2.8, formula
  (2.2)]{MR3269462}, which we may apply to $\theta_2$ and therefore also
  directly to $\vartheta_2$.

  By the discussion in the
  following Section~\ref{sec:constant}, our asymptotic formula agrees
  with the conjectures of Manin and Peyre.
\end{proof}

\section{The leading constant}\label{sec:constant}

In this section, we check that the leading constant in
Theorem~\ref{thm:number_fields} agrees with \cite[Formule
empirique~5.1]{MR2019019}.  Recall that $\phi\colon \tS \to S$ is a minimal
desingularization.  Then $\tS$ is an almost Fano variety in the sense of
\cite[Definition~3.1]{MR2019019} by the description in
\cite[\S3.4]{math.AG/0604194} and an application of Kawamata--Viehweg
vanishing \cite[Corollary~0.3]{MR667459}. Moreover, $\tS$ satisfies
\cite[Hypoth\`ese~3.3]{MR2019019} because it is rational.

\subsection{Convergence factors}\label{sec:convergence_factors}

As in \cite[\S 13]{MR3552013}, we construct an adelic metric on the
anticanonical line bundle $\omega_\tS^{-1}$ such that the corresponding height
function on $\tS(K)$ is $H \circ \phi$.  We denote by $\omega_{H,v}$ the local
measures, and by $\tau_H$ the Tamagawa measure corresponding to the adelic
metric (see \cite[\S 4]{MR2019019}). Then the leading constant is expected to
be
\begin{equation*}
  c_{S,H} = \alpha(\tS) \beta(\tS) \tau_H(\tS)
\end{equation*}
where $\alpha(\tS) = \alpha_S$ as discussed in
Section~\ref{sec:remaining_summations}, $\beta(\tS)=1$ as $\Pic(\tS_{\Kbar})$
is a permutation $\Gal(\Kbar/K)$-module and
\begin{equation*}
  \tau_H(\tS) = \lim_{s\to 1}(s-1)^5L_{S^0}(s,\Pic(\tS_{\Kbar})) \cdot
  \frac{1}{|\Delta_K|} \cdot \prod_{v \in \Omega_K} \lambda_v^{-1}\omega_{H,v}(\tS(K_v)),
\end{equation*}
for a finite set $S^0$ of finite places of $K$,
as defined in \cite[Notations~4.5]{MR2019019}.

\begin{lemma}
  We have
  \begin{multline*}
    \lim_{s \to 1} (s-1)^5 L_{S^0}(s,\Pic(\tS_\Kbar))\prod_{v \in \Omega_K}
    \lambda_v^{-1}
    \omega_{H,v}(\tS(K_v))\\
    = \rho_K^5 \left(\prod_{v\in\Omega_{\infty}}\omega_{H,v}(\tS(K_v))\right)
    \prod_{\p\in\Omega_f}
    \left(\left(1-\frac{1}{\N\p^s}\right)^5\omega_{H,\p}(\tS(K_\p))\right).
  \end{multline*}
\end{lemma}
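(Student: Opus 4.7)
The plan is to decompose the Artin $L$-function $L(s, \Pic(\tS_\Kbar))$ using the Galois structure established in Section~\ref{sec:surfaces_torsors}. Since $\Gal(\Kbar/K)$ acts on $\Pic(\tS_\Kbar)$ through its quotient $\Gal(K(\sqrt a)/K)$, fixing the rank-$5$ sublattice $\Pic(\tS)$ (with basis $\ell_0,\dots,\ell_3,\ell_4+\ell_5$) and acting by $-1$ on the complementary rank-$1$ summand spanned by $\ell_4-\ell_5$, the complexified representation decomposes as $\mathbf{1}^{\oplus 5} \oplus \chi_0$, where $\chi_0$ denotes the nontrivial character of $\Gal(K(\sqrt a)/K)$ inflated to $\Gal(\Kbar/K)$. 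A local comparison of Euler factors (taking inertia invariants at ramified primes) then yields the Artin $L$-function identity $L_{S^0}(s, \Pic(\tS_\Kbar)) = \zeta_K^{S^0}(s)^5 \cdot L^{S^0}(s, \chi_0)$.

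Next, I would use the simple pole of $\zeta_K$ at $s=1$ with residue $\rho_K$ to compute $\lim_{s \to 1}(s-1)^5\zeta_K^{S^0}(s)^5 = \rho_K^5\prod_{\p \in S^0}(1-\N\p^{-1})^5$, while $L^{S^0}(1, \chi_0) = L(1, \chi_0)\prod_{\p \in S^0}(1-\chi_0(\p)\N\p^{-1})$. In Peyre's formalism the convergence factors are $\lambda_v = 1$ for $v \in \Omega_\infty \cup S^0$ and $\lambda_\p = L_\p(1, \Pic(\tS_\Kbar)) = (1-\N\p^{-1})^{-5}(1-\chi_0(\p)\N\p^{-1})^{-1}$ for $\p \notin S^0$, so multiplying and collecting terms gives
\[
\lim_{s \to 1}(s-1)^5 L_{S^0}(s,\Pic(\tS_\Kbar))\prod_v \lambda_v^{-1}\omega_{H,v}(\tS(K_v)) = \rho_K^5\, L(1, \chi_0)\Bigl(\prod_{v \mid \infty}\omega_{H,v}\Bigr) \prod_\p (1-\N\p^{-1})^5(1-\chi_0(\p)\N\p^{-1})\omega_{H,\p}.
\]

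The proof is completed by the Euler product identity $L(1, \chi_0)\prod_\p(1-\chi_0(\p)\N\p^{-1}) = 1$, which expresses the (conditionally convergent) Euler product of $L(s, \chi_0)$ at $s=1$. This follows from Lemma~\ref{lem:L-function}\ref{it:value_at_1} applied to the Hecke character $\chi$ of (\ref{eq:def_chi}): the characters $\chi$ and $\chi_0$ agree at all primes $\p \nmid 2a$, and the finitely many discrepancies at primes $\p \mid 2a$ where $\chi(\p) = 0$ but $\p$ is unramified in $K(\sqrt a)/K$ cancel out between the ratio $L(1, \chi_0)/L(1, \chi)$ and the corresponding ratio of truncated Euler products. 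The main technical obstacle is the careful local analysis at primes dividing $2a$, where the Artin character $\chi_0$ may disagree with the Hecke character $\chi$ used throughout the paper; all other steps are essentially formal manipulations of Euler products and residues.
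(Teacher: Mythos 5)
Your argument is correct, but it takes a genuinely different route from the paper's. You decompose the Galois module $\Pic(\tS_\Kbar)\otimes\QQ$ as $\mathbf{1}^{\oplus 5}\oplus\chi_0$ (which does follow from Section~\ref{sec:surfaces_torsors}: the action factors through $\Gal(K(\sqrt a)/K)$ and swaps $\ell_4\leftrightarrow\ell_5$), obtain the exact factorization $L_{S^0}(s,\Pic(\tS_\Kbar))=\zeta_{K,S^0}(s)^5L_{S^0}(s,\chi_0)$, and then fold $L(1,\chi_0)$ back into the Euler product via its conditionally convergent Euler product at $s=1$. The paper instead never identifies the representation: after enlarging $S^0$ to contain all $\p\mid 2a$, it reads off $\Tr(\Fr_\p)$ from the point counts $|\tSS(\FFF_\p)|$ via Weil's formula combined with the local density computation of Lemma~\ref{lem:tamagawa_p_numberfield}, factors each local Euler factor only up to a term $G_\p(s)=1+O(\N\p^{-2s})$ with absolutely convergent product, and lets the resulting factors $L_{S^0}(1,\chi)G(1)$ cancel between the limit of the $L$-function and $\prod_v\lambda_v^{-1}\omega_{H,v}(\tS(K_v))$. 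Your approach is more conceptual and avoids the forward reference to the density computation for the trace of Frobenius (though you still need $\omega_{H,\p}(\tS(K_\p))=1+(5+\legendre{a}{\p})\N\p^{-1}+O(\N\p^{-2})$ to know that the final product on the right-hand side converges, and that the intermediate product with the extra factor $(1-\chi_0(\p)\N\p^{-1})$ converges absolutely); the paper's approach is more robust in that it needs no information about the characteristic polynomial of Frobenius beyond the trace and reuses the character $\chi$ and Lemma~\ref{lem:L-function} already in place. Two small points to make explicit in your write-up: first, the reduction to $S^0\supseteq\{\p:\p\mid 2a\}$ (enlarging $S^0$ does not change the quantity, since each discarded factor $L_\p(s,\Lambda)$ is cancelled by the corresponding change of $\lambda_\p$), which lets you dispense entirely with inertia invariants; second, the justification that the Euler product of $L(s,\chi_0)$ converges at $s=1$, which as you say reduces to Lemma~\ref{lem:L-function}\ref{it:value_at_1} for $\chi$ because the two characters differ at only finitely many primes.
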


\begin{proof}
  We follow the strategy of \cite[Proposition~5.1]{MR1681100}.  
 We use the notation $S^c\coloneqq\Omega_f\smallsetminus S^0$.  
  With
  $\Lambda\coloneqq \Pic(\tS_\Kbar)$, by definition,
  \begin{equation*}
    L_{S^0}(s,\Lambda) = \prod_{\p \in S^c} L_\p(s, \Lambda),
    \qquad
    \lambda_v=
    \begin{cases}
      L_v(1,\Lambda), & v \in S^c,\\
      1, & v \in S^0\cup\Omega_\infty,
    \end{cases}
  \end{equation*}
  with
  \begin{equation*}
    L_\p(s,\Lambda) = \frac{1}{\det(1-|\FFF_\p|^{-s} \Fr_\p)},
  \end{equation*}
  where $\Fr_\p$ is the geometric Frobenius acting on
  $\Pic(\tSS \times_\ZZ \FFF_\p) \otimes_\ZZ \QQ$.
 
  Enlarging $S^0$ does not change the expression that we want to
  compute. Therefore, we may assume that $S^0$ contains $\{\p \in\Omega_f : \p \mid 2a\}$
  and that there is a model $\tSS$ of $\tS$ over the ring $\OO_{S^0}$ of
  $S^0$-integers such that
  \begin{equation*}
    \omega_{H,\p}(\tS(K_\p)) = \frac{|\tSS(\FFF_\p)|}{|\FFF_\p|^2}
  \end{equation*}
  for all $\p \in S^c$ by \cite[Remarque~4.4]{MR2019019}.  By a result of Weil
  \cite[Theorem~23.1]{MR833513}, for $\p \in S^c$,
  \begin{equation*}
    |\tSS(\FFF_\p)| = |\FFF_\p|^2+|\FFF_\p| \Tr(\Fr_\p)+1.
  \end{equation*}
 In Lemma~\ref{lem:tamagawa_p_numberfield} we show that for finite places $\p \nmid 2a$,
  \begin{equation*}
    \omega_{H,\p}(\tS(K_\p)) = 1+\frac{5+\legendre{a}{\p}}{\N\p} + \frac{1}{\N\p^2}.
  \end{equation*}
  Since $|\FFF_\p| = \N\p$, we deduce, for $\p \in S^c$, that
  \begin{equation*}
    \Tr(\Fr_\p) = 5+\legendre{a}{\p}.
  \end{equation*}
  
  We have
  $\det(1-T\cdot f) = 1-\Tr(f)\cdot T + \dots + (-1)^n\det(f)\cdot T^n$ in the
  polynomial ring $\QQ[T]$ for an endomorphism $f$ of an $n$-dimensional
  $\QQ$-vector space. Therefore,
  \begin{equation*}
    L_\p(s,\Lambda) = 1+\frac{5+\legendre{a}{\p}}{\N\p^s}+O\left(\frac{1}{\N\p^{2s}}\right)
    = \left(\frac{1}{1-\frac{1}{\N\p^s}}\right)^5 \cdot
    \frac{1}{1-\frac{\chi(\p)}{\N\p^s}} \cdot  G_\p(s)
  \end{equation*}
  for some function
  \begin{equation*}
    G_\p(s) = 1+O\left(\frac{1}{\N\p^{2s}}\right).
  \end{equation*}
  With $\chi$ as in~(\ref{eq:def_chi}), let
  \begin{equation*}
    L_{S^0}(s,\chi) \coloneqq  \prod_{\p \in S^c}
    \frac{1}{1-\frac{\chi(\p)}{\N\p^s}},\qquad 
    \zeta_{K,S^0}(s) \coloneqq  \prod_{\p
      \in S^c} \frac{1}{1-\frac{1}{\N\p^s}},\qquad 
      G(s) \coloneqq  \prod_{\p \in
      S^c} G_\p(s).
  \end{equation*}
  Here, $G(s)$ is analytic for $\Re(s) > 1/2$, while $\zeta_{K,S^0}$ has a
  simple pole with residue
  \begin{equation*}
    \rho_{K,S^0}\coloneqq \rho_K\prod_{\p \in S^0\cap\Omega_f}\left(1-\frac{1}{\N\p}\right)
  \end{equation*}
  at $s=1$ and $L_{S^0}(1,\chi)$ converges and is nonzero by
  Lemma~\ref{lem:L-function}\ref{it:value_at_1}.  Therefore,
  \begin{equation*}
    L_{S^0}(s,\Lambda) = \zeta_{K,S^0}(s)^5L_{S^0}(s,\chi)G(s),
  \end{equation*}
  with
  \begin{equation*}
    \lim_{s \to 1} (s-1)^5 L_{S^0}(s,\Lambda)
    = \rho_{K,S^0}^5 L_{S^0}(1,\chi) G(1)
    = \rho_K^5 L_{S^0}(1,\chi) G(1)
    \!\!\prod_{\p \in S^0\cap\Omega_f}\!\!\left(1-\frac{1}{\N\p}\right)^5.
  \end{equation*}
  We compute
  \begin{align*}
    &\prod_{v\in\Omega_f} \lambda_v^{-1} \omega_{H,v}(\tS(K_v))\\
    ={}& L_{S^0}(1,\chi)^{-1}G(1)^{-1}
    \prod_{\p\in\Omega_f}
      \left(\left(1-\frac{1}{\N\p}\right)^5\omega_{H,\p}(\tS(K_\p))\right)
      \prod_{\p \in S^0\cap\Omega_f} \left(\frac{1}{1-\frac{1}{\N\p}}\right)^{-5}
  \end{align*}
  since $L_{S^0}(1,\chi) \ne 0$ by Lemma~\ref{lem:L-function}\ref{it:value_at_1}.
\end{proof}

\subsection{Local densities}\label{sec:local_densities}

We compute the local densities $\omega_{H,v}(\tS(K_v))$.  The birational projection $S \dashrightarrow \PP^2$,
$(x_0:\dots:x_4) \mapsto (x_1:x_3:x_4)$ induces the chart
\begin{align*}
  \AAA^2\smallsetminus\{x_3=0\} &\to S,\\
  (x_1,x_3) &\mapsto (ax_3-x_1^2:x_1:x_3^{-1}:x_3:1).
\end{align*}
With $f_1=x_0x_4+x_1^2-ax_3^2$, $f_2=x_2x_3-x_4^2$, we have
\begin{equation*}
  \det
  \begin{pmatrix}
    \dd f_1/\dd x_0 & \dd f_1/\dd x_2 \\
    \dd f_2/\dd x_0 & \dd f_2/\dd x_2
  \end{pmatrix}
  =x_3x_4,
\end{equation*}
hence with $x_4=1$, we deduce analogously to \cite[\S 13]{MR3552013} for every
$v \in \Omega_K$ that
\begin{equation}\label{eq:density_integral}
  \omega_{H,v}(\tS(K_v)) = \int_{K_v^2}
  \frac{\dd x_1 \dd x_3}{|x_3|_v\cdot \max\{|ax_3^2-x_1^2|_v,|x_1|_v,|x_3^{-1}|_v,|x_3|_v,1\}}.
\end{equation}

\begin{lemma}\label{lemma:omega_infty_archimedean}
  For every archimedean place $v$ of $K$, the constant $\omega_v(S)$ defined
  in Theorem~\ref{thm:number_fields} agrees with the $v$-adic density
  $\omega_{H,v}(\tS(K_v))$.
\end{lemma}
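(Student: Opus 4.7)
The plan is to verify $\omega_v(S) = \omega_{H,v}(\tS(K_v))$ for each archimedean place $v$ by reducing the three- or six-dimensional defining integral for $\omega_v(S)$ to the two- or four-dimensional integral representation \eqref{eq:density_integral}, via a change of variables that records the $\Gm$-action on the torsor.

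First, I would compare the denominator of \eqref{eq:density_integral} with the function $N_v$ of Theorem \ref{thm:number_fields}: a direct expansion gives
\[
|x_3|_v \max\{|ax_3^2-x_1^2|_v,\, |x_1|_v,\, |x_3|_v^{-1},\, |x_3|_v,\, 1\} = N_v(1,\, x_3,\, x_1),
\]
so \eqref{eq:density_integral} reads $\int_{K_v^2} dx_1\, dx_3 / N_v(1, x_3, x_1)$. This identification is not a coincidence: the anticanonical sections in \eqref{eq:anticanonical_sections}, evaluated on $(1,1,1,1,y_5,y_6,y_7,ay_6^2-y_7^2)$, produce exactly the five monomials inside the maximum defining $N_v$, and dehomogenising by $y_5^2 y_6$ recovers the affine chart $(x_1,x_3) = (y_7/y_5,\ y_6/y_5)$ from the proof of Lemma \ref{lem:parameterization}.

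Next, the key structural observation is that $N_v$ is homogeneous of degree three, $N_v(ty_5, ty_6, ty_7) = |t|_v^3\, N_v(y_5, y_6, y_7)$, since each monomial inside the maximum has total degree three in $(y_5, y_6, y_7)$. On the full-measure open subset $\{y_5 \ne 0\}$ of $K_v^3$, I substitute $(y_5, y_6, y_7) = (t,\, ts,\, tr)$ with $t \in K_v^\times$ and $r, s \in K_v$. The holomorphic Jacobian is $-t^2$, so the real Jacobian equals $|t|_v^2$ (namely $t^2$ for real $v$ and $|t|_{\CC}^4$ for complex $v$, where $|t|_v = |t|_{\CC}^2$). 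The constraint $N_v(y_5,y_6,y_7) \le 1$ becomes $|t|_v \le M(r,s)^{-1/3}$, where $M(r,s) \coloneqq N_v(1,s,r)$.

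The remaining radial integral $\int_{|t|_v \le M^{-1/3}} |t|_v^2\, dt$ evaluates to $\tfrac{2}{3} M^{-1}$ in the real case and, via polar coordinates on $\CC \cong \RR^2$, to $\tfrac{\pi}{3} M^{-1}$ in the complex case. Multiplying by the normalisation $3/2$ or $12/\pi$ in the definition of $\omega_v(S)$ cancels these constants exactly, producing $\int_{K_v^2} dr\, ds / M(r,s)$, which agrees with \eqref{eq:density_integral} (under the standard Peyre convention that the measure on $K_v$ at a complex place is $|dx \wedge d\bar x| = 2\, d(\Re x)\, d(\Im x)$, accounting for the factor of $4$ in the four-dimensional case). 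I anticipate no substantive obstacle: the argument is mechanical once the two observations above -- the equality of denominators and the degree-three homogeneity of $N_v$ -- are in place, and the factors $3/2$ and $12/\pi$ appearing in Theorem \ref{thm:number_fields} are retrospectively explained as precisely the constants making this reduction work.
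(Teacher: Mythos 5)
Your computation is correct, and it is exactly the argument the paper invokes by deferring to the end of \cite[\S 13]{MR3552013}: one checks that the denominator of \eqref{eq:density_integral} equals $N_v(1,x_3,x_1)$ via the anticanonical sections \eqref{eq:anticanonical_sections}, and then integrates out the degree-three homogeneity of $N_v$ radially, the prefactors $3/2$ and $12/\pi$ (together with the doubled measure at complex places) cancelling the resulting constants. So the proposal matches the paper's approach; it simply makes the outsourced computation explicit.
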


\begin{proof}
  This is analogous to the end of \cite[\S 13]{MR3552013}.
\end{proof}

For a prime ideal $\p$ of $\OO_K$, let $\mu_\p$ be the Haar measure on $K_\p$, normalized such that $\mu_\p(\{x \in K_\p : v_\p(x) \ge 0\})=1$.

\begin{lemma}\label{lem:measure_squares}
  Let $\p$ be a prime ideal in $\OO_K$. Assume that $v_\p(a)$ is
  even. Let $x_3 \in K_\p^\times$, and let $k \ge 0$. Then
  \begin{equation*}
    \mu_\p(\{x_1 \in K_\p : v_\p(a(x_3/x_1)^2-1) \ge k\})
    = \frac{\eta(\p^{v_\p(a)+k};a)}{\N\p^{v_\p(ax_3^2)/2+k}}.
  \end{equation*}
\end{lemma}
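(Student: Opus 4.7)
The strategy is to reduce the Haar integral on the left-hand side to a count of square roots of a unit modulo $\p^k$, then match that count with $\eta(\p^{v_\p(a)+k};a)$ by unwinding (\ref{eq:eta}); I will carry out the argument for $k\ge 1$, which is the range in which the claimed equality has nontrivial finite content. The first observation is that the condition $v_\p(a(x_3/x_1)^2-1)\ge k$ pins $v_\p(x_1)$ to the single value $n:=v_\p(ax_3^2)/2$, which is an integer because $v_\p(a)$ is even: if $v_\p(ax_3^2/x_1^2)\ne 0$, then by the ultrametric inequality $v_\p(ax_3^2/x_1^2-1)=\min(v_\p(ax_3^2/x_1^2),0)\le 0<k$, a contradiction. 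The integration therefore collapses to the compact annulus $\{x_1\in K_\p: v_\p(x_1)=n\}$.

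Substituting $x_1=\pi_\p^n y$ with $y\in\OO_\p^\times$ contributes a Jacobian factor $|\pi_\p^n|_\p=\N\p^{-n}$, and rewrites the condition as $y^2\equiv u\pmod{\p^k}$ for $u:=ax_3^2/\pi_\p^{2n}\in\OO_\p^\times$. Because $u$ is a unit and $k\ge 1$, every solution lies automatically in $\OO_\p^\times$, and the Haar measure of the solution set equals $N_k/\N\p^k$, where $N_k$ denotes the number of solutions in $(\OO_\p/\p^k)^\times$. Combining the two steps yields the left-hand side as $N_k/\N\p^{n+k}$.

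It remains to identify $N_k$ with $\eta(\p^{v_\p(a)+k};a)$. Unwinding (\ref{eq:eta}) for $\qfr=\p^{v_\p(a)+k}$, one computes $\gfr=\p^{v_\p(a)}$, $\gfr'=\p^{v_\p(a)/2}$, and $\qfr\gfr^{-1}\gfr'=\p^{k+v_\p(a)/2}$; writing $\rho=\pi_\p^{v_\p(a)/2}\sigma$ with $\sigma$ a unit modulo $\p^k$ turns the two conditions defining $\eta$ into the single equation $\sigma^2\equiv a/\pi_\p^{v_\p(a)}\pmod{\p^k}$. Since $u$ and $a/\pi_\p^{v_\p(a)}$ differ by the unit square $(x_3/\pi_\p^{v_\p(x_3)})^2$, multiplication by a fixed square root of this unit gives a bijection between the two solution sets, so $N_k=\eta(\p^{v_\p(a)+k};a)$. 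The main obstacle is precisely this last identification: one has to verify that the global ceiling ideal $\gfr'$ appearing in the definition of $\eta$ really localizes to the expected prime power at $\p$, and that the unit-square twist between $u$ and $a/\pi_\p^{v_\p(a)}$ defines a well-defined bijection modulo $\p^k$.
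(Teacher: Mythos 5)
Your proof is correct and follows essentially the same route as the paper's: both arguments reduce the measure to a count of residues satisfying a square-root congruence and then match that count with the definition of $\eta(\p^{v_\p(a)+k};a)$ (the paper first normalizes to $x_3=1$ and identifies the set directly with the residue classes $\rho \pmod{\p^{v_\p(a)/2+k}}$, whereas you first normalize $x_1$ to a unit and then unwind $\eta$; the two are interchangeable). Your restriction to $k\ge 1$ is the right call rather than a gap: for $k=0$ the set on the left has infinite measure, so the stated identity only holds for $k\ge 1$, which is also the only range in which the paper invokes the lemma (the $k=0$ contribution to $\omega_{\p,6}$ is computed there directly as the measure of the full annulus $v_\p(x_1)=v_\p(ax_3^2)/2$).
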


\begin{proof}
  Let $\gamma=2\gamma'=v_\p(a)\ge 0$. First, we consider the case
  $x_3=1$.
  The condition $v_\p(a-x_1^2) \ge 2\gamma'+k$ holds if and
  only if $v_\p(x_1)=\gamma'$ and $\congr{x_1}{\rho}{\p^{\gamma'+k}}$
  for some $\rho \in \OO_K$ satisfying
  $\rho\OO_K + \p^{\gamma'+k} = \p^{\gamma'}$ and
  $\congr{\rho^2}{a}{\p^{2\gamma'+k}}$. Since
  $\eta(\p^{2\gamma'+k};a)$ is defined as the cardinality of the set
  of such $\rho \pmod{\p^{\gamma'+k}}$, our claim holds in the first
  case.

  In the general case, the transformation $x_1 \mapsto x_1/x_3$
  reduces to the first case.
\end{proof}

\begin{lemma}\label{lem:sum_kpk}
  For every $q \in \RR_{>1}$ and $n \in \ZZnn$, we have
  \begin{equation*}
    \sum_{k > n}\frac{k}{q^k}
    =\left(1-\frac 1 q\right)^{-2}\left(\frac{n+1}{q^{n+1}}-\frac{n}{q^{n+2}}\right).
  \end{equation*}
\end{lemma}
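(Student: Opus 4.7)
The identity is a routine consequence of standard geometric series formulas, so the plan is purely algebraic and there is no real obstacle beyond bookkeeping. First I would recall the two building blocks obtained from the geometric series $\sum_{j\ge 0} x^j = (1-x)^{-1}$: differentiating once yields $\sum_{j\ge 1} jx^{j-1}=(1-x)^{-2}$, hence $\sum_{j\ge 0}jx^j = x(1-x)^{-2}$. Both identities are valid for $|x|<1$, and I would apply them with $x=1/q$, which is admissible since $q>1$.

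Next I would reindex the tail via the substitution $j=k-n-1$:
\begin{equation*}
  \sum_{k>n}\frac{k}{q^k} \;=\; \frac{1}{q^{n+1}}\sum_{j\ge 0}\frac{j+n+1}{q^j}
  \;=\; \frac{1}{q^{n+1}}\left(\sum_{j\ge 0}\frac{j}{q^j} + (n+1)\sum_{j\ge 0}\frac{1}{q^j}\right).
\end{equation*}
Substituting the two closed forms gives
\begin{equation*}
  \sum_{k>n}\frac{k}{q^k} \;=\; \frac{1}{q^{n+1}}\left(\frac{q}{(q-1)^2} + \frac{(n+1)q}{q-1}\right)
  \;=\; \frac{(n+1)q - n}{q^n(q-1)^2}.
\end{equation*}

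Finally I would massage the right-hand side of the claimed identity into the same form: pulling $q^2/(q-1)^2 = (1-1/q)^{-2}$ out of the common denominator,
\begin{equation*}
  \Bigl(1-\tfrac1q\Bigr)^{-2}\!\left(\frac{n+1}{q^{n+1}}-\frac{n}{q^{n+2}}\right)
  = \frac{q^2}{(q-1)^2}\cdot\frac{(n+1)q - n}{q^{n+2}}
  = \frac{(n+1)q-n}{q^n(q-1)^2},
\end{equation*}
which matches the expression computed above and concludes the proof. As a sanity check one could also verify the identity by induction on $n$, using the recursion $S_n - S_{n+1} = (n+1)/q^{n+1}$ between consecutive tails, with base case $n=0$ the well-known sum $\sum_{k\ge 1}k/q^k = q/(q-1)^2$.
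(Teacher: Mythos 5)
Your proof is correct, and it is essentially the same elementary geometric-series argument as the paper's: the paper simply observes in one line that the tail series equals $\bigl(\sum_{i\ge 0}q^{-i}\bigr)^2\bigl(\tfrac{n+1}{q^{n+1}}-\tfrac{n}{q^{n+2}}\bigr)$ (a Cauchy-product check), whereas you compute the tail in closed form via the differentiated geometric series and match it to the right-hand side. Both routes are routine and your algebra checks out.
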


\begin{proof}
  We observe that
  \begin{equation*}
    \frac{n+1}{q^{n+1}}+\frac{n+2}{q^{n+2}}+\frac{n+3}{q^{n+3}}+\dots
    =\left(1+\frac{1}{q}+\frac{1}{q^2}+\dots\right)^2
    \left(\frac{n+1}{q^{n+1}}-\frac{n}{q^{n+2}}\right).\qedhere
  \end{equation*}
\end{proof}

\begin{lemma}\label{lem:tamagawa_p_numberfield}
  For each finite place $\p$ of $K$, we have
  \begin{equation*}
    \left(1-\frac{1}{\N\p}\right)^5\omega_{H,\p}(\tS(K_\p)) = \omega_\p(S),
  \end{equation*}
  with $\omega_\p(S)$ as in
  Theorem~\ref{thm:number_fields}.
\end{lemma}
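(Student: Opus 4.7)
The plan is to evaluate the integral in \eqref{eq:density_integral} directly by decomposing the domain of integration into regions where the valuations of $x_3$ and $x_1$ are fixed, and then to compare the result to the formula for $\omega_\p(S)$ in Theorem~\ref{thm:number_fields}.

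First, I would split the integral according to $k \coloneqq v_\p(x_3) \in \ZZ$. For $k \ne 0$, the quantity $\max\{|x_3^{-1}|_\p, |x_3|_\p, 1\}$ equals $\N\p^{|k|}$, and for $k = 0$ it equals $1$. The factor $|x_3|_\p^{-1}$ in front contributes $\N\p^k$. For fixed $x_3$ with $v_\p(x_3) = k$, set $A \coloneqq v_\p(ax_3^2) = v_\p(a) + 2k$ and decompose the $x_1$-integral according to $m \coloneqq v_\p(x_1)$. One distinguishes the \emph{generic} cases $m < A/2$ (where $|ax_3^2 - x_1^2|_\p = \N\p^{-2m}$) and $m > A/2$ (where $|ax_3^2 - x_1^2|_\p = \N\p^{-A}$), and the \emph{cancellation} case $2m = A$, which can occur only when $A$ is even.

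Next, I would evaluate the generic contribution by a standard bookkeeping of the three possible dominant terms in the denominator (either $|x_1|_\p$, the $x_3^{\pm 1}$-floor, or the quadratic form), which is a finite sum of convergent geometric series in $\N\p^{-1}$. For the cancellation contribution, which only occurs when $v_\p(a)$ is even, I would apply Lemma~\ref{lem:measure_squares} to integrate over the annuli $\{v_\p(a x_3^2 - x_1^2) \ge A + j\}$ for $j \ge 1$; this produces a series whose $j$-th term is $\eta(\p^{v_\p(a)+j};a)\cdot \N\p^{-j}$ (with suitable normalization).

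The final step is a case analysis matching the four cases in the definition of $r_a(\p)$:
\begin{itemize}
\item If $\p \nmid 2a$, Lemma~\ref{lem:eta_properties} gives $\eta(\p^j;a) = 1 + \legendre{a}{\p}$ for all $j \ge 1$, and the cancellation series collapses to a geometric sum producing the Legendre symbol in the first case of $r_a(\p)$.
\item If $v_\p(a)$ is odd, there is no cancellation case at all ($A$ is always odd), and only the generic part contributes; the truncation at $v_\p(a)$ yields the term $1-\N\p^{-\lfloor v_\p(a)/2\rfloor}(1+\N\p^{-1})$.
\item If $v_\p(a)$ is even and $\p\nmid 2$, Lemma~\ref{lem:eta_properties} gives $\eta(\p^j;a) = 1 + \legendre{a/\pi_\p^{v_\p(a)}}{\p}$ for $j > v_\p(a)$, and the cancellation series is again geometric.
\item If $v_\p(a)$ is even and $\p\mid 2$, the values $\eta(\p^j;a)$ stabilize only for $j > v_\p(4a) + 1$; one splits the series into a finite head (of length $v_\p(4)$) and a geometric tail, and the sum of the two is exactly the quantity $s_a(\p)$ defined in Theorem~\ref{thm:number_fields}.
\end{itemize}
After collecting terms and multiplying by $(1 - \N\p^{-1})^5$, the claimed formula for $\omega_\p(S)$ follows.

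The main obstacle is the final case $\p \mid 2$ with $v_\p(a)$ even, where the $\eta$-values do not have a closed form before $j > v_\p(4a)+1$. Here the careful identification of the partial sums with $s_a(\p)$ requires matching the weights $(1 - \N\p^{-1})\N\p^{-k}$ that arise naturally from integrating annuli against the explicit formula \eqref{eq:eta}, and using the stabilization statement from Lemma~\ref{lem:eta_properties} to control the tail. All other arithmetic cases reduce, after use of Hensel's lemma via Lemma~\ref{lem:eta_properties}, to bookkeeping of geometric series.
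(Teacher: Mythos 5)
Your proposal is correct and follows essentially the same route as the paper: decompose the integral \eqref{eq:density_integral} by the valuations of $x_1$ and $x_3$, handle the generic valuation ranges by summing geometric series, isolate the cancellation locus $2v_\p(x_1)=v_\p(ax_3^2)$ (nonempty only for even $v_\p(a)$, and relevant only where the quadratic form can dominate the max), evaluate it via Lemma~\ref{lem:measure_squares} to produce the $\eta$-series, and then match the four cases of $r_a(\p)$ using Lemma~\ref{lem:eta_properties}. The paper organizes the same computation as an explicit partition of $K_\p^2$ into six regions $V_1,\dots,V_6$, but the substance is identical.
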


\begin{proof}
  Let $q\coloneqq \N\p$. Let $\alpha=v_\p(x_1)$, $\beta=v_\p(x_3)$, and
  $\gamma=v_\p(a)$. In the case $2\alpha=2\beta+\gamma$, let
  $\kappa\coloneqq v_\p(a(x_3/x_1)^2-1) \in \ZZnn$. Then
  \begin{equation*}
    v_\p(ax_3^2-x_1^2) =
    \begin{cases}
      2\alpha, & 2\alpha<2\beta+\gamma,\\
      2\beta+\gamma, & 2\alpha>2\beta+\gamma,\\
      2\beta+\gamma+\kappa, & 2\alpha=2\beta+\gamma.
    \end{cases}
  \end{equation*}
  Now it is straightforward to compute the maximum $M$ in
  (\ref{eq:density_integral}) as
  \begin{equation*}
    M=
    \begin{cases}
      q^\beta, & \beta \ge 0,\ \alpha\ge -\beta/2,\\
      q^{-2\alpha}, & \alpha \le 0,\ \max\{\alpha-\gamma/2,2\alpha\} < \beta
      \le -2\alpha,\\
      q^{-\beta}, & -\gamma \le \beta \le 0,\ \alpha \ge \beta/2,\\
      q^{-(2\beta+\gamma)}, & \beta<-\gamma,\ \alpha>\beta+\gamma/2,\\
      q^{-(2\beta+\gamma+\kappa)},
      & \beta<-\gamma,\ \alpha=\beta+\gamma/2,\ 0 \le \kappa < -\beta-\gamma,\\
      q^{-\beta}, & \beta<-\gamma,\ \alpha=\beta+\gamma/2,\ \kappa \ge -\beta-\gamma,\\
    \end{cases}
  \end{equation*}
  The integration domain
  $K_\p^2=\{(x_1,x_3) \in K_\p^2 : v_\p(x_1)=\alpha,\ v_\p(x_3)=\beta\}$ in
  (\ref{eq:density_integral}) is the disjoint union of the sets
  \begin{align*}
    V_1&=\{(x_1,x_3) \in K_\p^2 : \beta > 0,\ \alpha \ge -\beta/2,\ (\alpha,\beta)\ne(0,0)\},\\
    V_2&=\{(x_1,x_3) \in K_\p^2 : -\gamma/2 \le \alpha<0,\ 2\alpha < \beta < -2\alpha\},\\
    V_3&=\{(x_1,x_3) \in K_\p^2 : \alpha<-\gamma/2,\ \alpha-\gamma/2 <\beta<-2\alpha\},\\
    V_4&=\{(x_1,x_3) \in K_\p^2 : -\gamma \le \beta \le 0,\ \alpha \ge \beta/2\},\\
    V_5&=\{(x_1,x_3) \in K_\p^2 : \beta<-\gamma,\ \alpha>\beta+\gamma/2\},\\
    V_6&=\{(x_1,x_3) \in K_\p^2 : 2\alpha=2\beta+\gamma,\ \beta<-\gamma\}.
  \end{align*}
  Therefore,
  \begin{equation*}
    \omega_{H,\p}(\tS(K_\p))=\omega_{\p,1}+\omega_{\p,2}+\omega_{\p,3}
    +\omega_{\p,4}+\omega_{\p,5}+\omega_{\p,6}
  \end{equation*}
  where $\omega_{\p,i}$ is (\ref{eq:density_integral}) integrated over $V_i$
  instead of $K_\p^2$.
  
  On $V_1$, we have $|x_3|_\p\cdot M=q^{-\beta}\cdot q^\beta=1$. Writing
  $\beta=2\beta'-1$ or $\beta=2\beta'$ with $\beta' \in \ZZp$, we have
  \begin{align*}
    \omega_{\p,1}
    &= \sum_{\beta' > 0} \ints{\alpha \ge -\beta'+1/2\\\beta = 2\beta'-1} \dd x_1\, \dd x_3
    + \ints{\alpha \ge -\beta'\\\beta = 2\beta'} \dd x_1\, \dd x_3\\
    &=\sum_{\beta' > 0} q^{\beta'-1}\cdot \left(1-\frac 1 q\right)q^{-2\beta'+1}
    +q^{\beta'}\cdot \left(1-\frac 1 q\right)q^{-2\beta'}\\
    &=\left(1-\frac 1 q\right)\sum_{\beta'>0}2q^{-\beta'}= \frac 2 q.
  \end{align*}  
  On $V_2$, we have $|x_3|_\p\cdot M=q^{-\beta}\cdot q^{-2\alpha}$. Writing
  $\alpha'=-\alpha$ and $\gamma'=\floor{\gamma/2}$ and using
  Lemma~\ref{lem:sum_kpk},
  \begin{align*}
    \omega_{\p,2}
    &= \ints{-\gamma/2 \le \alpha < 0\\2\alpha < \beta < -2\alpha}
    \frac{\dd x_1\,\dd x_3}{q^{-2\alpha-\beta}}
    =\sum_{-\gamma/2 \le \alpha < 0} \sum_{2\alpha < \beta < -2\alpha}
    q^{2\alpha+\beta} q^{-\alpha} q^{-\beta} \left(1-\frac 1 q\right)^2\\
    &= \left(1-\frac 1 q\right)^2 \sum_{0 < \alpha' \le \floor{\gamma/2}}
      \frac{4\alpha'-1}{q^{\alpha'}}=\frac{3}{q}+\frac{1}{q^2}
      -\frac{4\gamma'+3}{q^{\gamma'+1}} + \frac{4\gamma'-1}{q^{\gamma'+2}}.
  \end{align*}
  On $V_3$, we have $|x_3|_\p\cdot M=q^{-\beta}\cdot q^{-2\alpha}$. Writing
  $\alpha'=-\alpha$ and using Lemma~\ref{lem:sum_kpk},
  \begin{align*}
    \omega_{\p,3}
    &= \ints{\alpha<-\gamma/2\\\alpha-\gamma/2<\beta<-2\alpha}
    \frac{\dd x_1\,\dd x_3}{q^{-2\alpha-\beta}}
    =\left(1-\frac 1 q\right)^2\sum_{\alpha'>\floor{\gamma/2}}
    \frac{3\alpha'+\ceil{\gamma/2}-1}{q^{\alpha'}}\\
    &=\begin{cases}
      \frac{4\gamma'+2}{q^{\gamma'+1}}-\frac{4\gamma'-1}{q^{\gamma'+2}}, & \gamma=2\gamma'\text{ even},\\
      \frac{4\gamma'+3}{q^{\gamma'+1}}-\frac{4\gamma'}{q^{\gamma'+2}}, & \gamma=2\gamma'+1\text{ odd}.
    \end{cases}
  \end{align*}
  On $V_4$, we have $|x_3|_\p\cdot M=q^{-\beta}\cdot
  q^{-\beta}$. Writing $\beta'=-\beta$, we have
  \begin{align*}
    \omega_{\p,4}
    &= \ints{-\gamma \le \beta \le 0\\\alpha \ge \beta/2} \frac{\dd x_1\,\dd x_3}{q^{-2\beta}}\
    =\sum_{\beta'=0}^\gamma q^{-\ceil{\beta'/2}}\left(1-\frac 1 q\right)\\
    &=1+\frac 1 q-\frac{1}{q^{\floor{\gamma/2}+1}}-\frac{1}{q^{\floor{(\gamma-1)/2}+2}}\\
  \end{align*}
  On $V_5$, we have $|x_3|_\p \cdot M = q^{-\beta} \cdot
  q^{-(2\beta+\gamma)}$. Writing $\beta'=-\beta$, we have
  \begin{align*}
    \omega_{\p,5}
    &= \ints{\alpha > \beta+\gamma/2\\\beta<-\gamma} \frac{\dd x_1\,\dd x_3}{q^{-3\beta-\gamma}}
    =\sum_{\beta<-\gamma} q^{3\beta+\gamma} \left(1-\frac 1 q\right)
    q^{-\beta}
    \cdot q^{-\floor{\beta+\gamma/2}-1}\\
    &=\left(1-\frac 1 q\right)q^{\gamma-\floor{\gamma/2}-1}\sum_{\beta'>\gamma} q^{-\beta'}
      =q^{-\floor{\gamma/2}-2}.
  \end{align*}
  So far, this shows that
  \begin{equation*}
    \omega_{\p,1}+\omega_{\p,2}+\omega_{\p,3}+\omega_{\p,4}+\omega_{\p,5} = 1+\frac 6 q+\frac{1}{q^2}-
    \begin{cases}
      \frac{3}{q^{\gamma'+1}}-\frac{1}{q^{\gamma'+2}}, & \gamma=2\gamma'\text{ even},\\
      \frac{1}{q^{\gamma'+1}}+\frac{1}{q^{\gamma'+2}}, & \gamma=2\gamma'+1\text{ odd}.      
    \end{cases}
  \end{equation*}
  For odd $\gamma$, since $V_6=\emptyset$ and $\omega_{\p,6}=0$, this is
  $\omega_{H,\p}(\tS(K_\p))$, which agrees with
  Theorem~\ref{thm:number_fields}.

  Now assume that $\gamma=2\gamma'=v_\p(a) \ge 0$ is even.
  By the definition of $\kappa$, 
  \begin{equation*}
    |x_3|_\p\cdot M=q^{-\beta}\cdot q^{-\min\{2\beta+\gamma+\kappa,\beta\}} = q^{-\beta}\cdot
    \begin{cases}
      q^{-(2\beta+\gamma+\kappa)}, & 0 \le \kappa <-\beta-\gamma,\\
      q^{-\beta}, & \kappa \ge -\beta-\gamma.
    \end{cases}
  \end{equation*}
  Therefore, using Lemma~\ref{lem:measure_squares} in the second step,
  writing $\beta' = -\beta$ in the third step, and exchanging orders
  of summation in the fourth step,
  \begin{align*}
    \omega_{\p,6}
    &= \int_{\beta < -\gamma} \left(\sum_{0\le k < -\beta-\gamma}
      \ints{\alpha=\beta+\gamma'\\\kappa = k} \frac{\dd x_1}{q^{-(3\beta+\gamma+\kappa)}}
    +\ints{\alpha=\beta+\gamma'\\\kappa \ge -\beta-\gamma} \frac{\dd x_1}{q^{-2\beta}}\right)\dd x_3\\
    &= \sum_{\beta < -\gamma} \left(1-\frac 1 q\right)q^{-\beta}
      \Bigg(q^{3\beta+\gamma}\left(\left(1-\frac 1 q\right) q^{-(\beta+\gamma')} - \frac{\eta(\p^{\gamma+1};a)}{q^{\beta+\gamma'+1}}\right)\\
    &+ \sum_{0 < k < -\beta-\gamma} q^{3\beta+\gamma+k}\left(\frac{\eta(\p^{\gamma+k};a)}{q^{\beta+\gamma'+k}}
      -\frac{\eta(\p^{\gamma+k+1};a)}{q^{\beta+\gamma'+k+1}}\right)+q^{2\beta}\frac{\eta(\p^{\gamma+(-\beta-\gamma)};a)}{q^{\beta+\gamma'+(-\beta-\gamma)}}\Bigg) \\
    &=\left(1-\frac 1 q\right)^2\sum_{\beta'>\gamma} q^{-\beta'+\gamma'}\left(1+\sum_{0 < k \le \beta'-\gamma} \eta(\p^{\gamma+k};a)\right)\\
    &=\frac{1}{q^{\gamma'+1}}-\frac{1}{q^{\gamma'+2}}
      +\left(1-\frac 1 q\right)^2 q^{\gamma'}\sum_{k=1}^\infty
      \left(\eta(\p^{\gamma+k};a)\sum_{\beta'=\gamma+k}^\infty
      \frac{1}{q^{\beta'}}\right)\\
    &=\frac{1}{q^{\gamma'+1}}-\frac{1}{q^{\gamma'+2}}+\left(1-\frac 1 q\right)\sum_{k=1}^\infty
      \frac{\eta(\p^{\gamma+k};a)}{q^{\gamma'+k}}.
  \end{align*}
  For even $\gamma$, we have in total
  \begin{equation*}
    \omega_{H,\p}(\tS(K_\p)) =  \left(1+\frac 6 q+\frac{1}{q^2}\right)+\frac{1}{q^{\gamma'}}
    \left(-\frac{2}{q}+\left(1-\frac 1 q\right)\sum_{k=1}^\infty
      \frac{\eta(\p^{\gamma+k};a)}{q^k}\right),
  \end{equation*}
  which also agrees with Theorem~\ref{thm:number_fields}.
\end{proof}

\bibliographystyle{alpha}

\bibliography{manin_dp4_a1a3ns_nf}

\end{document}